\definecolor{darkblue}{rgb}{0.0,0,0.7} 
\newcommand{\darkblue}{\color{darkblue}} 
\definecolor{darkred}{rgb}{0.7,0,0} 
\newcommand{\emp}[1]{\emph{\darkblue #1}} 
\def\W{{\mathfrak{S}_{n+1}}}
\def\Bni{\mathcal{B}_{n+1}}
\def\eg{\mbox{\it eg}.}
\def\Pc{\mathcal{P}_c}
\def\Pol{{\sf{Pol}}}
\def\Pol{{\sf{Pol}}}
\def\Vert{{\sf{Vert}}}
\def\Wf{\mathcal{W}_f}
\def\bs{{\mathbf{s}}}
\def\ls{{\ell_{\mathcal{S}}}}
\newtheorem{thm}{Theorem}[section]
\newtheorem{prop}[thm]{Proposition}
\newtheorem{lem}[thm]{Lemma}
\newtheorem{cor}[thm]{Corollary}
\theoremstyle{remark}
\newtheorem*{nota}{Notation}
\theoremstyle{definition}
\newtheorem{defn}[thm]{Definition}
\newtheorem{rmq}[thm]{Remark}
\newtheorem{exple}[thm]{Example}
\newcommand{\ts}{\textsuperscript}
\title[Bases of Temperley-Lieb algebras]{Noncrossing partitions, fully commutative elements and bases of the Temperley-Lieb algebra}
\author{Thomas Gobet}
\address{TU Kaiserslautern, Fachbereich Mathematik, Postfach 3049, 67653 Kaiserslautern, Germany.}
\email{gobet@mathematik.uni-kl.de} 
\begin{document}
\maketitle
\begin{abstract}
We introduce a new basis of the Temperley-Lieb algebra. It is defined using a bijection between noncrossing partitions and fully commutative elements together with a basis introduced by Zinno, which is obtained by mapping the simple elements of the Birman-Ko-Lee braid monoid to the Temperley-Lieb algebra. The combinatorics of the new basis involve the Bruhat order restricted to noncrossing partitions. As an application we can derive properties of the coefficients of the base change matrix between Zinno's basis and the well-known diagram or Kazhdan-Lusztig basis of the Temperley-Lieb algebra. In particular, we give closed formulas for some of the coefficients of the expansion of an element of the diagram basis in the Zinno basis. 
\end{abstract}
\tableofcontents
\section{Introduction}
The Temperley-Lieb algebra $\mathrm{TL}_n$ (of type $A_n$) is an associative, unital $\mathbb{Z}[v,v^{-1}]$-algebra of dimension equal to the $(n+1)$\ts{th} Catalan number $C_{n+1}=\frac{1}{n+2}\binom{2(n+1)}{n+1}$. It is generated by $b_1,\dots, b_n$, with relations 
\begin{eqnarray*}
b_j b_i b_j&=&b_j~\text{if $|i-j|=1$},\\
b_i b_j&=&b_j b_i~\text{if $|i-j|>1$},\\
b_i^2&=&(v+v^{-1}) b_i.
\end{eqnarray*}
Alternatively, it can be viewed as a quotient algebra of the Iwahori-Hecke algebra $\mathcal{H}$ of type $A_n$. There is well-known diagrammatic version of $\mathrm{TL}_n$ which is due to Kauffman (see \cite{Kau}), where the $b_i$'s are represented by planar diagrams and multiplication is given by concatenation of diagrams. The corresponding diagram basis is indexed by fully commutative elements of the symmetric group. It is a monomial basis in the generators $b_1,\dots, b_n$, which is also the projection of the canonical Kazhdan-Lusztig basis for $\mathcal{H}$ (see \cite{KL}). The Kazhdan-Lusztig theory in $\mathrm{TL}_n$ is very simple, as reflected by the diagrammatic properties: for example, any product of the generators is proportional to an element of the basis. 

Other bases of $\mathrm{TL}_n$ are known. A particularly mysterious one is a basis introduced by Zinno in \cite{Z}. There is a multiplicative homomorphism from the braid group $\mathcal{B}_{n+1}$ on $n+1$ strands to the Temperley-Lieb algebra. The Zinno basis is obtained by mapping the so-called \emp{canonical factors} of the braid group to $\mathrm{TL}_n$ (via $\mathcal{H}$). The canonical factors are a set of distinguished elements of the Birman-Ko-Lee braid monoid (see \cite{BKL}), later generalized to the dual braid monoid by Bessis (see \cite{Dual}). The Birman-Ko-Lee or dual braid monoid embeds into the braid group, but is generated by a copy of the set of all the transpositions. The dual braid monoids are examples of \emp{Garside monoids} (see \cite{DP}, \cite{DDGKM}) and the more standard name for the canonical factors in that setting is the \emp{simple} elements or \emp{simples}. They can be seen as lifts of noncrossing partitions (viewed as elements of the symmetric group) to the braid group. The basis defined by Zinno is therefore naturally indexed by noncrossing partitions of $\mathfrak{S}_{n+1}$, which is another set enumerated by the Catalan number $C(n+1)$.

Zinno shows that the images of the simple elements in $\mathrm{TL}_n$ form a $\mathbb{Z}[v,v^{-1}]$-linear basis of it by defining a bijection between noncrossing partitions and fully commutative elements as well as a partial order on the set of simple elements. He then shows that there exists a matrix with respect to any linear extension of this partial order which is upper triangular with invertible coefficients on the diagonal, allowing one to pass from the diagram basis to the set of images of simple elements. Zinno's bijection is then read on the diagonal of the matrix. The bijection is given by an algorithm which extracts a subword of a specific braid word chosen to represent each simple element. The obtained subword is then (after surjection to the symmetric group) shown to be fully commutative. The approach is indirect and there is no description of the inverse bijection. 

In this paper, we reformulate Zinno's bijection in a simple way, allowing one to explicitly compute the inverse bijection. We then use this bijection to introduce a new basis of the Temperley-Lieb algebra. This basis will allow us to control a part of the base change matrix between the Zinno and diagram bases and find closed formulas for some of the coefficients of the matrix. Surprisingly, the new basis involves considering the Bruhat order on $\mathfrak{S}_{n+1}$ restricted to noncrossing partitions. Such an order is in fact stronger than the order defined by Zinno on noncrossing partitions to achieve triangularity. As a consequence, the new basis is an intermediate basis between the diagram and Zinno basis, and both base change matrices between them and the new basis are upper triangular with invertible coefficients on the diagonals if one orders the set of noncrossing partitions by any linear extension of the Bruhat order.\\    
~\\
\textbf{Acknowledgments}~The author thanks François Digne for reading preliminary versions of the paper and the referee for his careful reading of the manuscript and many helpful remarks and comments. 
  
\section{Bijections between fully commutative elements and noncrossing partitions}
\subsection{Fully commutative elements}\label{sec:fully}
Let $(\mathcal{W}, \mathcal{S})$ be a Coxeter system with length function $\ell_{\mathcal{S}}:\mathcal{W}\rightarrow \mathbb{Z}_{\geq 0}$.
\begin{defn}
An element $w\in\mathcal{W}$ is \emp{fully commutative} if one can pass from any reduced $\mathcal{S}$-decomposition of $w$ to any other by applying a sequence of relations of the form $st=ts$, where $s,t\in\mathcal{S}$.
\end{defn}
We denote by $\mathcal{W}_f$ the set of fully commutative elements of $\mathcal{W}$. For more on fully commutative elements see \cite{stem}. In this paper, the Coxeter systems considered are of type $A_n$. In that case there are many well-known equivalent characterizations of fully commutative elements. We list those we shall need in this paper below:
\begin{prop}\label{prop:fullya}
Let $(\mathcal{W},\mathcal{S})$ be of type $A_n$, with $\mathcal{W}\cong\mathfrak{S}_{n+1}$ and $\mathcal{S}:=\{s_i=(i,i+1)\}_{i=1}^n$. Let $w\in\mathcal{W}$. The following are equivalent:
\begin{enumerate}
\item The element $w$ is fully commutative,
\item If $s_{i_1}\cdots s_{i_k}$ is a reduced $\mathcal{S}$-decomposition of $w$, then for all $i=1,\dots,n$, the integer $n_i(w):=|\{j~|~i_j=i\}|$ is independent of the chosen reduced $\mathcal{S}$-decomposition,
\item The element $w$ has a reduced $\mathcal{S}$-decomposition of the form $$(s_{{i_{1}}} s_{{i_{1}-1}}\cdots s_{{j_{1}}})(s_{{i_{2}}} s_{{i_{2}-1}}\cdots s_{{j_{2}}})\cdots (s_{{i_\ell}} s_{{i_\ell-1}}\cdots s_{{j_\ell}})$$ where all the indices lie in $\{1, \dots, n\}$, $i_{1}<i_{2}<\dots <i_\ell$, $j_{1}<j_{2}<\dots <j_\ell$ and $j_m\leq i_m$ for all $m=1,\dots,\ell$,
\item If $s_{i_1}\cdots s_{i_k}$ is a reduced $\mathcal{S}$-decomposition of $w$ with $s_{i_j}=s_i=s_{i_d}$, $j<d$ and $s_{i_m}\neq s_i$ for all $j<m<d$, then $(s_{i_{j+1}},s_{i_{j+2}},\dots, s_{i_{d-1}})$ has exactly one entry equal to $s_{i+1}$ and exactly one entry equal to $s_{i-1}$.
\end{enumerate}
\end{prop}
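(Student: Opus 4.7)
The plan is to cycle through (1) $\Rightarrow$ (2) $\Rightarrow$ (4) and (4) $\Rightarrow$ (3) $\Rightarrow$ (1), establishing all four equivalences. The first implication is a one-line observation about commutation, (2) $\Rightarrow$ (1) follows from Matsumoto's theorem, and the remaining two steps are combinatorial: a local analysis of substrings between consecutive occurrences of a generator, followed by an inductive extraction of a canonical normal form.

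For (1) $\Rightarrow$ (2), a commutation $st\leftrightarrow ts$ preserves the multiset of letters in a reduced expression, so $n_i(w)$ is independent of the chosen decomposition. For (2) $\Rightarrow$ (1), I would invoke Matsumoto's theorem: any two reduced decompositions of $w$ are linked by a sequence of braid and commutation moves, and in type $A_n$ the only braid move is $s_i s_{i+1} s_i \leftrightarrow s_{i+1} s_i s_{i+1}$, which shifts $n_i$ by $\pm 1$ and $n_{i+1}$ by $\mp 1$ between consecutive expressions in the sequence. Under (2) no braid move can occur, so only commutations are used.

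For (2) $\Rightarrow$ (4), I would fix a reduced decomposition of $w$ and consider two consecutive occurrences $s_{i_j}=s_i=s_{i_d}$ with no $s_i$ in between. Let $a$ and $b$ count the $s_{i-1}$'s and $s_{i+1}$'s strictly between them; every other intermediate letter commutes with $s_i$. If $a=b=0$, commuting every intermediate letter past $s_i$ produces $\cdots s_i s_i \cdots$, contradicting reducedness. If exactly one of $a,b$ vanishes (say $a=0$, $b\geq 1$), commuting out the letters that commute with both $s_i$ and $s_{i+1}$ exposes a subword $s_i s_{i+1} \cdots s_{i+1} s_i$; a local analysis then extracts a pattern $s_i s_{i+1} s_i$, to which the braid move applies, producing a reduced expression with altered multiset and contradicting (2). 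A symmetric argument, iterating the same reasoning between two consecutive $s_{i-1}$'s (which by the same dichotomy must enclose an $s_i$ or $s_{i-2}$), rules out $a\geq 2$ or $b\geq 2$, leaving $a=b=1$.

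For (4) $\Rightarrow$ (3), I would induct on $\ell_{\mathcal{S}}(w)$: using (4), commutations rearrange any reduced expression so that it ends with a maximal strictly decreasing block $s_{i_\ell}s_{i_\ell-1}\cdots s_{j_\ell}$; the remaining prefix still satisfies (4), so induction produces the factorization, while the constraints $i_{m}<i_{m+1}$, $j_m<j_{m+1}$, $j_m\leq i_m$ follow from the maximality of each block together with reducedness. Finally, (3) $\Rightarrow$ (1) is direct: within each block the indices strictly decrease (no repetition), and the strict inequalities between successive blocks prevent any subword $s_p s_{p\pm 1} s_p$ from arising under commutations, ruling out any braid move. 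The principal obstacle is the case analysis in (2) $\Rightarrow$ (4), specifically showing that an unbalanced or inflated count can always be converted into an explicit braid pattern by commutations; this relies on a careful tracking of how intermediate letters interact with $s_i$ and $s_{i\pm 1}$ in the Coxeter graph of type $A_n$.
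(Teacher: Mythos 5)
Your proposal is correct in outline and, unlike the paper, self-contained: the paper's proof of this proposition is essentially a list of citations (the equivalence $1\Leftrightarrow 2$ is declared clear, the point being that in type $A_n$ the only braid relations of order $\geq 3$ have odd order and hence alter letter multiplicities; $1\Leftrightarrow 3$ is referred to Goodman--de la Harpe--Jones and to Jones's normal form; $1\Leftrightarrow 4$ to Zinno's Theorem 1). What you write is in effect a reconstruction of the arguments behind those references --- Matsumoto--Tits for $1\Leftrightarrow 2$, the analysis of letters between consecutive occurrences of a generator for $4$, and the greedy extraction of decreasing runs for $3$ --- which buys the reader a complete proof at the cost of the case analysis you yourself flag. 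Two spots deserve tightening. In $(2)\Rightarrow(4)$, ruling out $b\geq 2$ occurrences of $s_{i+1}$ between consecutive $s_i$'s should iterate \emph{upward}: between two consecutive $s_{i+1}$'s lying in that segment there is no $s_i$, so the same dichotomy forces an $s_{i+2}$, and so on until the path graph terminates at $s_n$, where the dichotomy produces a non-reduced word; your phrasing ``between two consecutive $s_{i-1}$'s'' points this induction the wrong way (the symmetric downward iteration handles $a\geq 2$). In $(4)\Rightarrow(3)$ the claim that commutations produce a maximal decreasing suffix block needs an explicit greedy step, and for $(3)\Rightarrow(1)$ the cleanest route is to verify that the word in $(3)$ satisfies $(4)$ --- the strict monotonicity of the $i_m$ and of the $j_m$ forces consecutive occurrences of $s_p$ to lie in adjacent blocks with exactly one $s_{p+1}$ and one $s_{p-1}$ between them --- so that no word in its commutation class contains a factor $s_p s_{p\pm 1} s_p$, whence the class is closed under all Matsumoto moves and $w$ is fully commutative. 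These are repairable details rather than gaps in the strategy.
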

\begin{proof}
The equivalence $1\Leftrightarrow 2$ is clear and true for any Coxeter system such that the only even entry of the Coxeter matrix is $2$. The last two conditions are often considered in the case of the so-called reduced words of the Temperley-Lieb algebra but the results still hold in the symmetric group and the proofs can easily be adapted: for the equivalence $1\Leftrightarrow 3$, see \cite[Section 2.8]{Goo}. The existence of canonical reduced $\mathcal{S}$-decompositions as in $3$ have been noticed by Jones in the Temperley-Lieb case in \cite[Section 3.5]{JonesSub} (see also \cite[Section 5.7]{KT}). For the equivalence $1\Leftrightarrow 4$, see \cite[Theorem 1]{Z}. 
\end{proof}
\begin{nota}
For $w\in\mathcal{W}_f$, we denote by $J_w$ the set $\{j_1,\dots, j_\ell\}$ from point $(3)$ of Proposition \ref{prop:fullya} and by $I_w$ the set $\{i_1,\dots, i_\ell\}$.
\end{nota}
Given a Coxeter system $(\mathcal{W}, \mathcal{S})$, recall that a simple transposition occurs in a reduced $\mathcal{S}$-decomposition of an element $w\in \mathcal{W}$ if and only if it occurs in any reduced $\mathcal{S}$-decomposition of $\mathcal{W}$. 

\begin{cor}\label{cor:caractij}
Let $(\mathcal{W}, \mathcal{S})$ be of type $A_n$. Let $w\in\mathcal{W}_f$, $i\in\{1,\dots, n\}$ such that $s_i$ occurs in any reduced $\mathcal{S}$-decomposition of $w$. Then $i\in I_w$ if and only if in any reduced $\mathcal{S}$-decomposition of $w$, there is no occurrence of $s_{i+1}$ before the first occurrence of $s_i$. Similarly $i\in J_w$ if and only if in any reduced $\mathcal{S}$-decomposition of $w$, there is no occurrence of $s_{i-1}$ after the last occurrence of $s_i$. 
\end{cor}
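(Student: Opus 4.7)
The plan is to reduce both equivalences to a direct inspection of the canonical reduced decomposition of Proposition \ref{prop:fullya}(3), via an invariance principle that comes from full commutativity. Specifically, I would first establish the following invariance lemma: for any two generators $s,t\in\mathcal{S}$ which do not commute, and for any $w\in\mathcal{W}_f$, the ordered subword of occurrences of $s$ and $t$ read left to right is independent of the chosen reduced $\mathcal{S}$-decomposition of $w$. Indeed, by definition of full commutativity any two reduced decompositions are connected by a sequence of elementary commutation moves $uv\leftrightarrow vu$ (with $uv=vu$); such a move can swap neither two copies of the same letter nor an $s$ past a $t$ (the latter since $s$ and $t$ do not commute), and so preserves the $(s,t)$-subword at each step. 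Specializing to $(s,t)=(s_i,s_{i+1})$ shows that the property ``no $s_{i+1}$ occurs before the first $s_i$'' holds in every reduced decomposition of $w$ as soon as it holds in one; similarly with $(s,t)=(s_{i-1},s_i)$ for the $J_w$-statement.

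It then suffices to verify each equivalence on the canonical decomposition $w=(s_{i_1}s_{i_1-1}\cdots s_{j_1})\cdots (s_{i_\ell}s_{i_\ell-1}\cdots s_{j_\ell})$. For the $I_w$ equivalence: if $i=i_m\in I_w$, then every earlier block $k<m$ only uses indices $\leq i_k\leq i-1$, and block $m$ begins with $s_i$, so no $s_{i+1}$ can appear before the first $s_i$. Conversely, assuming $s_i$ occurs in $w$ but $i\notin I_w$, pick $m$ minimal with $j_m\leq i\leq i_m$; then $i<i_m$, and block $m$ reads $s_{i_m}s_{i_m-1}\cdots s_{i+1}s_i\cdots s_{j_m}$, exhibiting an $s_{i+1}$ immediately before an $s_i$. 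The minimality of $m$ ensures that the $s_i$ in question is the first occurrence of $s_i$ in the entire canonical word, so the invariance lemma transports the conclusion to every reduced decomposition.

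The $J_w$ equivalence is handled by a mirror argument: if $i=j_m\in J_w$, block $m$ ends in $s_i$, and every later block $k>m$ satisfies $j_k>j_m=i$, hence uses only indices $\geq i+1$, so $s_{i-1}$ cannot appear after the last $s_i$; conversely, if $s_i$ occurs but $i\notin J_w$, taking $m$ maximal with $j_m\leq i\leq i_m$ gives $j_m<i$ and block $m$ contains the factor $s_is_{i-1}$, producing an $s_{i-1}$ right after the last $s_i$. The only point requiring care is checking in each converse direction that the exhibited $s_i$ really is the first (respectively last) occurrence of $s_i$ in the whole canonical word; this follows directly from the minimality (respectively maximality) of $m$ combined with the strict orderings $i_1<\cdots<i_\ell$ and $j_1<\cdots<j_\ell$. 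No step presents a genuine obstacle; the substantive ingredient is the invariance lemma, everything else being a direct inspection of the canonical form.
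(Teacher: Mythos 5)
Your proof is correct and follows essentially the same route as the paper: both reduce the statement to an inspection of the canonical decomposition of Proposition \ref{prop:fullya}(3) and use the fact that commutation moves cannot transport an $s_{i\pm1}$ past an $s_i$. Your write-up is somewhat more explicit than the paper's (the invariance of the $(s_i,s_{i+1})$-subword is isolated as a lemma, and the converse direction exhibits the offending letter via the minimal or maximal block index $m$ rather than merely asserting impossibility), but the underlying argument is the same.
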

\begin{proof}
If $i\in I_w$, then the claimed property holds in the canonical reduced $\mathcal{S}$-decomposition of point $(3)$ of Proposition \ref{prop:fullya}. Since one passes from any reduced expression to any other only by applying commutation relations and $s_{i+1}$ does not commute with $s_i$, an $s_{i+1}$ in a reduced expression therefore cannot be moved to the left of the first $s_i$ in a reduced decomposition. 

Conversely, if there is never an occurrence of $s_{i+1}$ before the first occurrence of $s_i$ in a reduced expression, in particular it holds for the canonical reduced decomposition of point $(3)$ of Proposition \ref{prop:fullya}, and this is possible only if $i\in I_w$. The proof of the second statement is similar. 
\end{proof}

\subsection{Noncrossing partitions and dual braid monoid}\label{sec:noncdual}
From now and unless otherwise specified, $(\mathcal{W},\mathcal{S})$ will be of type $A_n$, with the notations introduced in Proposition \ref{prop:fullya}. The \emp{support} of a permutation $w\in\W$ is the set of $i\in\{1,\dots,n+1\}$ such that $w(i)\neq i$. Let us point out that the material presented below can be generalized to arbitrary finite Coxeter systems, see \cite{Dual}. 

Let $\mathcal{T}$ be the set of transpositions of $\mathcal{W}$ and $\ell_{\mathcal{T}}:\mathcal{W}\rightarrow \mathbb{Z}_{\geq 0}$ be the transposition length. There is a partial order $<_{\mathcal{T}}$ on $\mathcal{W}$ defined by $u<_{\mathcal{T}} v$ if the equality 
$$\ell_{\mathcal{T}}(u)+\ell_{\mathcal{T}}(u^{-1} v)=\ell_{\mathcal{T}}(v)$$
is satisfied. Let $c$ be any standard Coxeter element, that is, any product of all the $s_i$ in some order. Such an element is an $(n+1)$-cycle, but any $(n+1)$-cycle is not a standard Coxeter element: see \cite[Section 7]{GobWil} for a characterization of those $(n+1)$-cycles which are standard Coxeter elements. One has $\mathcal{T}\subset\mathcal{P}_c$ (see \cite[Lemma 1.2.1]{Dual}). The restriction of $<_{\mathcal{T}}$ to $\mathcal{P}_c:=\{x\in\mathcal{W}~|~x<_{\mathcal{T}}c\}$ endows $\mathcal{P}_c$ with a lattice structure. The obtained lattice is isomorphic to the lattice of noncrossing partitions (for the "is finer than" order) as shown in \cite{BDM}. In case $c=s_1\cdots s_n=(1,2,\dots,n+1)$, one obtains the noncrossing partition corresponding to $x\in\mathcal{P}_c$ by looking at the decomposition of $x$ into a product of disjoint cycles (this approach provides a generalization of noncrossing partitions to arbitrary finite Coxeter groups; see \cite{kapi} and \cite{Dual}). 

It is well-known that that noncrossing partitions are enumerated by $C(n+1)=\frac{1}{n+2}\binom{2(n+1)}{n+1}$, the $(n+1)$\ts{st} Catalan number. Recall that there is a graphical representation of a noncrossing partition $u\in\mathcal{P}_c$ by a disjoint union of polygons having vertices in a set of $n+1$ points on a circle labelled with $1,2,\dots, n+1$ in clockwise order, as in Figure \ref{figure:nonc}. The support of each cycle occurring in a decomposition of $u$ as a product of disjoint cycles is mapped to the polygon with vertices the elements of the support (we will assume that an edge is a polygon). 

From now, we assume that $c=s_1s_2\cdots s_n$. We will identify a noncrossing partition with the corresponding permutation of $\mathcal{P}_c$.

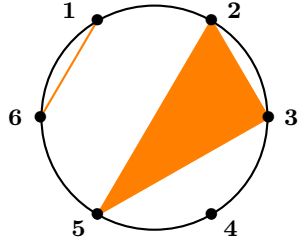
\begin{figure}[h!]
\begin{center}
\begin{tabular}{ccc}
& \begin{pspicture}(0,0)(6,3)
\pscircle(3,1.5){1.5}
\psdots(4.5,1.5)(3.75,2.79)(3.75,.21)(2.25,.21)(2.25,2.79)(1.5,1.5)
\pspolygon[linecolor=orange, fillstyle=solid, fillcolor=orange](3.75,2.79)(4.5,1.5)(2.25,.21)
\psline[linecolor=orange](2.25,2.79)(1.5,1.5)
\rput(3.75,.21){$\bullet$}
\rput(4.5,1.5){$\bullet$}
\rput(3.75,2.79){$\bullet$}
\rput(2.25,.21){$\bullet$}
\rput(2.25,2.79){$\bullet$}
\rput(1.5,1.5){$\bullet$}
\rput(4.05,2.92){\textrm{{\footnotesize \textbf{2}}}}
\rput(4.8,1.5){\textrm{{\footnotesize \textbf{3}}}}
\rput(4,.03){\textrm{{\footnotesize \textbf{4}}}}
\rput(2,.03){\textrm{{\footnotesize \textbf{5}}}}
\rput(1.17,1.5){\textrm{{\footnotesize \textbf{6}}}}
\rput(1.88,2.92){\textrm{{\footnotesize \textbf{1}}}}
\end{pspicture} & \\
\end{tabular}
\end{center}
\caption{Noncrossing partition corresponding to the permutation $x=(1,6)(2,3,5)\in\mathcal{P}_c$ for $c=s_1s_2s_3s_4s_5=(1,2,3,4,5,6)$ in type $A_5$.}
\label{figure:nonc}
\end{figure}

\begin{defn}[Bessis, \cite{Dual}] The \emp{dual braid monoid} associated to $(\mathcal{W},\mathcal{T},c)$ is generated by a copy $\{i_c(t)~|~t\in\mathcal{T}\}$ of $\mathcal{T}$ with relations $$i_c(t)i_c(t')=i_c(t')i_c(t'tt')~\text{if } tt'\in\mathcal{P}_c.$$ A relation as above is called a \emp{dual braid relation}. 
\end{defn}

The dual braid monoid is a generalization of the Birman-Ko-Lee monoid from \cite{BKL} which corresponds to a fixed choice of Coxeter element (see also \cite{BDM}). Bessis' definition works for arbitrary finite Coxeter systems. The terminology comes from the fact that there is an embedding $\iota_c:B_c^*\hookrightarrow \mathrm{Frac}(B_c^*)\cong \Bni$, where $\Bni$ is the braid group on $n+1$ strands. Recall that $\Bni$ is generated by a copy $\{\bs_i~|~s_i\in\mathcal{S}\}$ of the elements of $\mathcal{S}$ together with the relations
\begin{eqnarray*}
\bs_i\bs_{i+1}\bs_i&=&\bs_{i+1}\bs_i\bs_{i+1}, ~~\forall i\in\{1,\dots,n-1\}\\  \bs_i\bs_j&=&\bs_j\bs_i, ~~\text{if }|i-j|>1.
\end{eqnarray*} 
In the case where $c=s_1\cdots s_n$, the image $\iota_c(i_c(t))$ in $\Bni$ of $i_c(t)$ for $t=(i,k+1)$ with $k\geq i$ is represented by the braid word $$\bs_{i,k+1}:=\bs_k^{-1} \bs_{k-1}^{-1}\cdots \bs_{i+1}^{-1}\bs_i \bs_{i+1}\cdots \bs_k.$$

Moreover, the monoid $B_c^*$ shares many properties with the positive braid monoid $\Bni^+$ (that is, the monoid with the same generators and relations as $\Bni$); both turn out to be so-called \emp{Garside monoids} (see \cite{DP}), hence they embed into their group of fractions, which is in both cases isomorphic to $\Bni$ (for more on the general theory of Garside monoids we refer to \cite{DDGKM}). In particular as Garside monoid $B_c^*$ has a set of distinguished elements called the \emp{simple elements} or \emp{simples}. They are lifts of elements of $\mathcal{P}_c$ and can be defined combinatorially as follows:

\begin{defn} Let $x\in\mathcal{P}_c$ with a reduced $\mathcal{T}$-decomposition $t_1\cdots t_k$. We define the \emp{simple element} or \emp{simple} corresponding to $x$ and denoted by $i_c(x)$ as the product $$i_c(t_1)i_c(t_2)\cdots i_c(t_k).$$ Notice that this definition makes sense only if the product $i_c(t_1)i_c(t_2)\cdots i_c(t_k)$ is independent of the chosen reduced $\mathcal{T}$-decomposition. This holds for any $x\in\mathcal{P}_c$, as a consequence of the dual braid relations (see \cite[Section 1.6]{Dual}).
\end{defn}
 
\subsection{Bijections}\label{sec:bijn}
In this subsection, we introduce a bijection between the sets $\Wf$ and $\mathcal{P}_c$ from Subsections \ref{sec:fully} and \ref{sec:noncdual}. Let us begin with some notation:
\begin{nota}
Let $k\in\{1,\dots,n\}$. We denote by $\mathcal{I}_k$ the set of pairs $(X, Y)$ where $X=\{d_1, d_2,\dots, d_k\}$, $Y=\{e_1, e_2,\dots, e_k\}$, $e_i,d_i\in\{1,\dots,n+1\}$, $d_i<d_{i+1}$, $e_i<e_{i+1}$ for each $1\leq i<k$, $d_i< e_i$ for each $1\leq i\leq k$. Set $\mathcal{I}:=\coprod_{k=0}^n \mathcal{I}_k$. For $A\subset\mathbb{Z}$ and $n\in\mathbb{Z}$, we denote by $A[n]$ the set $\{a+n~|~a\in A\}$.
\end{nota}
Notice that the set $\mathcal{I}$ is item $107$ in \cite{Stanley}. In particular it is known that $|\mathcal{I}|=C(n+1)$. 
\begin{rmq}\label{rmq:wfi}
There is a bijection $\mathcal{W}_f\rightarrow \mathcal{I}$ given by $w\mapsto(J_w, I_w[1])$. This is just a reformulation of point $(3)$ of Proposition \ref{prop:fullya}.
\end{rmq}
\begin{nota}
Fix $x\in\mathcal{P}_c$ as in \ref{sec:noncdual}. We denote by $\Pol(x)$ the set of polygons of the graphical representation of $x$ described in Section \ref{sec:noncdual}.
\end{nota}

Let $P\in\Pol(x)$. Then $P$ is given by an ordered sequence of indices: $P=[i_1 i_2 \cdots i_k]$ where $i_1<i_2<\dots<i_k$ and $i_j$ are the vertices of $P$ (we identify the vertices with their labels). In the example of Figure \ref{figure:nonc} one has two polygons $P_1=[235]$ and $P_2=[16]$. For $m\in\{1,\dots, n+1\}$, we abuse notation and write $m\in P$ if there exists $1\leq j\leq k$ such that $m=i_j$.

\begin{defn} We say that $\min P:=i_1$ is an \emp{initial} index and $\max P:=i_k$ a \emp{terminal} index of $P$ or $x$. The \emp{longest edge} of $P$ is the edge joining $i_1$ to $i_k$. A vertex $m\in \{2,\dots,n\}$ is \emp{nested} in $P$ if there exists $1<j\leq k$ such that $i_{j-1}<m<i_j$. We say that $Q\in \Pol(x)$ is \emp{nested} in $P$ if any $m\in Q$ is nested in $P$. Note that since $x$ is a noncrossing partition, if one vertex of $Q$ is nested in $P$, then $Q$ must be nested in $P$.
\end{defn}

Let $\Pol(x)=\{P_1,\dots, P_r\}$. Any polygon $P_i=[i_1 i_2 \cdots i_k]\in\Pol(x)$ represents an element $y_i\in \mathcal{P}_c$. As an element of the symmetric group $y_i$ is $(i_1, i_2, \dots, i_k)$ and one has that $y_1 y_2\cdots y_r$ is the decomposition of $x$ into a product of disjoint cycles. Let $j<j'$. A reduced $\mathcal{S}$-decomposition for the transposition $(j, j')$ is given by the word $$[j, j']:=s_{j'-1} s_{j'-2}\cdots s_{j+1} s_j s_{j+1}\cdots s_{j'-2} s_{j'-1}.$$ Consider the word representing $y_i$ and obtained by the concatenation of such $[j,j']$'s $$m_i:=[i_1,i_2]\star[i_2,i_3]\star\cdots\star [i_{k-1},i_k].$$

\begin{lem}\label{stdreduced}
With the above notations, the concatenation $m_1\star m_2\star\cdots\star m_r$ is a reduced $\mathcal{S}$-decomposition of $x$. In particular one has $\sum_{i=1}^r \ell_{\mathcal{S}}(y_i)=\ell_{\mathcal{S}}(x)$.
\end{lem}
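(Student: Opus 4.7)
The plan is to verify the lemma in two stages. First I would check that each individual factor $m_i$ is a reduced $\mathcal{S}$-decomposition of $y_i$. Second, I would establish the length identity $\ell_{\mathcal{S}}(x) = \sum_{i=1}^r \ell_{\mathcal{S}}(y_i)$; since the concatenation $m_1\star\cdots\star m_r$ is visibly a word representing $x=y_1\cdots y_r$ in $\mathfrak{S}_{n+1}$ (the $y_i$ are disjoint cycles, so their product is independent of the order of multiplication), matching the total length is enough to conclude that the concatenation is reduced.

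For the first stage, the key observation is that $[j,j']$ is a standard reduced $\mathcal{S}$-expression for the transposition $(j,j')$, so $m_i$ multiplies to $(i_1,i_2)(i_2,i_3)\cdots(i_{k-1},i_k) = (i_1,\ldots,i_k) = y_i$. Its length as a word equals $\sum_{a=1}^{k-1}(2(i_{a+1}-i_a)-1) = 2(i_k-i_1) - (k-1)$, and a direct count of inversions for the cycle $y_i$ (classifying pairs $(a,b)$ with $a<b$ according to whether each endpoint lies in the support $\{i_1,\ldots,i_k\}$) gives $\ell_{\mathcal{S}}(y_i) = 2(i_k-i_1)-(k-1)$ as well, so $m_i$ is reduced.

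For the second stage I would count inversions of $x$ by classifying pairs $(a,b)$ with $a<b$ according to which polygons (if any) contain $a$ and $b$. The noncrossing hypothesis is crucial here: any two distinct polygons are either supported in disjoint intervals of $\{1,\ldots,n+1\}$, in which case they produce no inversions of $x$, or properly nested. A careful case analysis then shows that each polygon $P=[j_1,\ldots,j_p]$ contributes exactly $p-1$ inversions coming from pairs within its own support, and that each position $m\in(j_1,j_p)$ which is not itself a vertex of $P$ contributes exactly two further inversions to $\ell_{\mathcal{S}}(x)$, regardless of whether $m$ is fixed by $x$ or belongs to a polygon nested in $P$. Setting $N(P):=|\{m\in(j_1,j_p) : m\notin P\}|$, this yields $\ell_{\mathcal{S}}(x) = \sum_P\bigl((p-1)+2N(P)\bigr)$. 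On the other hand one has $N(P) = j_p-j_1-(p-1)$ by a direct count, so $(p-1)+2N(P) = 2(j_p-j_1)-(p-1) = \ell_{\mathcal{S}}(y_P)$, completing the proof of the length identity.

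The main obstacle is the inversion case analysis in the second stage. The bookkeeping splits into several sub-cases (both endpoints in the same polygon, endpoints in two distinct polygons, or one endpoint fixed by $x$), and the noncrossing hypothesis must be invoked precisely at the step where two polygons could in principle cross: it is this hypothesis that forces each nested polygon $P'$ to contribute exactly $2|P'|$ inversions against each of its enclosing polygons $P$, matching the contribution of the $|P'|$ vertices of $P'$ to $\sum_P N(P)$. Without noncrossing the identity $\ell_{\mathcal{S}}(x) = \sum_i \ell_{\mathcal{S}}(y_i)$ fails in general (e.g.\ for $x=(1,3)(2,4)\in\mathfrak{S}_4$), which is why a case analysis tailored to the polygon structure seems unavoidable.
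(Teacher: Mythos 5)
Your proposal is correct. Both you and the paper prove the lemma by counting inversions and exploiting the noncrossing property, but the arguments are organized differently: the paper inducts on $|\Pol(x)|$, peeling off an innermost polygon $P_m$ and showing that the inversion set of $x$ is the disjoint union of the inversion sets of $x'=y_1\cdots\hat{y}_m\cdots y_r$ and of $y_m$, whence $\ell_{\mathcal{S}}(x)=\ell_{\mathcal{S}}(x')+\ell_{\mathcal{S}}(y_m)$; you instead classify all inversion pairs of $x$ in one pass, attributing to each polygon $P=[j_1\cdots j_p]$ the $p-1$ inversions internal to its support plus two inversions for every position strictly inside its span that is not one of its vertices, which yields the closed formula $\ell_{\mathcal{S}}(x)=\sum_P\bigl(2(\max P-\min P)-(|P|-1)\bigr)$ term-by-term equal to $\sum_i\ell_{\mathcal{S}}(y_i)$. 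Your route avoids induction and makes the role of noncrossing more explicit (each vertex of a nested polygon contributes exactly two inversions against each enclosing polygon, matching the contribution of a fixed point in the same position), at the cost of a slightly heavier case analysis in a single step; the paper's induction localizes the case analysis to one innermost polygon at a time. The intermediate claims you state (the inversion count $\ell_{\mathcal{S}}(y_i)=2(i_k-i_1)-(k-1)$ for a single cycle, the vanishing of inversions between polygons with disjoint spans, and the count $N(P)=j_p-j_1-(p-1)$) all check out, so the sketch fills in to a complete proof.
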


\begin{proof}
The proof is by induction on $r=|\Pol(x)|$. Let $r=1$. Recall that the length $\ell_{\mathcal{S}}(\sigma)$ of a permutation $\sigma\in\mathfrak{S}_{n+1}$ is equal to the number of $i<j$ such that $\sigma(i)>\sigma(j)$ (see \cite[Proposition 1.5.2]{BjBr}). Hence the length of a transposition $(j,j')$ with $j<j'$ is equal to $2(j'-j)-1$. Since $i_1<i_2<\dots <i_k$ it follows that $\ell_{\mathcal{S}}(m_i)$ is equal to $\sum_{j=1}^{k-1} (2 (i_{j+1}-i_j)-1)$. But each word $[i_j,i_{j+1}] $, $j=1,\dots, k-1$ has exactly $2(i_{j+1}-i_j)-1$ letters, which concludes the proof in that case. 

Now assume that $r>1$. Consider a polygon $P_m=[i_1i_2\cdots i_k]\in\Pol(x)$ which has no other polygon nested in it. The corresponding cycle is $y_m=(i_1,i_2,\dots, i_k)$. Given any $P=[j_1j_2\cdots j_\ell]\in\Pol(x)$ with $P\neq P_m$, one has either $j_\ell < i_1$, or $i_k<j_1$, or there exists $p\in\{1,\dots, \ell-1\}$ such that $j_p<i_1$, $i_k< j_{p+1}$ (in this last case, $P_m$ is nested in $P$). 

Let $x'=y_1\cdots \hat{y_m}\cdots y_r$, where the hat denotes omission. Since graphically we just removed one polygon of $x$, we have $x'\in\mathcal{P}_c$. Let $i,j\in\{1,\dots, n+1\}$, $i<j$. We must find all such pairs of indices $i,j$ such that $x(i)>x(j)$. We treat different cases by comparing the values of $x$ and $x'$ on $i$ and $j$. Since $|\Pol(x')|=|\Pol(x)|-1$ we will then conclude by induction. 

If neither $i$ nor $j$ lies in $\{i_1,i_2,\dots,i_k\}$, then $x(i)>x(j)$ if and only if $x'(i)>x'(j)$ since in that case we have $x(i)=x'(i)$, $x(j)=x'(j)$. 

If $i\in\{i_1, i_2,\dots,i_k\}$ and $j>i_k$, then $x(j)=x'(j)$. Moreover, $x(j)$ stays outside the interval $\{i_1,i_1+1,\dots,i_k\}$ thanks to the noncrossing property while $x(i)$ and $x'(i)=i$ both stay in $\{i_1,i_2,\dots,i_k\}$. Hence once again one has that $x(i)>x(j)$ if and only if $x'(i)>x'(j)$, and similarly if $j\in\{i_1,i_2,\dots,i_k\}$ and $i<i_1$. 

Now if $i\in\{i_1,i_2,\dots,i_k\}$ and $j\in\{i_1, i_1+1,\dots, i_k\}$ (or vice-versa), then since no polygon is nested in $P_m$ we have that $x'(i)=i$ and $x'(j)=j$. We have $x(i)>x(j)$ if and only if $y_m(i)>y_m(j)$ since all indices in $\{i_1,i_1+1,\dots,i_k\}$ which are not vertices of $P_m$ are fixed by $x$ (because no polygon is nested in $P_m$). 

To summarize the pairs $i<j$ with $x(i)>x(j)$ are exactly those for which $x'(i)>x'(j)$ or $y_m(i)>y_m(j)$. Since the various cases treated above also show that these last two conditions cannot be realized simultaneously it follows that $$\ell_{\mathcal{S}}(x)=\ell_{\mathcal{S}}(x')+\ell_{\mathcal{S}}(y_m),$$
and by induction the claim follows. 

\end{proof}


\begin{nota} Write $\Vert(x)$ for the set of vertices of polygons of $x$ and set $$U_x:=\Vert(x)\setminus \{\mbox{initial vertices}\},~ D_x:=\Vert(x)\setminus \{\mbox{terminal vertices}\}.$$ Notice that $|D_x|=|U_x|$ and $(D_x, U_x)\in\mathcal{I}$.  
\end{nota}
\begin{exple}
In the example of Figure \ref{figure:nonc} we have $D_x=\{1, 2, 3\}$, $U_x=\{3,5,6\}$. The integer $4$ is nested in both $P_1=[235]$ and $P_2=[16]$. The integers $3$ and $5$ are nested in $P_2$ but not in $P_1$. The integer $6$ is not nested in any polygon of $x$.
\end{exple}
\begin{lem}\label{lem:ancrage1}
Let $(D, U)\in\mathcal{I}$ with $D\cap U=\emptyset$. There is a unique $x\in\mathcal{P}_c$ such that $(D_x, U_x)=(D, U)$.
\end{lem}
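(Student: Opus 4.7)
The plan is to observe first that the hypothesis $D\cap U=\emptyset$ forces a very restrictive shape on any candidate $x$, and then to realise the unique matching through the standard ``parenthesis'' argument.

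Step one: a structural observation on the polygons of $x$. By definition, a vertex of $x$ lies in $D_x\setminus U_x$ exactly when it is purely initial (the minimum of its polygon), in $U_x\setminus D_x$ exactly when it is purely terminal (the maximum of its polygon), and in $D_x\cap U_x$ exactly when it is an interior vertex of a polygon of size at least $3$. Hence $D_x\cap U_x=\emptyset$ is equivalent to saying that every polygon of $x$ has exactly two vertices, i.e.\ $x$ is a disjoint union of transpositions whose cycle graph is noncrossing. So if $(D_x,U_x)=(D,U)$, then $x$ is forced to be such a matching, with the elements of $D$ as smaller endpoints and the elements of $U$ as larger endpoints.

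Step two: uniqueness and existence of the matching. Label $D=\{d_1<\cdots<d_k\}$ and $U=\{e_1<\cdots<e_k\}$, and scan the disjoint union $D\sqcup U$ in increasing order of the values, marking elements of $D$ as ``openings'' and elements of $U$ as ``closings''. The condition $d_i<e_i$ for all $i$ means that at each prefix the number of openings encountered is at least the number of closings. This is precisely the Dyck/bracket condition, so there is a unique way of pairing openings with closings to obtain a noncrossing perfect matching: the standard stack algorithm pushes each $d\in D$ on a stack and, upon encountering $e\in U$, pairs $e$ with the topmost unpaired $d$. The well-definedness (the stack is never empty when we need to pop) is guaranteed exactly by the inequalities $d_i<e_i$, and noncrossingness is automatic for a matching produced by a stack.

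Step three: verification that this matching lives in $\mathcal{P}_c$ and realises $(D,U)$. A disjoint union of pairwise noncrossing edges is a noncrossing partition, hence it corresponds to a unique element $x\in\Pc$ under the identification from Section~\ref{sec:noncdual}. By construction each polygon of $x$ is an edge whose smaller vertex lies in $D$ and whose larger vertex lies in $U$, and every element of $D$ (resp.\ $U$) appears as such a smaller (resp.\ larger) endpoint, so $D_x=D$ and $U_x=U$. Uniqueness from Step~two then gives uniqueness of $x$.

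I do not expect a serious obstacle: the whole statement reduces, via the observation in Step~one, to the classical bijection between pairs $(D,U)$ satisfying the Dyck condition and noncrossing matchings on $D\sqcup U$. The only point that needs a line of care is the translation between the inequalities $d_i<e_i$ of the definition of $\mathcal{I}$ and the ``prefix'' inequality used in the stack argument, but since $D$ and $U$ are disjoint these are equivalent.
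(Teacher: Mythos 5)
Your proof is correct and follows essentially the same route as the paper: both reduce the statement, via the observation that $D\cap U=\emptyset$ forces every polygon of $x$ to be a single edge, to the classical fact that a pair $(D,U)$ satisfying the Dyck/prefix condition admits a unique noncrossing perfect matching. The only difference is packaging --- the paper runs an explicit induction that removes the largest element of $D$ together with its forced partner, while you invoke the left-to-right stack algorithm; these are two phrasings of the same bracket-matching argument.
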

\begin{proof}
Since $D\cap U=\emptyset$, the $x$ we have to find must be represented by a disjoint union of edges. Its set of initial indices must be equal to $D$ while its set of terminal indices must be equal to $U$. The proof is by induction on $|D|=|U|$.

If $D=U=\emptyset$, then the noncrossing partition $e$ is the unique one such that $(D_e, U_e)=(\emptyset, \emptyset)$. Now if $|D|=|U|>0$, consider the biggest index $d_j$ in $D$. It has to be joined to a unique $u_m\in U$ with $u_m>d_j$. To respect the noncrossing property $u_m$ must be the first index in $U$ appearing after $d_j$ when going along the circle in clockwise order. Indeed, assume that it is not the first one. Then there exists $u\in U$ which is nested in the edge $(d_j,u_m)$. The line containing the points $d_j$ and $u_m$ defines two half-planes $H_1$, $H_2$ and the point $u$ lies in one of them, say $H_1$. But $u$ must be joined to an index $d\in D$, $d\neq d_j$. These points lie in $H_2$ since they are before $d_j$ in clockwise order because $d_j$ is the biggest index in $D$. As a consequence the two segments $(d_j,u_m)$ and $(d,u)$ cross and the noncrossing property fails, a contradiction. 

Now if we consider $D'=D\backslash d_j$, $U'=U\backslash u_m$ and order them as $D'=\{d_1', \dots, d_{j-1}'\}$, $U'=\{u_1',\dots,u_{j-1}'\}$, $d_i'<d_{i+1}'$, $u_i'<u_{i+1}'$, we still have that $d_i'<u_i'$ since we removed the biggest index $d_j$ from $D$, hence $(D', U')\in\mathcal{I}$. By induction, there exists a unique $x'\in\Pc$ such that $(D_{x'}, U_{x'})=(D', U')$. The graphical representation of $x$ is obtained by adding the edge $(d_j, u_m)$ in the graphical representation of $x'$. The segment $(d_j,u_m)$ does not cross the segments coming from $x'$ since they all lie in the half-plane $H_2$ defined in the paragraph above. Hence the described process shows existence and uniqueness. 
\end{proof}
\begin{prop}\label{prop:terminalinitial}
The map $\varepsilon:\mathcal{P}_c\rightarrow\mathcal{I}$, $x\mapsto (D_x,U_x)$ is a bijection.
\end{prop}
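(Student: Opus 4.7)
My plan is to show $\varepsilon$ is injective and then conclude by the known cardinality identity $|\mathcal{P}_c|=|\mathcal{I}|=C(n+1)$. Lemma \ref{lem:ancrage1} already treats the case $D\cap U=\emptyset$; what remains is to accommodate the middle vertices, i.e.~the elements of $D\cap U$.

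First I would briefly check that $\varepsilon$ is well-defined. The equality $|D_x|=|U_x|$ is clear since both count the edges of the polygons of $x$, and the inequalities $d_i<u_i$ follow from a counting argument: the $i$ smallest elements $u_1<\cdots<u_i$ of $U_x$ each admit a distinct, strictly smaller polygon-predecessor in $D_x$ (the previous vertex in the same polygon, which is not terminal and hence lies in $D_x$), forcing at least $i$ elements of $D_x$ to lie strictly below $u_i$.

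For injectivity, suppose $\varepsilon(x)=\varepsilon(y)=(D,U)$. The role of each element of $\{1,\dots,n+1\}$ is entirely determined by $(D,U)$: singleton if outside $D\cup U$, initial if in $D\setminus U$, middle if in $D\cap U$, terminal if in $U\setminus D$. I would then argue that noncrossingness forces a unique reconstruction of the polygon decomposition of $x$ via a left-to-right scan through $1,\dots,n+1$ in which each initial vertex opens a new polygon, each middle vertex extends the innermost currently-open polygon, and each terminal vertex closes it. The central ingredient is the following noncrossing lemma: if $v$ is a vertex of a polygon $P$ and $Q\neq P$ is another polygon with $\min Q<v<\max Q$, then $P$ is nested in $Q$. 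Indeed, having $Q$ nested in $P$ would force $v$ strictly between two consecutive vertices of $P$, contradicting $v\in P$; the interleaving case is directly forbidden by noncrossingness; and external disjointness is incompatible with $\min Q<v<\max Q$. Consequently, at every step the polygon containing a middle or terminal vertex is necessarily the innermost one still open, so $x$ is uniquely recovered from $(D,U)$, and $x=y$.

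The main obstacle is the noncrossing lemma above: the geometric case analysis is what rules out any ambiguity in reading off the polygon decomposition from $(D,U)$. Once this is in place, injectivity follows directly from the inductive left-to-right reconstruction, and bijectivity is then immediate from the cardinality identity $|\mathcal{P}_c|=|\mathcal{I}|=C(n+1)$ noted at the outset.
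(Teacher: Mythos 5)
Your proof is correct, but it takes a genuinely different route from the paper's. The paper establishes existence and uniqueness of the preimage simultaneously, by induction on $|D\cap U|$: starting from the edges-only case of Lemma \ref{lem:ancrage1}, it removes an element $r\in D\cap U$, invokes the inductive hypothesis to obtain the unique $x'$ with $(D_{x'},U_{x'})=(D\backslash r, U\backslash r)$, shows that $r$ must be nested in some polygon of $x'$, and enlarges the innermost such polygon by the vertex $r$. You instead prove injectivity only --- via the stack/parenthesis-matching reconstruction in which every middle or terminal vertex is forced into the innermost currently open polygon, justified by your noncrossing lemma --- and then deduce surjectivity from the cardinality identity $|\mathcal{P}_c|=|\mathcal{I}|=C(n+1)$, both counts being recorded as known in the paper. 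Both arguments are sound; your injectivity argument is arguably more transparent, and you also supply a verification that $(D_x,U_x)$ actually lies in $\mathcal{I}$ (the inequalities $d_i<u_i$), which the paper only asserts in the preceding Notation. What the paper's approach buys is an explicit, constructive description of the inverse map $\mathcal{I}\rightarrow\mathcal{P}_c$, which the author points out right after the proof and which is what makes $\psi$ computable in the rest of the paper; your left-to-right scan would yield essentially the same algorithm, but to obtain surjectivity from it directly you would still need to check that the scan never gets stuck on an arbitrary $(D,U)\in\mathcal{I}$ (this is precisely where the condition $d_i<u_i$ would enter), a step you legitimately sidestep with the counting argument at the cost of a non-self-contained appeal to the enumeration of $\mathcal{I}$.
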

\begin{proof}
Let $(D,U)\in\mathcal{I}$. We need to show that there is a unique $x\in\mathcal{P}_c$ such that $(D,U)=(D_x,U_x)$. Set $I:=D\backslash(D\cap U)$ and $T:=U\backslash(D\cap U)$. Write $I=\{d_1, d_2,\dots, d_j\}$, $T=\{u_1,u_2,\dots, u_j\}$ where $d_i<d_{i+1}$, $u_i<u_{i+1}$. Notice that since $(D,U)\in\mathcal{I}$ it follows that $d_i<u_i$ if $1\leq i\leq j$. If there exists $x\in \mathcal{P}_c$ with $(D,U)=(D_x,U_x)$, then $I$ must be the set of initial indices and $T$ the set of terminal indices of $x$. In particular, $|I|=|T|=\Pol(x)$ and any of the $u_i$'s is joined to a unique $d_k$ such that $(u_i, d_k)$ is the longest edge of a polygon of $x$.

We now show by induction on $D\cap U$ that we can always find such an $x$ and that it is uniquely determined. The case where $D\cap U=\emptyset$ is given by Lemma \ref{lem:ancrage1}. Assume that $D\cap U\neq\emptyset$. Let $r\in D\cap U$ and consider the pair $(D\backslash r, U\backslash r)$. It lies in $\mathcal{I}$ again and one has $(D\backslash r)\cap(U\backslash r)=(D\cap U)\backslash r$. By induction, there exists a unique noncrossing partition $x'$ such that $(D_{x'}, U_{x'})=(D\backslash r, U\backslash r)$. We claim that $r$ is nested in at least one polygon of $x'$. Indeed, assume that $r$ is nested in no polygon of $x'$. Then for any $P\in\Pol(x')$, one has either $\mathrm{max} P<r$ or $\mathrm{min} P>r$. It follows that $|\{a\in D~|~a<r\}|=|\{a\in U~|~a<r\}|$. This implies that when writing $D=\{a_1 < a_2 < \dots < a_\ell\}$, $U=\{b_1 < b_2 < \dots < b_\ell\}$, there exists $1\leq j\leq\ell$ such that $r=a_j=d_j$, a contradiction to $(D,U)\in\mathcal{I}$. Hence the claim holds. 

We then enlarge the polygon of $x'$ which is the closest to $r$ among the ones in which $r$ is nested to obtain a noncrossing partition $x$ with $(D_x, U_x)=(D,U)$. It is the only polygon among the polygons of $x'$ in which $r$ is nested to which we can add the vertex $r$ and keep the noncrossing property. Since $x'$ is by induction the unique noncrossing partition such that $(D_{x'}, U_{x'})=(D\backslash r, U\backslash r)$, the uniqueness of $x$ follows.

\end{proof}
Notice that the proof above explains in particular how to obtain $x\in\Pc$ from the pair of sets $(D_x, U_x)$. By Proposition \ref{prop:terminalinitial} together with Remark \ref{rmq:wfi} we get:
\begin{thm}\label{thm:bijref}
There exists a bijection $\varphi:\mathcal{P}_c\rightarrow\mathcal{W}_f$ characterized by the equality
$$(J_{\varphi(x)}, I_{\varphi(x)})=(D_x, U_x[-1]),~\text{for all }x\in\mathcal{P}_c.$$
We therefore have a characterization of the inverse bijection $\psi:\mathcal{W}_f\rightarrow\mathcal{P}_c$ by the equality $$(D_{\psi(w)}, U_{\psi(w)})=(J_w, I_w[1]),~\text{for all }w\in\mathcal{W}_f.$$
\end{thm}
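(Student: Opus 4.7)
The plan is to realize $\varphi$ as the composition of two bijections already in hand. First, by Remark \ref{rmq:wfi}, the assignment $\alpha:\mathcal{W}_f\to\mathcal{I}$, $w\mapsto(J_w,I_w[1])$, is a bijection; as noted there, this is a direct transcription of point $(3)$ of Proposition \ref{prop:fullya}, which shows that a fully commutative element is uniquely determined by, and can be reconstructed from, the pair of indexing sets of its canonical reduced decomposition. Second, by Proposition \ref{prop:terminalinitial}, the assignment $\varepsilon:\mathcal{P}_c\to\mathcal{I}$, $x\mapsto(D_x,U_x)$, is also a bijection.

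I would then set $\varphi:=\alpha^{-1}\circ\varepsilon$. Being a composition of bijections, this is automatically a bijection $\mathcal{P}_c\to\mathcal{W}_f$. By construction, $\varphi(x)$ is the unique fully commutative element $w$ such that $(J_w,I_w[1])=(D_x,U_x)$; unwinding the shift yields exactly the required characterization $(J_{\varphi(x)},I_{\varphi(x)})=(D_x,U_x[-1])$. The inverse $\psi:=\varphi^{-1}=\varepsilon^{-1}\circ\alpha$ is then characterized by $(D_{\psi(w)},U_{\psi(w)})=(J_w,I_w[1])$ by the same reasoning.

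In short, the theorem is a formal corollary of the two preceding results and nothing substantial remains to verify. The real content lies in Proposition \ref{prop:terminalinitial} (itself resting on Lemma \ref{lem:ancrage1}), which recovers a noncrossing partition from a prescribed pair of initial/terminal vertex sets, and in point $(3)$ of Proposition \ref{prop:fullya}, which encodes a fully commutative element by the indexing sets of its canonical decomposition. The only bookkeeping one has to mind is the trivial but essential shift relating $I_w$ (indices in $\{1,\dots,n\}$) and $U_x$ (vertices in $\{1,\dots,n+1\}$), which is absorbed into the notation $U_x[-1]$; so there is no genuine obstacle in the argument.
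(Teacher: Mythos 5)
Your proposal is correct and is exactly the paper's argument: the theorem is stated there as an immediate consequence of Proposition \ref{prop:terminalinitial} and Remark \ref{rmq:wfi}, i.e.\ as the composition $\varphi=\alpha^{-1}\circ\varepsilon$ with the shift $U_x[-1]$ handled just as you describe. Nothing further is needed.
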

\begin{exple}\label{ex:gob}
For $x$ as in Figure \ref{figure:nonc} we have $(D_x, U_x)=(\{1,2,3\},\{3,5,6\})$. Hence $(J_{\varphi(x)}, I_{\varphi(x)})=(\{1,2,3\},\{2,4,5\})$. Writing $\varphi(x)$ as in point $(3)$ of Proposition \ref{prop:fullya} we therefore have $$\varphi(x)=(s_2 s_1)(s_4 s_3 s_2)(s_5 s_4 s_3).$$
\end{exple}
\section{Zinno basis of the Temperley-Lieb algebra}
The aim of this section is to introduce results by Zinno (see \cite{Z}) on the classical Temperley-Lieb algebra and explain the relation with the previously introduced bijections. We first introduce the Temperley-Lieb algebra and explain the link with the braid group. 
\subsection{Temperley-Lieb algebra and braid group} 

\begin{defn}\label{defn:templieb}
The \emp{Temperley-Lieb algebra} $\mathrm{TL}_n$ is the associative, unital $\mathbb{Z}[v, v^{-1}]$-algebra having as generators $b_1,\dots, b_n$ and relations
\begin{eqnarray*}
b_j b_i b_j&=&b_j~\text{if $|i-j|=1$},\\
b_i b_j&=&b_j b_i~\text{if $|i-j|>1$},\\
b_i^2&=&(v+v^{-1}) b_i.
\end{eqnarray*}
\end{defn}
\noindent It has a basis indexed by fully commutative elements:
  \begin{prop}[Jones, \cite{JonesSub}]\label{prop:jones} 
Let $w\in \mathcal{W}_f$. One associates to any reduced $\mathcal{S}$-decomposition $s_{i_1} s_{i_2}\cdots s_{i_k}$ of $w$ the element $b_{i_1} b_{i_2}\cdots b_{i_k}$ of $\mathrm{TL}_n$.
  \begin{enumerate}
  \item The product $b_{i_1} b_{i_2}\cdots b_{i_k}$ is independent of the choice of the reduced expression for $w$.
  \item The set $\{b_w\}_{w\in\mathcal{W}_f}$ is a $\mathbb{Z}[v,v^{-1}]$-basis of $\mathrm{TL}_n$.
  \item Given any sequence $j_1 j_2\cdots j_m$ of integers in $\{1,\dots,n\}$, there exists a unique pair $(x,k)\in\mathcal{W}_f\times\mathbb{Z}_{\geq 0}$ such that
  $$b_{j_1} b_{j_2}\cdots b_{j_m}=(v+v^{-1})^k b_x.$$
  \end{enumerate}
  \end{prop}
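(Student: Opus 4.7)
The plan is to first establish (1), then prove existence in (3) by induction, use (3) to obtain spanning in (2), invoke the Kauffman diagrammatic model to conclude linear independence, and finally derive uniqueness in (3).

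Statement (1) is immediate from full commutativity: any two reduced $\mathcal{S}$-decompositions of $w\in\mathcal{W}_f$ differ by a sequence of commutations $s_is_j = s_js_i$ with $|i-j|>1$, and these relations hold in $\mathrm{TL}_n$ by definition, so the associated products $b_{i_1}\cdots b_{i_k}$ agree.

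For existence in (3), I would induct on $m$. Writing $i := j_1$ and applying the inductive hypothesis to the tail $b_{j_2}\cdots b_{j_m} = (v+v^{-1})^k b_x$, the task reduces to showing $b_i b_x$ has the required form for $x \in \mathcal{W}_f$. If $\ell_{\mathcal{S}}(s_i x) < \ell_{\mathcal{S}}(x)$, then $x$ admits a reduced $\mathcal{S}$-decomposition beginning with $s_i$, so $b_x = b_i b_{s_ix}$ by (1), whence $b_ib_x = b_i^2 b_{s_ix} = (v+v^{-1})b_x$. If $\ell_{\mathcal{S}}(s_ix) > \ell_{\mathcal{S}}(x)$ and $s_ix \in \mathcal{W}_f$, the concatenation of $s_i$ with a reduced expression of $x$ is reduced for $s_ix$, and (1) gives $b_ib_x = b_{s_ix}$. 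The remaining subcase is $\ell_{\mathcal{S}}(s_ix) > \ell_{\mathcal{S}}(x)$ with $s_ix \notin \mathcal{W}_f$: characterization (4) of Proposition \ref{prop:fullya} then forces a failure of the ``one-$s_{i\pm 1}$-between-two-$s_i$'s'' condition on the prepended $s_i$ and the first occurrence of $s_i$ in $x$. After suitable commutations exploiting the canonical form of Proposition \ref{prop:fullya}(3), one can expose a contiguous $b_ib_{i\pm 1}b_i$ subword and apply $b_ib_{i\pm 1}b_i = b_i$ to obtain a strictly shorter product, to which induction applies.

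Spanning in (2) is then immediate, since every monomial reduces to $(v+v^{-1})^k b_w$ with $w \in \mathcal{W}_f$. For linear independence, I would invoke Kauffman's diagrammatic realization of $\mathrm{TL}_n$ (\cite{Kau}), which presents the algebra as spanned by $C_{n+1}$ linearly independent planar diagrams satisfying the defining relations; under the standard bijection between these diagrams and fully commutative elements, the $b_w$ map to distinct basis diagrams, forcing their linear independence in $\mathrm{TL}_n$. Uniqueness of $(x,k)$ in (3) then follows at once from the basis property together with the fact that $v+v^{-1}$ is not a zero divisor in $\mathbb{Z}[v,v^{-1}]$. The main difficulty lies in the non-fully-commutative subcase of the inductive step for (3): one must argue that after prepending $s_i$, the obstruction to full commutativity can be localized, via commutations alone, to a contiguous collapsible triple $b_ib_{i\pm 1}b_i$. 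The canonical factorization of Proposition \ref{prop:fullya}(3) makes this tractable, as the position of the first block of $x$ containing indices adjacent to $i$ pins down where the collapse must occur.
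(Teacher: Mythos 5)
The paper offers no proof of this proposition: it is stated as a known result of Jones with a citation to \cite{JonesSub}, so there is no internal argument to compare yours against. Taken on its own terms, your reconstruction is the standard one and is essentially sound: (1) is exactly as you say, spanning follows from existence in (3), linear independence is correctly delegated to Kauffman's diagram calculus \cite{Kau} (that is genuinely where the content lies --- one checks the defining relations hold among diagrams and that the $C_{n+1}$ elements $b_w$ land on distinct diagrams), and uniqueness in (3) follows since $v+v^{-1}$ is not a zero divisor. The one step that deserves care is your third subcase. As stated, characterization (4) of Proposition \ref{prop:fullya} quantifies over \emph{all} reduced decompositions, so from $s_ix\notin\mathcal{W}_f$ you only get a violating pair in \emph{some} reduced word, not necessarily in the specific word $s_i\cdot(\text{reduced word of }x)$ at the specific pair you name. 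The clean fix is to argue directly on that word: if $s_i$ does not occur in $x$ then $s_ix$ is fully commutative and you are in the second subcase; otherwise, count the occurrences of $s_{i+1}$ and of $s_{i-1}$ between the prepended $s_i$ and the first $s_i$ of $x$. Full commutativity of $x$ forces each count to be at most one (two consecutive $s_{i\pm1}$'s would need an $s_i$ between them), and $\ell_{\mathcal{S}}(s_ix)>\ell_{\mathcal{S}}(x)$ rules out both counts being zero. If both counts equal one, every consecutive pair in $s_i\cdot(\text{reduced word of }x)$ satisfies the between-condition, and since that condition is preserved by commutations and its validity precludes any braid move, $s_ix$ is fully commutative --- again the second subcase. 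If exactly one count equals one, every intervening letter other than that single $s_{i\pm1}$ commutes with $s_i$, so commutations expose the contiguous $b_ib_{i\pm1}b_i$ and the collapse you describe goes through, yielding a strictly shorter word to which induction applies. With that restructuring your proof is complete.
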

\begin{nota}
We denote by $b_w$ the element $b_{i_1} b_{i_2}\cdots b_{i_k}$ in point $(1)$ of Proposition \ref{prop:jones}.
\end{nota}
The basis $\{b_w\}_{w\in\mathcal{W}_f}$ has a well-known interpretation by planar diagrams (see \cite{Kau} and also \cite[Section 5.7.4]{KT} and the references therein). We write $\mathbb{Z}[v,v^{-1}] \Bni$ for the group algebra of $\Bni$ over $\mathbb{Z}[v,v^{-1}]$. Recall that we use bold notation $\bs_i$ for the braid group generator which is the lift of $s_i$. There are two quotient maps (see Remark \ref{rmq:two quotients} for references): 
\begin{eqnarray*}
\omega: \mathbb{Z}[v,v^{-1}] \Bni&\twoheadrightarrow& \mathrm{TL}_n\\
\bs_i\mapsto v^{-1}-b_i,\\
\omega': \mathbb{Z}[v,v^{-1}] \Bni&\twoheadrightarrow& \mathrm{TL}_n\\
\bs_i\mapsto b_i-v.
\end{eqnarray*}
\begin{rmq}\label{rmq:two quotients}
Both $\omega$ and $\omega'$ factor through the \emp{Iwahori-Hecke algebra} $\mathcal{H}$ of the symmetric group via the natural quotient map $\pi:\mathbb{Z}[v,v^{-1}] \Bni\twoheadrightarrow\mathcal{H}$, $\bs_i\mapsto v T_{s_i}$ where $T_{s_i}$ is the standard generator of $\mathcal{H}$ (see \eg \cite{KL} or \cite[Section 4.2.1]{KT}). There are then two ways to realize $\mathrm{TL}_n$ as a quotient of $\mathcal{H}$ by maps $\theta, \theta'$ defined by $\theta(T_{s_i})=v^{-2}-v^{-1} b_i$, $\theta'(T_{s_i})=v^{-1} b_i-1$ so that $\omega=\theta\circ\pi$, $\omega'=\theta'\circ\pi$ (see \eg \cite[Section 2.3 and Remark 2.4]{Ram}). From a representation theoretic point of view, it reflects the fact that in the semisimple case, the Temperley-Lieb quotient can be obtained from the Iwahori-Hecke algebra either by taking the quotient by the standard tableaux having more than two columns or by by taking the quotient by the standard tableaux having more than two rows (for the representation theoretic approach see \cite[Section 5]{KT}). The algebra $\mathcal{H}$ has two canonical Kazhdan-Lusztig bases $\{C_w\}_{w\in\mathcal{W}}$ and $\{C_w'\}_{w\in\mathcal{W}}$ as defined in \cite{KL}. One has the following projections of bases: $\theta(C_w)=(-1)^{\ell_{\mathcal{S}}(w)} b_w$ if $w\in\mathcal{W}_f$, while $\theta(C_w)=0$ if $w\notin\mathcal{W}_f$. Similarly, one has $\theta'(C_w')=b_w$ if $w\in\mathcal{W}_f$, while $\theta'(C_w')=0$ if $w\notin\mathcal{W}_f$ (see \cite{FG}).
\end{rmq}
\noindent The various facts mentioned above motivate the following definition:
\begin{defn} The basis $\{b_w\}_{w\in\mathcal{W}_f}$ is the \emp{diagram} or \emp{Kazhdan-Lusztig} basis of $\mathrm{TL}_n$.
\end{defn}

In this paper, we work with the quotient map $\omega$. That is, given a braid word, we will obtain its image in $\mathrm{TL}_n$ by replacing $\bs_i$ by $v^{-1}-b_i$ and $\bs_i^{-1}$ by $v-b_i$ (one has $(v^{-1}-b_i)(v-b_i)=1$ as a consequence of the last relation in Definition \ref{defn:templieb}). Of course, the results can be adapted if one prefers to use the quotient map $\omega'$.
\subsection{Zinno basis}\label{sub:zinnob}
In this section, we introduce terminology and work of Zinno \cite{Z} and then show that the bijection described in Theorem \ref{thm:bijref} and a bijection given by an algorithm in \cite{Z} are the same. 

The images $\iota_c(i_c(x))$, $x\in\mathcal{P}_c$ of the simple elements of $B_c^*$ in $\Bni$ introduced in Section \ref{sec:noncdual}, which can be considered as lifts of noncrossing partitions in the braid group, are called \emp{canonical factors} (shortly \emp{canfacs}) by Zinno; we will call them \emp{simples} even it they are viewed in $\Bni$ since this is a more standard name for them. In fact, the way Zinno writes the canfacs corresponds to the simple elements of $B_{c'}^*$ where $c'$ is the Coxeter element $c'=s_n\cdots s_2 s_1$. Since we are working with $c=s_1 s_2\cdots s_n$ we need to reverse the order of the braid words considered in \cite{Z}. 

Recall that for $t=(i,k+1)$, $k\geq i$, the image of $i_c(t)$ in $\Bni$ is given by the braid word $$\bs_{i,k+1}:=\bs_k^{-1} \bs_{k-1}^{-1}\cdots \bs_{i+1}^{-1}\bs_i \bs_{i+1}\cdots \bs_k.$$
We will abuse notation and also write $i_c(x)$ for the image of a simple element in the braid group which we previously denoted by $\iota_c(i_c(x))$ since it will make no possible confusion. A braid word such as $\bs_{i,k+1}$ is called a \emp{syllable} by Zinno. The braid group generator $\bs_i$ is the \emp{center} of the syllable, splitting the syllable into a \emp{left} part $\mathbf{s}_k^{-1} \mathbf{s}_{k-1}^{-1}\cdots \mathbf{s}_{i+1}^{-1}$ and a \emp{right} part $\mathbf{s}_{i+1}\cdots \mathbf{s}_k$. The letters $\mathbf{s}_k^{\pm 1}$ are at the \emp{top} of the syllable. A noncrossing partition $x\in\mathcal{P}_c$ which is a cycle, that is, which is represented by a single polygon is still called a \emp{cycle} in \cite{Z} after lifting in $\Bni$. Zinno uses the following braid word to represent $i_c(x)$: firstly write $x=(i_1,i_2,\dots, i_k)$, where $i_1<i_2<\dots<i_k$. We have $$x=(i_1, i_2)(i_2,i_3)\cdots (i_{k-1},i_k)$$ and $\ell_{\mathcal{T}}(x)=k-1$. Then the cycle $i_c(x)$ is represented by the braid word $$\mathbf{s}_{i_1,i_2} \mathbf{s}_{i_2,i_3}\cdots \mathbf{s}_{i_{k-1},i_k}.$$ Now if $x$ has possibly more than one cycle, we will represent $i_c(x)$ by the braid word obtained by concatenating the cycles, ordered by the maximal index in each cycle (that is, the terminal index of the associated polygon) in ascending order, and refer to such a word as to the \emp{standard form} of a simple element of the dual braid monoid. We denote the obtained braid word by $\mathbf{m}_x$. 
\begin{exple}
Let $x=(1,6)(2,3,5)$ as in Figure \ref{figure:nonc}. There are two polygons $P_1=[235]$ and $P_2=[16]$. They have as corresponding standard form $\mathbf{s}_{1,6}$ and $\mathbf{s}_{2,3}\mathbf{s}_{3,5}$. We have $P_1<P_2$ since the maximal index of $P_1$ is $5$ and that of $P_2$ is $6$. Hence
$$\mathbf{m}_x=\mathbf{s}_2 \mathbf{s}_4^{-1}\mathbf{s}_3\mathbf{s}_4\mathbf{s}_5^{-1}\mathbf{s}_4^{-1}\mathbf{s}_3^{-1}\mathbf{s}_2^{-1}\mathbf{s}_1\mathbf{s}_2\mathbf{s}_3\mathbf{s}_4\mathbf{s}_5.$$
\end{exple}
\begin{rmq}\label{rmq:centrebebe}
Notice that a braid group generator can be the center of at most one syllable, hence it occurs twice in any other syllable in which it occurs, once in the left part with negative exponent and once in the right part with positive exponent. The way the polygons (equivalently the cycles) are ordered implies that if $\mathbf{s}_i$ is the center of a syllable, then the first occurrence of $\mathbf{s}_i^{\pm 1}$ in $\mathbf{m}_x$ when reading the word from the left to the right is at the center of that syllable and hence with positive exponent. Note that by definition of $\mathbf{m}_x$, one has that $\mathbf{s}_i$ is the center of a syllable if and only if $i$ is a non terminal index of a polygon of $x$, that is, if and only if $i\in D_x$.
\end{rmq}
\begin{defn} If we replace each $\bs_i^{\pm 1}$ by $s_i$ in $\mathbf{m}_x$, then by Lemma \ref{stdreduced} we obtain a reduced $\mathcal{S}$-decomposition $m_x$ of $x\in\mathcal{P}_c$ which we also call the \emp{standard form} of $x$. We will also call $m_t$ for $t\in\mathcal{T}$ a \emp{syllable} with a \emp{center}, \emp{left part}, etc.
\end{defn}

A more general definition of this Coxeter word is given in \cite{GobWil} where we work with arbitrary standard Coxeter elements. It turns out that the Coxeter word $m_x$ plays an important role in the study of a basis of the Temperley-Lieb algebra discovered by Zinno (which we will introduce in a few lines) and its generalizations to arbitrary Coxeter elements. 

Let $x\in\mathcal{P}_c$. Set $Z_x:=\omega(i_c(x))$. Notice that $\mathcal{S}\subset\mathcal{P}_c$, whence $Z_s=\omega(\mathbf{s})=v^{-1}-b_i$ for any $s\in\mathcal{S}$.
\begin{thm}[{\cite[Theorem 2]{Z}}]\label{thm:z}
The set $\{Z_x\}_{x\in\mathcal{P}_c}$ is a $\mathbb{Z}[v,v^{-1}]$-linear basis of $\mathrm{TL}_n$. 
\end{thm}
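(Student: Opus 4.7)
Proof plan. By Proposition \ref{prop:jones}(2), $\{b_w\}_{w\in\Wf}$ is a $\mathbb{Z}[v,v^{-1}]$-basis of $\mathrm{TL}_n$, and by Theorem \ref{thm:bijref} the bijection $\varphi:\Pc\rightarrow\Wf$ lets us reindex it as $\{b_{\varphi(x)}\}_{x\in\Pc}$. Since $|\Pc|=\dim\mathrm{TL}_n=C(n+1)$, it suffices to show that the transition matrix $M=(c_{x,y})$ defined by $Z_x=\sum_{y\in\Pc}c_{x,y}b_{\varphi(y)}$ is upper triangular with invertible diagonal entries for \emph{some} partial order on $\Pc$. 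Invertibility of $M$ then forces $\{Z_x\}_{x\in\Pc}$ to be a basis.

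To compute $Z_x=\omega(\mathbf{m}_x)$, I would substitute $\omega(\bs_i^{\pm 1})=v^{\mp 1}-b_i$ into each letter of the standard form $\mathbf{m}_x$ and fully expand, producing a sum of monomials in the $b_i$'s indexed by subsets of the positions of $\mathbf{m}_x$. Each such monomial collapses by Proposition \ref{prop:jones}(3) to a scalar multiple of some $b_w$, $w\in\Wf$. The distinguished contribution to $b_{\varphi(x)}$ should come from the selection that picks $-b_j$ at all positions whose original exponent is positive (centers and right parts of syllables) and picks the scalar $v$ at the positions with negative exponent (left parts of syllables). By Remark \ref{rmq:centrebebe} combined with the characterization $(J_{\varphi(x)},I_{\varphi(x)})=(D_x,U_x[-1])$ of Theorem \ref{thm:bijref}, the resulting product of $b_i$'s should realize exactly the canonical decomposition of $\varphi(x)$ in Proposition \ref{prop:fullya}(3), so that after Jones reduction $b_{\varphi(x)}$ appears with coefficient a sign times a power of $v$, which is a unit of $\mathbb{Z}[v,v^{-1}]$.

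For the upper triangularity, I would order $\Pc$ by the restriction of the Bruhat order on $\W$ (as foreshadowed in the introduction of the present paper) and verify that every other summand arising in the expansion of $\omega(\mathbf{m}_x)$ collapses to $b_{\varphi(y)}$ with $y$ strictly below $x$ in that order. This is the technical heart of the argument: it requires carefully tracing, through the relations $b_i^2=(v+v^{-1})b_i$ and $b_jb_ib_j=b_j$, which fully commutative element each selection reduces to, and then pulling it back through $\psi=\varphi^{-1}$ to verify the comparison in $\Pc$. The combinatorial description of $\varphi$ in terms of the pairs $(D_x,U_x)$ developed in Section \ref{sec:bijn} is the natural tool for this control, since dropping or keeping a $b_i$ factor in the expansion translates directly into moving or deleting endpoints of polygons, and a clean inequality between the resulting $(D_y,U_y)$ and $(D_x,U_x)$ in the relevant order is what drives the triangularity.
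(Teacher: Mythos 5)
Your overall skeleton -- reindex the diagram basis via $\varphi$ and exhibit a triangular base change matrix with unit diagonal -- is exactly Zinno's strategy, which the paper cites rather than reproves (the surrounding text of Section 3.2 and Remark \ref{rmq:invertible} describe it). Two remarks on your choices: Zinno orders $\Pc$ simply by $\ell_{\mathcal{S}}$ (the braid word length of $\mathbf{m}_x$), which suffices and is weaker than the Bruhat order you propose; the Bruhat triangularity is true but is precisely the extra content the paper extracts later (Lemma \ref{lem:wx}, Proposition \ref{prop:refzin}), so using it here means proving more than you need. That part of your plan is legitimate, though the ``technical heart'' you defer (that every non-distinguished selection collapses to a strictly smaller term, and that the distinguished one is the \emph{unique} selection proportional to $b_{\varphi(x)}$) is the entire substance of Zinno's Theorems 3--6 and is not supplied.

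There is, however, a concrete error in the one step you do specify: the distinguished selection is not ``take $-b_j$ at every positive-exponent position and $v$ at every negative-exponent position.'' Zinno's subword $\mathbf{w}_x$ takes the \emph{negative}-exponent letter $\bs_i^{-1}$ from every syllable containing index $i$ whenever $i\notin D_x$ (i.e.\ $\bs_i$ is not a center), and only takes the positive occurrence when $i\in D_x$; this is why the diagonal coefficient in Lemma \ref{lem:coeffdiag} carries the factor $v^{-2k_w}$ with $k_w$ counting negative contributing letters. Test it on $x=(1,3)$, so $\mathbf{m}_x=\bs_2^{-1}\bs_1\bs_2$ and $(D_x,U_x)=(\{1\},\{3\})$, hence $\varphi(x)=s_2s_1$. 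Your all-positive selection yields $v\,b_1b_2=v\,b_{s_1s_2}$, which is \emph{not} $b_{\varphi(x)}$; the correct selection is $(-b_2)(-b_1)v^{-1}=v^{-1}b_{s_2s_1}$. The error is not cosmetic: expanding $Z_{(1,3)}$ gives $v^{-1}-b_{s_1}-b_{s_2}+v^{-1}b_{s_2s_1}+v\,b_{s_1s_2}$, and since $\psi(s_1s_2)=(1,2,3)<(1,3)$ while $\psi(s_2s_1)=(1,3)$, the matrix is triangular with $b_{s_2s_1}$ on the diagonal; with your identification the ``diagonal'' entry would sit at $b_{s_1s_2}$ and the term $b_{s_2s_1}$ would then correspond to $x$ itself, destroying strict triangularity. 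So the selection rule must be corrected to Zinno's before the rest of your plan can go through.
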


See also \cite{Lee}, where an alternative proof of this result is given. We follow \cite{Z} here, since it gives information on the relation with the basis $\{b_w\}_{w\in\Wf}$ which we will explore further. 
 
Zinno proves that there are total orders on $\{Z_x\}_{x\in\mathcal{P}_c}$ and $\{b_w\}_{w\in\mathcal{W}_f}$ such that there exists an upper triangular matrix with an explicitly computed invertible coefficient on the diagonal allowing one to pass from $\{b_w\}$ to $\{Z_x\}$. Since $\{b_w\}$ is a basis it follows that $\{Z_x\}$ is also a basis. He proceeds as follows: given $x\in\mathcal{P}_c$, Zinno considers the braid word $\mathbf{m}_x$ representing $i_c(x)$. He then extracts a subword (by subword we mean substring) $\mathbf{w}_x$ of $\mathbf{m}_x$ according to the following rules

\begin{itemize}
\item If a syllable has at least one letter indexed by $i$ (the letters indexed by $i$ are $\bs_i$ and $\bs_i^{-1}$), then that syllable must contribute to the subword exactly one of its letters indexed by $i$. In particular each center contributes since it is the only letter with its index in a syllable. 

\item If $\bs_i$ is the center of a syllable and occurs in another syllable, then such a syllable contributes the $\bs_i^{\pm 1}$ which has positive exponent. If $\bs_i$ is not the center of any syllable but there are syllables containing letters indexed by $i$, then these syllables must contribute their $\bs_i^{-1}$ to the subword. 
\end{itemize}

In this way we extract a subword $\mathbf{w}_x$. By replacing the $\bs_i^{\pm 1}$ by $s_i$ we get a Coxeter word $w_x$ for a permutation. Zinno then shows that $w_x$ is a reduced expression of a fully commutative element (we will therefore often abuse notation and identify $w_x$ with the fully commutative element it represents). Hence the map
$$a:\mathcal{P}_c\rightarrow \mathcal{W}_f,~x\mapsto w_x$$ is well-defined and Zinno shows that it is surjective. Since $|\mathcal{P}_c|=|\mathcal{W}_f|$ it follows that $a$ is bijective. 

\begin{rmq}
Thanks to remark \ref{rmq:centrebebe}, the rules given above are equivalent to the rules given by the following algorithm: read the word $\mathbf{m}_x$ from left to right. If the first letter $\bs_i^{\pm 1}$ occuring in $\mathbf{m}_x$ has positive (resp. negative) exponent, then all the occurrences of $\bs_i$ (resp. of $\bs_i^{- 1}$) in $\mathbf{m}_x$ and only those must contribute to the subword $\mathbf{w}_x$. Apply the same process to the next generator $\bs_j^{\pm 1}$, $j\neq i$ occuring to the right of the first $\bs_i^{\pm 1}$ in $\mathbf{m}_x$, until you have considered all the indices $k$ such that $\bs_k^{\pm 1}$ occurs in $\mathbf{m}_x$.
\end{rmq} 

An example of Zinno's algorithm to extract the fully commutative element $w_x$ as a subword of a standard form $\mathbf{m}_x$ of $i_c(x)$ for $x$ as in Figure \ref{figure:nonc} is given now.

\begin{exple}[Zinno's algorithm to extract $w_x=a(x)$ as a subword of $\mathbf{m}_x$]\label{ex:zinn}
~\\
Let $x=(2,3,5)(1,6)\in\mathcal{P}_{c}$.\\

~~~~~$\mathbf{m}_x=\bs_2(\bs_4^{-1} \bs_3 \bs_4)(\bs_5^{-1}\bs_4^{-1}\bs_3^{-1}\bs_2^{-1}\bs_1\bs_2\bs_3\bs_4\bs_5)$

~~~~~$\mathbf{m}_x={\underline{\bs_2}}(\bs_4^{-1} \bs_3 \bs_4)(\bs_5^{-1}\bs_4^{-1}\bs_3^{-1}\bs_2^{-1}\bs_1{\underline{\bs_2}}\bs_3\bs_4\bs_5)$

~~~~~$\mathbf{m}_x={\underline{\bs_2}}({\underline{\bs_4}^{-1}} \bs_3 \bs_4)(\bs_5^{-1}{\underline{\bs_4}^{-1}}\bs_3^{-1}\bs_2^{-1}\bs_1{\underline{\bs_2}}\bs_3\bs_4\bs_5)$

~~~~~$\mathbf{m}_x={\underline{\bs_2}}({\underline{\bs_4}^{-1}} {\underline{\bs_3}} \bs_4)(\bs_5^{-1}{\underline{\bs_4}^{-1}}\bs_3^{-1}\bs_2^{-1}\bs_1{\underline{\bs_2}}{\underline{\bs_3}}\bs_4\bs_5)$

~~~~~$\mathbf{m}_x={\underline{\bs_2}}({\underline{\bs_4}^{-1}} {\underline{\bs_3}} \bs_4)({\underline{\bs_5}^{-1}}{\underline{\bs_4}^{-1}}\bs_3^{-1}\bs_2^{-1}\bs_1{\underline{\bs_2}}{\underline{\bs_3}}\bs_4\bs_5)$

~~~~~$\mathbf{m}_x={\underline{\bs_2}}({\underline{\bs_4}^{-1}} {\underline{\bs_3}} \bs_4)({\underline{\bs_5}^{-1}}{\underline{\bs_4}^{-1}}\bs_3^{-1}\bs_2^{-1}{\underline{\bs_1}}{\underline{\bs_2}}{\underline{\bs_3}}\bs_4\bs_5)$\\

~~~~~$\rightsquigarrow \mathbf{w}_x={{\bs_2}}{{\bs_4}^{-1}}{{\bs_3}}{{\bs_5}^{-1}}{{\bs_4}^{-1}}{{\bs_1}}{{\bs_2}}{{\bs_3}}$\\

~~~~~$\rightsquigarrow w_x=s_2 s_4 s_3s_5s_4s_1s_2s_3={(s_2 s_1)(s_4 s_3 s_2)(s_5 s_4 s_3)}\in\mathcal{W}_f$.
\end{exple}
Notice that for $x=(1,3,5)(1,6)$, we have thanks to Examples \ref{ex:zinn} and \ref{ex:gob} that $a(x)=w_x=\varphi(x)$. This is a general fact: 
\begin{prop}\label{prop:lesmemes}
The bijection $\varphi:\mathcal{P}_c\rightarrow \mathcal{W}_f$ of Theorem \ref{thm:bijref} and the bijection described in \cite[Theorems 3 and 6]{Z} which we denoted by $a$ are the same.
\end{prop}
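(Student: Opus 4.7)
The plan is to prove $a = \varphi$ by showing that Zinno's element $w_x = a(x)$ satisfies the characterizing equality of Theorem \ref{thm:bijref}, namely $(J_{w_x}, I_{w_x}) = (D_x, U_x[-1])$. The key tool is Corollary \ref{cor:caractij}: $i \in J_{w_x}$ iff $s_{i-1}$ does not appear after the last $s_i$ in a reduced expression, with a symmetric criterion for $I_{w_x}$. Since $\mathbf{w}_x$ is by construction a reduced expression for $w_x$, these conditions can be tested on $\mathbf{w}_x$ directly.

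First I would make the extraction fully explicit. Using Remark \ref{rmq:centrebebe}, the letters contributing to $\mathbf{w}_x$ are: (i) for $i \in D_x$, every positive $\bs_i$ in $\mathbf{m}_x$, appearing at the center of its syllable $\bs_{i,k+1}$ in its polygon $P_i$ and in right parts of syllables $\bs_{\alpha,\beta}$ of other polygons $Q$ with $\alpha < i < \beta$; (ii) for $i \notin D_x$ with $\bs_i^{\pm 1}$ present in $\mathbf{m}_x$, every negative $\bs_i^{-1}$, appearing only in left parts of such syllables $\bs_{\alpha,\beta}$ with $\alpha < i < \beta$. By the noncrossing property, a syllable $\bs_{\alpha,\beta}$ of $Q$ satisfies $\alpha < i < \beta$ precisely when the polygon containing $i$ is nested in the gap of $Q$ between consecutive vertices $\alpha, \beta$.

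For $J_{w_x} = D_x$: fix $i \in D_x$ and let $Q^*$ be the polygon with largest terminal vertex among $\{P_i\}$ together with the polygons having a gap strictly containing $i$. Since polygons in $\mathbf{m}_x$ are ordered by terminal vertex and any such outer polygon $Q$ satisfies $\max Q > \max P_i$, the last positive $\bs_i$ in $\mathbf{w}_x$ sits in the relevant syllable of $Q^*$. After that position only indices $> i$ appear within the syllable and within subsequent syllables of $Q^*$; any polygon $R$ after $Q^*$ is either disjoint from $Q^*$ with $\min R > \max Q^* > i$ or would force $P_i$ to be nested in $R$, contradicting the maximality of $Q^*$. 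Hence $s_{i-1}$ does not occur after the last $s_i$, so $i \in J_{w_x}$. Conversely, if $i \notin D_x$ but $s_i$ appears in $w_x$, the last $\bs_i^{-1}$ lies in the left part of a syllable $\bs_{\alpha,\beta}$ of a similarly defined maximal nesting polygon. The first contributing letter following $\bs_i^{-1}$ in $\mathbf{w}_x$ is always some $s_{i-1}$: either the next $\bs_{i-1}^{-1}$ in the same left part (when $\alpha < i-1$ and $i-1 \notin D_x$), the center $\bs_{i-1} = \bs_\alpha$ of the syllable (when $\alpha = i-1$, which forces $i-1 \in D_x$), or the positive $\bs_{i-1}$ in the right part of the same syllable (when $\alpha < i-1$ and $i-1 \in D_x$). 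So $i \notin J_{w_x}$.

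The equality $I_{w_x} = U_x[-1]$ will follow from a mirror argument. Using Remark \ref{rmq:centrebebe}, the first $\bs_i^{\pm 1}$ in $\mathbf{w}_x$ sits at the center of its own syllable in $P_i$ when $i \in D_x$, and otherwise at the first $\bs_i^{-1}$ in a left part, coming from the innermost (smallest terminal vertex) nesting polygon. A parallel case analysis comparing the first occurrences of $\bs_j$ and $\bs_{j+1}$ will show that $j \in I_{w_x}$ iff $j+1$ is a non-initial vertex of some polygon, i.e.\ $j+1 \in U_x$. The main obstacle is the bookkeeping in these case analyses: correctly identifying the outermost (resp.\ innermost) nesting polygon and the relevant syllable within it, and cleanly handling the boundary subcases where $i \pm 1$ coincides with one of the endpoints $\alpha, \beta$ of the syllable in question.
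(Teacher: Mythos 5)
Your proposal is correct and follows essentially the same route as the paper: reduce via Theorem \ref{thm:bijref} to verifying $(J_{w_x},I_{w_x})=(D_x,U_x[-1])$, apply the first/last-occurrence criterion of Corollary \ref{cor:caractij}, and locate those occurrences in $\mathbf{m}_x$ using Remark \ref{rmq:centrebebe}, the ordering of polygons by terminal vertex, and the noncrossing property. The only difference is which half is written out in detail — you treat $J_{w_x}=D_x$ and defer $I_{w_x}=U_x[-1]$ to a mirror argument, while the paper does the reverse — and your detailed half checks out.
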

\begin{proof}
For $w\in\mathcal{W}_f$, we use the characterization of the sets $I_w$ and $J_w$ given in Corollary \ref{cor:caractij}.

Let $x\in \mathcal{P}_c$. By Theorem \ref{thm:bijref}, it suffices to show that $I_{w_x}[1]=U_x$ and $J_{w_x}= D_x$. We only show the first equality, the proof of the second one is similar. 

Let $i\in I_{w_x}$. By Corollary \ref{cor:caractij} there is no occurrence of $s_{i+1}$ before the first occurrence of $s_i$ in $w_x$. We claim that the first occurrence of $s_i$ in $w_x$ must come from a syllable $w$ of $\mathbf{w}_x$ whose first letter is $\bs_i^{\pm 1}$. Indeed, otherwise $\mathbf{s}_{i+1}^{-1}$ would occur in $w$ on the left of the $\mathbf{s}_i^{\pm 1}$ contributed and that $\mathbf{s}_{i+1}^{-1}$ would be contributed to $\mathbf{w}_x$ in case $\mathbf{s}_{i+1}$ is not a center. In case $\mathbf{s}_{i+1}$ is a center, the occurrence of $\mathbf{s}_{i+1}$ at the center must be the first in the word (by Remark \ref{rmq:centrebebe}), before $w$, and must be contributed. Hence the claim holds. But $\mathbf{s}_i^{\pm 1}$ is the first letter of a syllable if and only if $\mathbf{s}_i^{\pm 1}$ is at the top of that syllable, which holds if and only if $i+1\in U_x$. Hence $I_{w_x}[1]\subset U_x$. 

Conversely, let $i+1\in U_x$. Then $i+1$ is a vertex of a polygon $P\in\Pol(x)$ which is not initial. Write $(i_1, \dots, i_m)$, $i_1<i_2<\dots<i_m$ for the cycle corresponding to $P$. Let $i+1=i_\ell$, $1<\ell\leq m$. Then $\mathbf{s}_i^{\pm 1}$ is the first letter of the syllable $w=\mathbf{s}_{i_{\ell-1},i_\ell}$ of the cycle corresponding to $P$. We will show that this letter contributes to $\mathbf{w}_x$, that it is the first occurrence of $\mathbf{s}_i^{\pm 1}$ in $\mathbf{m}_x$ and that there is no occurrence of $\mathbf{s}_{i+1}^{\pm 1}$ in $\mathbf{m}_x$ at its left. These properties together imply that $i\in I_{w_x}$ by Corollary \ref{cor:caractij}. If there is another letter $\mathbf{s}_i^{\pm 1}$ before $w$, then it must be in a cycle corresponding to a polygon $Q\neq P$. Suppose that it occurs as a center of a syllable of the cycle corresponding to $Q$. This means that there exists $Q\in\Pol(x)$ with the vertex $i$ which is not terminal and $P\in\Pol(x)$ with the vertex $i+1$ which is not initial, contradicting the noncrossing property. If it is not a center, it cannot be at a top since $\mathbf{s}_i^{\pm 1}$ is already at the top of $w$ and there can be at most one syllable having it at its top. This implies that it has to be a letter of a syllable $\mathbf{s}_{k, k'}$ where $k<\min P$, $k'>\max P$ since the polygons are disjoint and noncrossing. If $Q$ is the polygon whose cycle has as its syllable $\mathbf{s}_{k,k'}$, we would have $\max Q>\max P$, hence $\mathbf{s}_{k, k'}$ would occur after $w$ in $\mathbf{m}_x$. Hence our $\mathbf{s}_i^{\pm 1}$ from $w$ is the first occurrence of $\mathbf{s}_i^{\pm 1}$ in $\mathbf{m}_x$. Now suppose $\mathbf{s}_{i+1}$ occurs in a syllable. If it is the center then it is in $P$ and the corresponding syllable appears just after $w$. If it is not the center, then to respect the noncrossing property one must again have that the syllable containing it appears after $w$. Therefore we have $i\in I_{w_x}$, hence $U_x\subset I_{w_x}[1]$.

\end{proof}

\section{A new basis of the Temperley-Lieb algebra}

Let us first recall some basic facts about the Bruhat order on a Coxeter group.
\subsection{Bruhat order}
We recall the definition and various characterizations of the Bruhat order on a Coxeter system $(\mathcal{W},\mathcal{S})$. For $w, w'\in\mathcal{W}$, we define a relation by $w\rightarrow w'$ if there exists $t\in\mathcal{T}$ such that $w'=tw$ and $\ell_{\mathcal{S}}(w)<\ell_{\mathcal{S}}(w')$. We then extend this relation to a partial order $<$ by setting $w< w'$ if there exist $w_1,\dots, w_k\in\mathcal{W}$ such that $$w\rightarrow w_1\rightarrow w_2\rightarrow\dots \rightarrow w_k\rightarrow w'.$$ It is the \emp{Bruhat order} of the Coxeter system $(\mathcal{W},\mathcal{S})$. The following characterization is classical (see \eg \cite[Corollary 2.2.3]{BjBr}). Recall that by \emp{subword} we mean substring: 
\begin{prop} 
For $w, w'\in\mathcal{W}$, the following are equivalent:
\begin{enumerate}
\item One has $w< w'$,
\item Any $\mathcal{S}$-reduced expression for $w'$ has a subword that is an $\mathcal{S}$-reduced expression for $w$,
\item There exists an $\mathcal{S}$-reduced expression for $w'$ which has a subword that is an $\mathcal{S}$-reduced expression for $w$.
\end{enumerate}
\end{prop}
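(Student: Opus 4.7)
The proof proceeds by establishing the cyclic implications $(2) \Rightarrow (3) \Rightarrow (1) \Rightarrow (2)$. The first implication is immediate, since $w'$ admits at least one reduced $\mathcal{S}$-expression.

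For $(3) \Rightarrow (1)$, I plan to induct on $k - m$, where $s_{i_1}\cdots s_{i_k}$ is the given reduced expression of $w'$ and $s_{i_{j_1}}\cdots s_{i_{j_m}}$ is the given reduced subword expression of $w$. If $k = m$, the two words coincide and $w = w'$. Otherwise pick any index $r \in \{1,\dots,k\} \setminus \{j_1,\dots,j_m\}$, and let $w''$ denote the element represented by the word $s_{i_1}\cdots \hat{s}_{i_r}\cdots s_{i_k}$ obtained by omitting the $r$-th letter. A standard consequence of the Strong Exchange Condition is that deleting a single letter from a reduced expression leaves a reduced expression, so $\ell_{\mathcal{S}}(w'') = k-1$. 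Writing $w' = t_r w''$ for the reflection $t_r = s_{i_1}\cdots s_{i_{r-1}} s_{i_r} s_{i_{r-1}}\cdots s_{i_1}$ exhibits $w'' \to w'$, hence $w'' \leq w'$. Since the omitted letter is not one of the $s_{i_{j_\bullet}}$, the subword for $w$ survives inside the new expression for $w''$, and the gap has dropped to $(k-1) - m$, so by induction $w \leq w''$; composing gives $w \leq w'$.

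For $(1) \Rightarrow (2)$, I would first handle the covering case $w \to w'$, so $w' = tw$ with $t \in \mathcal{T}$ and $\ell_{\mathcal{S}}(w) < \ell_{\mathcal{S}}(w')$. Given any reduced expression $s_{i_1}\cdots s_{i_k}$ of $w'$, we have $\ell_{\mathcal{S}}(tw') = \ell_{\mathcal{S}}(w) < k = \ell_{\mathcal{S}}(w')$, so the Strong Exchange Condition produces an index $r$ with $w = tw' = s_{i_1}\cdots \hat{s}_{i_r}\cdots s_{i_k}$; this subword has length $k-1 = \ell_{\mathcal{S}}(w)$ and is therefore reduced. For general $w \leq w'$, decompose the relation as a chain $w = w_0 \to w_1 \to \cdots \to w_\ell = w'$ and iterate: the covering case extracts a reduced subword for $w_{\ell-1}$ inside the original reduced expression of $w'$, then a reduced subword for $w_{\ell-2}$ inside that, and so on, producing a nested reduced subword for $w$ within the original expression. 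The principal technical input throughout is the Strong Exchange Condition, which I would cite from \cite{BjBr} rather than reprove; granted Strong Exchange, both directions are pure bookkeeping as described above.
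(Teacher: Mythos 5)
First, note that the paper does not actually prove this proposition: it is stated as a classical fact with a pointer to \cite[Corollary 2.2.3]{BjBr}, so there is no argument of the author's to compare yours against. That said, your proof of $(3)\Rightarrow(1)$ contains a genuine gap. The claim that ``deleting a single letter from a reduced expression leaves a reduced expression'' is false: in $\mathfrak{S}_3$ the word $s_1s_2s_1$ is reduced, but deleting the middle letter yields $s_1s_1$, which represents the identity and is not reduced. This is not cosmetic, because your induction applies the hypothesis to the pair $(w,w'')$ using the deleted word as a \emph{reduced} expression of $w''$; if that word is not reduced, condition $(3)$ is simply not verified for $(w,w'')$ and the induction cannot proceed. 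Worse, the intermediate assertion can be outright false: take $w'=s_1s_2s_1$ and $w=s_1$ realized as the first letter; deleting the letter in position $r=2$ leaves $s_1$ sitting inside the word $s_1s_1$ for $w''=e$, and $s_1\not\leq e$. The Strong Exchange Condition guarantees that \emph{some} letter (one attached to a suitable reflection) can be deleted without changing the element beyond multiplication by that reflection, not that an arbitrarily chosen letter can be deleted harmlessly. The standard proofs of this direction are genuinely more delicate: one inducts on $\ell_{\mathcal{S}}(w')$, peels off the first (or last) letter $s$ of the reduced expression, and splits into cases according to whether the distinguished subword uses that letter, invoking the lifting property of the Bruhat order in the case where it does.

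There is also a smaller, repairable issue in $(1)\Rightarrow(2)$: with the paper's definition of $w\rightarrow w'$, which requires only $\ell_{\mathcal{S}}(w)<\ell_{\mathcal{S}}(tw)$ and not an increase of length by exactly one, the word of length $k-1$ produced by Strong Exchange need not have length equal to $\ell_{\mathcal{S}}(w)$ and hence need not be reduced. This is easily fixed by iterating the Deletion Condition until a reduced word is reached, since a subword of a subword of a reduced expression of $w'$ is still a subword of that expression. Your implication $(2)\Rightarrow(3)$ is of course fine. In summary, the architecture is reasonable and $(1)\Rightarrow(2)$ can be salvaged, but $(3)\Rightarrow(1)$ rests on a false lemma and must be replaced by one of the standard arguments (or, as the paper does, simply cited from \cite{BjBr}).
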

\subsection{A new basis of the Temperley-Lieb algebra}
Let $(\mathcal{W},\mathcal{S})$ be of type $A_n$. Recall that we are working exclusively with the Coxeter element $c=s_1s_2\cdots s_n$. For $w\in\mathcal{W}_f$ we set $$L(w)=\{s\in \mathcal{S}~|~sw< w\},$$ $$R(w)=\{s\in \mathcal{S}~|~ws< w\}.$$

Note that these are just the left and right descent sets of $w$ viewed as permutation. 

\begin{rmq}\label{descents} Notice that if $s,t\in L(w)$, then $ts=st$. Indeed, if $s_i$, $s_{i+1}\in L(w)$, then $w$ has a reduced expression beginning with $s_i$ and another one beginning with $s_{i+1}$, hence cannot be fully commutative since one cannot move an $s_i$ after an $s_{i+1}$ only with commutation relations. The same holds if both $s,t$ lie in $R(w)$. Moreover, if $s\in L(w)$ (resp. $R(w)$), then $sw$ (resp. $ws$) lies again in $\mathcal{W}_f$ (see \cite[Proposition 2.4]{stem}). It follows together with point $(3)$ of Proposition \ref{prop:fullya} and Corollary \ref{cor:caractij} that $s_i\in L(w)$ if and only if $i\in I_w$ and $i-1\notin I_w$. 
\end{rmq}

Given a transposition $(i,j)\in\mathcal{T}$, assume that $P\in\Pol(x)$ has an edge joining point $i$ to point $j$. We will also denote this edge by $(i,j)$. Notice that if $(i,j)$ is an edge or a diagonal of a polygon of $x$, then $(i,j)<_{\mathcal{T}} x$ (see \cite[Proposition 2.6]{Brady}).

\begin{prop}\label{prop:configurations}
Let $w\in \mathcal{W}_f$, $s_i\in \mathcal{S}$. Then 
\begin{enumerate}
	\item $s_i\in L(w)$ if and only if $\{(i, i+1)$ is an edge of a polygon $P\in\Pol(\psi(w))$ with $i$ initial$\}$ or $\{$the point with index $i$ is not a vertex of a polygon of $\psi(w)$ but there exists a polygon $P\in\Pol(\psi(w))$ having an edge $(k, i+1)$ for some $k<i\}$.
	\item $s_i\in R(w)$ if and only if $\{(i, i+1)$ is an edge of a polygon $P\in\Pol(\psi(w))$ with $i+1$ terminal$\}$ or $\{$the point with index $i+1$ is not a vertex of a polygon of $\psi(w)$ but there exists a polygon $P\in\Pol(\psi(w))$ having an edge $(i, i+k)$ with $k>1\}$. 
\end{enumerate}
\end{prop}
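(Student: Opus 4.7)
The plan is to reduce the statement to conditions on $U_x$ and $D_x$ via Remark \ref{descents} and Theorem \ref{thm:bijref}. Writing $x=\psi(w)$, Remark \ref{descents} gives $s_i\in L(w)\iff i\in I_w$ and $i-1\notin I_w$, while Theorem \ref{thm:bijref} gives $I_w[1]=U_x$; hence
\[
s_i\in L(w)\ \Longleftrightarrow\ i+1\in U_x\ \text{and}\ i\notin U_x.
\]
By definition of $U_x$, the first condition means $i+1$ is a non-initial vertex of some polygon $P=[i_1,\ldots,i_k]$ of $x$ (necessarily unique since polygons are vertex-disjoint), say $i+1=i_j$ with $j\geq 2$; the second means $i$ is either not a vertex of any polygon, or is an initial vertex.

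For the forward direction of $(1)$, I distinguish these two ways $i\notin U_x$ can hold. If $i\notin\Vert(x)$, then the edge $(i_{j-1},i+1)$ of $P$ has $i_{j-1}<i$, and the second configuration occurs. If instead $i$ is an initial vertex of some polygon $Q$, I would first show $Q=P$: otherwise $P$ and $Q$ are disjoint, so $i+1\notin Q$, whence the next vertex $j'$ of $Q$ after $i$ satisfies $j'\geq i+2$; then the edges $(i,j')$ of $Q$ and $(i_{j-1},i+1)$ of $P$ satisfy $i_{j-1}<i<i+1<j'$, contradicting the noncrossing property. So $Q=P$ and $i=i_1$; since $i_2$ is the smallest vertex of $P$ strictly greater than $i_1$ and $i+1\in P$ with $i+1>i_1$, we get $i_2\leq i+1$, hence $i_2=i+1$. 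Thus $(i,i+1)$ is an edge of $P$ with $i$ initial, the first configuration.

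The backward direction of $(1)$ is immediate: in each configuration one reads off directly that $i+1$ is a non-initial vertex of $P$ and that $i$ is either not a vertex or is an initial vertex, so $i+1\in U_x$ and $i\notin U_x$. Part $(2)$ is entirely symmetric, using the analogue of Remark \ref{descents} for right descents (namely $s_i\in R(w)\iff i\in J_w$ and $i+1\notin J_w$) together with $J_w=D_x$, and swapping initial/terminal and the roles of $i$ and $i+1$ in the edge $(i,i+1)$. I expect the only delicate step to be the exclusion of the case $Q\neq P$ in the forward direction of $(1)$: this is precisely where the noncrossing property of $x$ is essentially used, via the interleaving inequality $i_{j-1}<i<i+1<j'$.
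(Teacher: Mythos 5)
Your proof is correct and follows essentially the same route as the paper's: reduce via Remark \ref{descents} and Theorem \ref{thm:bijref} to the condition $i+1\in U_{\psi(w)}$, $i\notin U_{\psi(w)}$, and then translate this into the two geometric configurations. The only difference is that the paper asserts the final equivalence without detail, whereas you spell out the case analysis (in particular the noncrossing argument forcing $Q=P$ when $i$ is an initial vertex), which is a welcome but not essentially different elaboration.
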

\begin{proof}
One has that $s_i\in L(w)$ if and only if $i\in I_w$, $i-1\notin I_w$ (see Remark \ref{descents}). This holds if and only if $i+1\in U_{\psi(w)}$, $i\notin U_{\psi(w)}$ if and only if $\{(i, i+1)$ is an edge of a polygon of $\psi(w)$ with $i$ initial$\}$ or $\{i$ is not a vertex of a polygon of $\psi(w)$ but there exists an edge $(k, i+1)$ of a polygon with $k<i\}$. One argues similarly for $s_i\in R(w)$. 
\end{proof}
\begin{cor}\label{cor:bruhat}
Let $w\in\mathcal{W}_f$.
\begin{enumerate}
	\item If $s\in L(w)$, then $s\psi(w)\in \mathcal{P}_c$ and $\ell_{\mathcal{S}}(s\psi(w))=\ell_{\mathcal{S}}(\psi(w))-1$.
	\item If $s\in R(w)$, then $\psi(w)s\in \mathcal{P}_c$ and $\ell_{\mathcal{S}}(\psi(w)s)=\ell_{\mathcal{S}}(\psi(w))-1$.
\end{enumerate}
\end{cor}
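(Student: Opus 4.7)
The plan is to use Proposition \ref{prop:configurations} to reduce to a case analysis on the graphical configuration of $\psi(w)$ near the indices $i, i+1$, where $s = s_i$, and then directly compute the cycle structure of $s_i \psi(w)$ (resp. $\psi(w) s_i$). I will focus on the first statement; the second follows by a symmetric argument exchanging the roles of initial/terminal and left/right.

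Assume $s_i \in L(w)$. Proposition \ref{prop:configurations} gives two configurations: (a) a polygon $P=[i,i+1,i_3,\dots,i_k]\in\Pol(\psi(w))$ with $i$ initial and edge $(i,i+1)$, or (b) the index $i$ is a singleton (not a vertex of any polygon) and some polygon $Q$ has an edge $(k,i+1)$ with $k<i$. In case (a), writing the cycle of $P$ as $(i,i+1,i_3,\dots,i_k)$ and multiplying by $s_i=(i,i+1)$ on the left, a direct computation shows that $i$ becomes fixed and the remaining vertices form the cycle $(i+1,i_3,\dots,i_k)$. Hence $s_i\psi(w)$ has the same polygons as $\psi(w)$ except that $P$ is replaced by $P'=[i+1,i_3,\dots,i_k]$ (and $P$ is removed if $k=2$). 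In case (b), writing the cycle of $Q$ around $k,i+1$ and conjugating the multiplication, one checks that the vertex $i$ gets inserted between $k$ and $i+1$, so that $Q=[\dots,k,i+1,\dots]$ is replaced by $Q'=[\dots,k,i,i+1,\dots]$.

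In both cases the resulting permutation is graphically a disjoint union of nested polygons, so it is a noncrossing partition and therefore lies in $\Pc$. To verify the length equality, I will apply Lemma \ref{stdreduced}, which expresses $\ell_{\mathcal{S}}$ as the sum over polygons of the contribution $\sum_{j}(2(i_{j+1}-i_j)-1)$. In case (a) the edge $(i,i+1)$ contributes exactly $1$ to $\ell_{\mathcal{S}}(\psi(w))$ and this contribution disappears in $\ell_{\mathcal{S}}(s_i\psi(w))$; in case (b), the single edge $(k,i+1)$ contributes $2(i+1-k)-1$ while the two new edges $(k,i)$ and $(i,i+1)$ contribute $(2(i-k)-1)+1=2(i-k)$, and the difference is again $-1$. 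In either case $\ell_{\mathcal{S}}(s_i\psi(w))=\ell_{\mathcal{S}}(\psi(w))-1$.

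The second statement for $s_i\in R(w)$ is completely analogous, using the other half of Proposition \ref{prop:configurations}: either $(i,i+1)$ is an edge of a polygon with $i+1$ terminal (right multiplication by $s_i$ then removes $i+1$ from the polygon), or $i+1$ is not a vertex and some polygon has an edge $(i,i+k)$ with $k>1$ (right multiplication inserts $i+1$ between $i$ and $i+k$). The noncrossing property and the length count proceed as above. The step I expect to be the only mildly delicate one is bookkeeping in case (b) to check that the resulting permutation really does remain a noncrossing partition, but since we are inserting (or removing) a single vertex into (from) an existing polygon without disturbing any other polygon, this is immediate from the definition of noncrossing.
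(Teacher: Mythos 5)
Your proposal is correct and follows essentially the same route as the paper: both reduce to the two configurations of Proposition \ref{prop:configurations}, observe that left (resp.\ right) multiplication by $s_i$ removes or inserts the single vertex $i$ (resp.\ $i+1$) in one polygon, and then deduce the length drop from the additivity of $\ell_{\mathcal{S}}$ over polygons given by Lemma \ref{stdreduced}. The only cosmetic difference is that you count edge contributions $2(i_{j+1}-i_j)-1$ directly, whereas the paper phrases the same count via the replacement of $[d_m,d_{m+1}]$ by $[d_m,i]\star[i,d_{m+1}]$ in the standard form.
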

\begin{proof}
By Proposition \ref{prop:configurations} we know what the assumption $s\in L(w)$ means in terms of the geometrical representation of $\psi(w)$ by a disjoint union of polygons. In case $(i,i+1)$ is an edge of a polygon $P$ of $\psi(w)$ with $i$ initial, it means that the cycle $y\in\mathcal{P}_c$ corresponding to $P$  is equal to $s_i y'$ where $y'\in\mathcal{P}_c$ is the cycle corresponding to the polygon $P$' obtained from $P$ by removing the vertex with index $i$. Since $i$ is the miminal index of $P$ one then has that $\ell_{\mathcal{S}}(y')=\ell_{\mathcal{S}}(y)-1$. But if a noncrossing partition $x\in\mathcal{P}_c$ has decomposition into disjoint cycles $y_1 y_2\cdots y_k$, one has by Lemma \ref{stdreduced} that $$\ell_{\mathcal{S}}(y)=\sum_{j=1}^k \ell_{\mathcal{S}}(y_j),$$
which concludes the proof of this case. In case $i$ is not an index of a vertex of a polygon of $\psi(w)$ but there is a polygon $P$ having an edge $(k, i+1)$ for $k<i$, consider again the cycle $y\in\mathcal{P}_c$ corresponding to $P$. The product $s_i y$ is again a noncrossing partitions corresponding to the polygon $P'$ obtained from $P$ by adding the vertex labelled by $i$. If the set of indices of vertices of $P$ is given by ${d_1,\dots, d_k}$, $d_j<d_{j+1}$ with $d_m=k, d_{m+1}=i+1$, an $\mathcal{S}$-reduced expression of $y$ is given by the concatenation $$[d_1,d_2]\star[d_2,d_3]\star\cdots\star [d_{k-1},d_k],$$
where $[j, \ell]=s_{\ell-1} s_{\ell-2}\cdots s_{j+1} s_j s_{j+1}\cdots s_{\ell-2} s_{\ell-1}$ (see Section \ref{sec:bijn}). Adding the vertex $i$ replaces in the product above the subword $[d_m,d_{m+1}]$ by the product $[d_m, i][i, d_{m+1}]$ and this just removes one occurrence of $s_i$. Hence we again have $\ell_{\mathcal{S}}(s_iy)=\ell_{\mathcal{S}}(y)-1$ and the same argument as for the first case gives the conclusion. The proof of the case where $s\in R(w)$ is similar.
\end{proof}
\begin{cor}\label{cor:bi}
Let $w\in\mathcal{W}_f$. The following are equivalent
\begin{enumerate} 
\item One has $s_i\in L(w)\cap R(w)$,
\item One has $s_i<_{\mathcal{T}}\psi(w)~\text{and}~s_i\psi(w)=\psi(w)s_i$,
\item There exists $P\in\Pol(\psi(w))$ which consists of the single edge $(i, i+1)$.
\end{enumerate}
\end{cor}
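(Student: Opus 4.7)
The plan is to prove the chain $(1)\Leftrightarrow (3)\Rightarrow (2)\Rightarrow (3)$, relying on Proposition \ref{prop:configurations} for the equivalence $(1)\Leftrightarrow(3)$ and on the arithmetic of disjoint cycle decompositions for the remaining implications.

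For $(1)\Leftrightarrow(3)$, I would exhaust the four combinations arising from the two alternatives for $s_i\in L(w)$ and the two for $s_i\in R(w)$ listed in Proposition \ref{prop:configurations}. Three of them are self-contradictory because they simultaneously require $i$ (resp.\ $i+1$) to be a vertex of some polygon of $\psi(w)$ and not to be one. The only consistent case is that $(i,i+1)$ is an edge of a polygon $P\in\Pol(\psi(w))$ with $i$ initial \emph{and} $i+1$ terminal, which forces $P$ to be exactly the single edge $[i,i+1]$. The converse direction is then an immediate verification from the same proposition.

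For $(3)\Rightarrow(2)$, if $[i,i+1]\in\Pol(\psi(w))$ then the $2$-cycle $s_i=(i,i+1)$ appears as a disjoint cycle in the factorization of $\psi(w)$, so $\psi(w)=s_i z$ with $z$ a product of cycles of supports disjoint from $\{i,i+1\}$. Hence $s_i$ and $z$ commute, yielding $s_i\psi(w)=\psi(w)s_i$, and additivity of $\ell_{\mathcal{T}}$ on the disjoint cycle decomposition gives $\ell_{\mathcal{T}}(\psi(w))=\ell_{\mathcal{T}}(s_i)+\ell_{\mathcal{T}}(z)$, i.e.\ $s_i<_{\mathcal{T}}\psi(w)$.

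The implication $(2)\Rightarrow(3)$ is the subtlest step, and the main obstacle of the proof. From $s_i<_{\mathcal{T}}\psi(w)$ I would first argue that $i$ and $i+1$ lie in a common cycle of $\psi(w)$, hence in a common polygon $P=[i_1\cdots i_k]$: this is forced by how left-multiplication of $\psi(w)$ by $s_i$ interacts with the disjoint cycle structure, since a transposition whose support lies inside one cycle splits that cycle and decreases $\ell_{\mathcal{T}}$ by one, while the hypothesis $\ell_{\mathcal{T}}(\psi(w))=\ell_{\mathcal{T}}(s_i)+\ell_{\mathcal{T}}(s_i\psi(w))$ only tolerates merge/extend situations. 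Let $y$ be the cycle corresponding to $P$; since the support of $s_i$ is disjoint from those of the remaining cycles of $\psi(w)$, the commutation $s_i\psi(w)=\psi(w)s_i$ reduces to $s_i y=y s_i$. Conjugation of $y=(i_1,\ldots,i_k)$ by $s_i$ swaps the two consecutive entries $i_a=i$ and $i_{a+1}=i+1$ in the cyclic sequence, producing the same cycle if and only if $k=2$. Hence $P=[i,i+1]$, which is exactly $(3)$.
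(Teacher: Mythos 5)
Your proposal is correct. For the equivalence $(1)\Leftrightarrow(3)$ it follows exactly the paper's route: the paper's entire proof consists of the observation that, of the four combinations of alternatives in Proposition \ref{prop:configurations}, only the one in which $(i,i+1)$ is an edge with $i$ initial and $i+1$ terminal is consistent, forcing the polygon to be the single edge $[i,i+1]$. Where you go beyond the paper is statement $(2)$: the paper dismisses its equivalence with $(3)$ as immediate, while you verify it via the disjoint cycle decomposition --- additivity of $\ell_{\mathcal{T}}$ over disjoint cycles for $(3)\Rightarrow(2)$, and for $(2)\Rightarrow(3)$ the fact that $s_i<_{\mathcal{T}}\psi(w)$ forces $i$ and $i+1$ into a common cycle (hence, being consecutive integers, into adjacent positions of the sorted vertex list), after which the commutation $s_iy=ys_i$ pins that cycle down to $(i,i+1)$. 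This extra detail is sound and worth having; the one blemish is the phrase ``only tolerates merge/extend situations'', which is stated backwards: the hypothesis $\ell_{\mathcal{T}}(s_i\psi(w))=\ell_{\mathcal{T}}(\psi(w))-1$ forces precisely the \emph{split} case, which is what you in fact use to conclude that $i$ and $i+1$ share a cycle, so the conclusion is unaffected.
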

\begin{proof}
It is an immediate consequence of Proposition \ref{prop:configurations}: $(i,i+1)$ must be a vertex of a polygon with both $i$ initial and $i+1$ terminal (the two other conditions together give a contradiction to the noncrossing property). 
\end{proof}
\begin{cor}\label{cor:bruhat2}
Let $w\in\mathcal{W}_f$. Let $s\in L(w)$, $t\in R(w)$, with $s\neq t$. Then $$s\psi(w)=\psi(w) t~\Leftrightarrow~s=s_j,~t=s_{j-1}~\text{for some index }j.$$
\end{cor}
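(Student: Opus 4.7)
The plan is to translate the identity $s\,\psi(w)=\psi(w)\,t$ into the conjugation relation $s=\psi(w)\,t\,\psi(w)^{-1}$ and then to exploit Proposition \ref{prop:configurations} systematically to constrain the geometry of $\psi(w)$. Writing $s=s_i$ and $t=s_k$, the relation becomes $\{\psi(w)(k),\psi(w)(k+1)\}=\{i,i+1\}$, which splits into the two cases (A) $\psi(w)(k)=i$ and $\psi(w)(k+1)=i+1$, and (B) $\psi(w)(k)=i+1$ and $\psi(w)(k+1)=i$.

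For the forward direction, I would rule out Case (A) entirely under the hypotheses. The sub-case $k=i$ gives $s=t$, contradicting the assumption. If $k=i-1$, the three vertices $i-1,i,i+1$ are consecutive in a single polygon (so $i$ is a non-initial vertex), and if $k\leq i-2$ or $k\geq i+1$, the elements $i$ and $i+1$ belong to two distinct polygons (so $(i,i+1)$ is not an edge of any polygon while $i$ is still a vertex). In all of these sub-cases Proposition \ref{prop:configurations} shows that neither (a) nor (b) can witness $s_i\in L(w)$, so Case (A) is excluded. In Case (B), the sub-case $k=i$ again yields $s=t$; the sub-cases $k\leq i-2$ and $k\geq i+1$ put $i$ in a polygon while separating it from $i+1$ (or produce the $2$-cycle $(i,i+2)$), each time contradicting $s_i\in L(w)$ via the same proposition. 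The only surviving possibility is $k=i-1$, yielding $s=s_j$ and $t=s_{j-1}$ with $j:=i$.

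For the backward direction, assume $s=s_j$ and $t=s_{j-1}$. Among the four combinations of options in Proposition \ref{prop:configurations} applied to $s_j\in L(w)$ and $s_{j-1}\in R(w)$, the mixed cases (a)$\cap$(b') and (b)$\cap$(a') are immediately incompatible because they disagree on whether $j$ is a vertex of $\psi(w)$, and (a)$\cap$(a') would force $j$ to be at once the initial vertex of the edge $(j,j+1)$ and the terminal vertex of the edge $(j-1,j)$, which cannot happen since $j$ belongs to at most one polygon. Hence (b)$\cap$(b') must hold: $j$ is not a vertex, there is an edge $(k',j+1)$ with $k'<j$ in some polygon $P$, and there is an edge $(j-1,m)$ with $m>j$ in some polygon $Q$. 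The noncrossing property together with the uniqueness of the polygon containing a given vertex forces $P=Q$ and pins down $(j-1,j+1)$ as an edge of $P$ with $j-1$ and $j+1$ consecutive vertices. This yields $\psi(w)(j-1)=j+1$ and $\psi(w)(j)=j$, from which a pointwise check on the inputs $j-1$, $j$, and $k\notin\{j-1,j\}$ confirms $s_j\,\psi(w)=\psi(w)\,s_{j-1}$.

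The main obstacle will be the forward direction, where one must enumerate carefully the cyclic configurations of $\psi(w)$ consistent with the conjugation relation and use Proposition \ref{prop:configurations} as a sieve to reject those incompatible with $s_i\in L(w)$; tracking the minimal and maximal vertices of the involved polygons, and checking when edges can or cannot exist between prescribed points, is the delicate bookkeeping step.
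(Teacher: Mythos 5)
Your argument is correct, but it follows a genuinely different route from the paper's. You rewrite $s\psi(w)=\psi(w)t$ as the conjugation condition $\{\psi(w)(k),\psi(w)(k+1)\}=\{i,i+1\}$ and then exhaust the possible relative positions of $k$ and $i$, using Proposition \ref{prop:configurations} as a sieve to kill every configuration except $\psi(w)(i-1)=i+1$, $\psi(w)(i)=i$. The paper instead argues operationally: by Proposition \ref{prop:configurations} and the proof of Corollary \ref{cor:bruhat}, left multiplication by $s_j\in L(w)$ toggles (adds or removes) the single vertex $j$ in the polygon picture of $\psi(w)$, except in the degenerate situation where it would also affect $j+1$, which forces $s_j\in R(w)$ and is ruled out via the commutation of right descents; right multiplication by $t=s_k\in R(w)$ likewise toggles $k$ or $k+1$, so the two sides can agree only if they toggle the same vertex, giving $t\in\{s_j,s_{j-1}\}$. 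Your version buys a fully explicit, pointwise-checkable case analysis (and the same mechanism handles the converse cleanly), at the price of more bookkeeping; the paper's is shorter but leans on the toggle interpretation established earlier. One sub-case of yours needs a different justification than the one you state: in your Case (A) with $k=i+1$, the values $\psi(w)(i+1)=i$ and $\psi(w)(i+2)=i+1$ force the polygon through $i$ to be the single edge $(i,i+1)$, hence $\psi(w)(i)=i+1=\psi(w)(i+2)$, contradicting injectivity --- so this sub-case is vacuous rather than excluded by $s_i\in L(w)$ (there, condition (a) of Proposition \ref{prop:configurations} would in fact be satisfied). This is a local patch and does not affect the soundness of the argument; your backward direction, reducing to the configuration where $j$ is not a vertex and $(j-1,j+1)$ is an edge, is the same as the paper's.
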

\begin{proof}
Let $s=s_j$, $t=s_k$ and suppose that $s\psi(w)=\psi(w)t$. Thanks to Proposition \ref{prop:configurations}, applying $s_j$ on the left of $\psi(w)$ either adds or removes the vertex with index $j$. It also possibly adds or removes the vertex with index $j+1$ but in that case, one would have $s\in R(w)$. Since $t\neq s$ the reflection $t$ would then remove a vertex with index $k$ distant from $j$ since any two reflections in $R(w)$ commute with each other, a contradiction to $s\psi(w)=\psi(w)t$ since the operation of $s$ in the left hand side does not change the vertex with index $k$. So we can suppose that $s$ removes or adds the vertex with index $j$, leaving all other vertices of the polygons unchanged. This means that $t$ also has to remove or add the vertex with index $j$. This is possible only if $t=s_{j-1}$ or $t=s_j$ but in the last case we have $s=t$ which is excluded. Conversely, the assumption implies by the above Proposition that $\psi(w)$ has a polygon $P$ having an edge $(j-1, j+1)$. We then have that $s_j\psi(w)=\psi(w) s_{j-1}$ and in the geometrical representation, it corresponds to adding the vertex with index $j$ to the polygon $P$.
\end{proof}
\begin{nota}
Let $w\in\mathcal{W}_f$, $L\subset L(w)$ and $R\subset R(w)$. We build new sets $L'$, $R'$ from $L$ and $R$ by doing the following: if $s\in L\cap R$, we either remove $s$ from $L$ or remove it from $R$. If $s_j\in L$ and $s_{j-1}\in R$, then we either remove $s_j$ from $L$ or remove $s_{j-1}$ from $R$. The process finishes when $L'\cap R'=\emptyset$ and $\{i~|~s_i\in L',~s_{i-1}\in R'\}=\emptyset$. At the end of the process we get two (non canonically defined) sets $L'\subset L$ and $R'\subset R$. It is clear that if $(L', R')$ and $(\tilde{L}', \tilde{R}')$ are two distinct sets with these properties, one has $|L'\cup R'|=|\tilde{L}'\cup\tilde{R}'|$. We call $(L',R')$ and \emp{ending pair} for $(L,R)$.
\end{nota}
\begin{exple}
Let $w=s_2 s_1 s_3$. Then $L(w)=\{s_2\}$, $R(w)=\{s_1, s_3\}$. Let $L=L(w)$, $R=R(w)$. One can choose $L'=\{s_2\}$, $R'=\{s_3\}$. Another possible choice is $L'=\emptyset$, $R'=\{s_1, s_3\}.$
\end{exple}
\noindent We set $$x_{L,R}:=\left(\prod_{s\in L'} s\right)\psi(w)\left(\prod_{s\in R'} s\right)\in \mathcal{W}.$$
Notice that such a notation makes sense only if $x_{L,R}$ is independent of the choice of $(L',R')$. This holds by the following Proposition, which is a generalization of Corollary \ref{cor:bruhat}:
\begin{prop}\label{prop:cube}
Let $w\in\mathcal{W}_f$, $L\subset L(w)$, $R\subset R(w)$. Then $$x_{L,R}:=\left(\prod_{s\in L'} s\right)\psi(w)\left(\prod_{s\in R'} s\right)$$ is independent of the choice of $L'$ and $R'$. Moreover, $x_{L,R}$ lies in $ \mathcal{P}_c$, $x_{L,R}< \psi(w)$
and $\ell_{\mathcal{S}}(x_{L,R})=\ell_{\mathcal{S}}(\psi(w))-|L'\cup R'|$.
\end{prop}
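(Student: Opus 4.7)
My plan is to split the proof into two parts: first establish well-definedness of $x_{L,R}$ (independence of the choice of ending pair), and then deduce the three remaining assertions by induction on $|L|+|R|$, reducing the inductive step to Corollary \ref{cor:bruhat} via a geometric descent-preservation argument.

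For well-definedness, I observe that any two ending pairs $(L_1', R_1')$ and $(L_2', R_2')$ of $(L,R)$ differ by finitely many elementary flips of a single choice: either a type-$(1)$ flip, toggling the side on which some $s\in L\cap R$ is retained, or a type-$(2)$ flip, which given a pair $s_j\in L,\ s_{j-1}\in R$, toggles which of the two is kept. It therefore suffices to show that a single flip preserves $x_{L,R}$. Since the elements of $L$ (resp.\ $R$) pairwise commute by Remark \ref{descents}, any individual simple reflection in $\prod_{s\in L'}s\cdot\psi(w)\cdot\prod_{s\in R'}s$ can be moved next to $\psi(w)$ by commutation. A type-$(1)$ flip then reduces to the identity $s\psi(w)=\psi(w)s$, which is Corollary \ref{cor:bi}; a type-$(2)$ flip reduces to $s_j\psi(w)=\psi(w)s_{j-1}$, which is Corollary \ref{cor:bruhat2}.

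For the remaining assertions I induct on $|L|+|R|$; the base case $L=R=\emptyset$ is trivial. In the inductive step, if $(L,R)$ contains a conflict (either an $s\in L\cap R$, or a pair $s_j\in L,\ s_{j-1}\in R$), I use independence to pick an ending pair retaining $s$ (resp.\ $s_j$) on the left. The same commutativity/corollary arguments of the first part then show $x_{L,R}=x_{L,R\setminus\{s\}}$ (resp.\ $x_{L,R\setminus\{s_{j-1}\}}$) with unchanged $|L'\cup R'|$, and I apply the inductive hypothesis to the smaller pair. If $(L,R)$ has no conflicts, then $(L',R')=(L,R)$ and $L\cap R=\emptyset$; pick any $s=s_a\in L\cup R$, say $s\in L$ (the case $s\in R$ is symmetric). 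Applying the inductive hypothesis to $(L\setminus\{s\},R)$, which again has no conflicts, gives $x_{L\setminus\{s\},R}\in\mathcal{P}_c$ with length $\ell_{\mathcal{S}}(\psi(w))-|L\cup R|+1$ and Bruhat-below $\psi(w)$. Since $x_{L,R}=s\cdot x_{L\setminus\{s\},R}$, the conclusion follows from one further application of Corollary \ref{cor:bruhat}, provided $s_a$ is a left descent of $\varphi(x_{L\setminus\{s\},R})$.

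Verifying this last descent condition is the main technical obstacle. I use the geometric characterization from Proposition \ref{prop:configurations}: each reflection in $L\cup R$ modifies a single vertex of the noncrossing-partition picture, namely vertex $a_i$ for $s_{a_i}\in L$ and vertex $b_j+1$ for $s_{b_j}\in R$. Commutativity of elements within $L(w)$ forces $|a_i-a|\geq 2$ for $a_i\neq a$, and the no-conflict assumption rules out $b_j+1\in\{a,a+1\}$; hence vertices $a$ and $a+1$ are untouched by the other operations. A short case analysis on whether $s_a$ is of type~(a) (edge $(a,a+1)$ with $a$ initial) or type~(b) (vertex $a$ absent, edge $(k,a+1)$ with $k<a$) of Proposition \ref{prop:configurations} then shows that the required configuration survives: in case~(a), the polygon containing edge $(a,a+1)$ keeps $a$ as its minimum because added vertices must lie strictly between two consecutive vertices of their polygon; in case~(b), a parallel no-conflict argument shows that the vertex $k$ cannot be removed either, so an edge $(k',a+1)$ with $k'<a$ persists. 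Corollary \ref{cor:bruhat} then applies and completes the induction, while the Bruhat inequality $x_{L,R}<\psi(w)$ follows from the chain of length-strictly-decreasing cover relations produced along the way.
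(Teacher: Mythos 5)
Your argument is correct and follows essentially the same route as the paper's: an induction that peels off one reflection at a time, using Proposition \ref{prop:configurations} to check that the descent configuration at the chosen index survives the remaining vertex additions and removals so that Corollary \ref{cor:bruhat} applies, with Corollaries \ref{cor:bi} and \ref{cor:bruhat2} handling the ending-pair ambiguity (which you cleanly isolate up front via commutation and elementary flips, where the paper interleaves it with the induction). The only inaccuracy is in your case~(b): the vertex $k$ \emph{can} be removed (namely when $(k,a+1)$ is the longest edge of its polygon, $k$ is initial and $s_k\in L$), but the edge is then replaced by $(k+1,a+1)$ with $k+1<a$ because $a$ is not a vertex, so the required configuration persists anyway and the step goes through.
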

\begin{proof}
We argue by induction on $|L'\cup R'|$. If $|L'\cup R'|=0$ then $L=\emptyset=R$, in which case the claim is trivially true. If $L'\cup R'$ is a singleton, the result follows from Corollaries \ref{cor:bruhat}, \ref{cor:bi} and \ref{cor:bruhat2}. 

Now suppose that $|L'\cup R'|>1$ and let $s_j\in L'\cup R'$. We consider the case $s_j\in L'$, the other case being similar. Write $L''=L'\backslash\{s_j\}$. One can choose $(L'')'=L''$, $(R')'=R'$. Since $s_j\in L(w)$, it follows by Proposition \ref{prop:configurations} that in the representation of $\psi(w)$ by a disjoint union of polygons, we have one of the two following configurations: either $(j, j+1)$ is an edge of a polygon of $\psi(w)$ with $j$ initial, or $j$ is not a vertex of a polygon of $\psi(w)$ but one has a polygon of $\psi(w)$ with an edge $(k, j+1)$ where $k<j$. Now any transposition $s_m\in L''$ is is such that $|m-\ell|>1$ and $R'$ contains neither $s_j$ nor $s_{j-1}$. Using Proposition \ref{prop:configurations} again this implies that any of the two possible configurations are preserved when reducing from $\psi(w)$ to $y:=(\prod_{s\in L''} s)\psi(w)(\prod_{s\in R'} s)$. Indeed, the configuration with an edge $(j, j+1)$ is preserved and since $s_{j-1}\notin R'$ the only thing that can change the edge $(k, j+1)$ of the second configuration is in case we have an edge $(k, j+1)$ with $k<j-1$ and $s_k\in R'$. In that case the edge $(k, j+1)$ is replaced by an edge $(k+1, j+1)$ in $y$ and $y$ still has the second configuration since $k+1<j$. In particular, using the same Proposition, we get $s_j\in L(\varphi(y))$. Induction together with Corollary \ref{cor:bruhat} gives all the claims except that $x_{L,R}$ is independent of the choice of $(L',R')$. Hence let $(\tilde{L}', \tilde{R}')$ be another pair obtained by the process. We must have either $s_j\in \tilde{L}'$, or $s_j\in \tilde{R}'$, or $s_{j-1}\in \tilde{R}'$. Assume for example that $s_j\in\tilde{R}'$, the other cases being similar. Then both $(L'', R')$ and $(\tilde{L}', \tilde{R}'\backslash s_j)$ are both ending pairs for $(L\backslash s_j, R\backslash \{s_j, s_{j-1}\})$, hence by induction since $y$ is independent of the chosen ending pair we have $$y=\left(\prod_{s\in L''} s\right)\psi(w)\left(\prod_{s\in R'} s\right)=\left(\prod_{s\in \tilde{L}'} s\right)\psi(w)\left(\prod_{s\in \tilde{R}'\backslash s_j} s\right).$$

We already know that $s_j\in L(\varphi(y))$ and since we assumed here that $s_j\in \tilde{R}'$ we have in particular $s_j\in R$. Arguing as above using Proposition \ref{prop:configurations} we see that $s_j\in R(\varphi(y))$ in that case. Using Corollary \ref{cor:bi} we get that $$x_{L,R}=\left(\prod_{s\in L'} s\right)\psi(w)\left(\prod_{s\in R'} s\right)=s_jy=ys_j=\left(\prod_{s\in \tilde{L}'} s\right)\psi(w)\left(\prod_{s\in \tilde{R}'} s\right).$$
\end{proof}

\begin{nota}
For $w\in\mathcal{W}_f$, $x\in\mathcal{P}_c$, we set
$$\alpha_w(x):=\ell_{\mathcal{S}}(w)+\ell_{\mathcal{S}}(\psi(w))-\ell_{\mathcal{S}}(x)\in\mathbb{Z},$$
$$\beta_w(x):=\ell_{\mathcal{T}}(x)-\ell_{\mathcal{T}}(\psi(w))\in\mathbb{Z}.$$ 
\end{nota}
\begin{defn}
To each fully commutative element $w\in \mathcal{W}_f$, we will associate an element $X_w$ of $\mathrm{TL}_n$. Set $$Q_w:=\{~x_{L, R}~|~L\subset L(w), R\subset R(w)\}.$$We then define $X_w$ by its coefficients when expressed in Zinno's basis:
$$X_w:=\sum_{x\in Q_w} p_x^w Z_x,$$
where $p_x^w:=(-1)^{\alpha_w(x)} v^{\beta_w(x)}.$
\end{defn}
These elements will turn out to control a part of the base change matrix between $\{Z_x\}_{x\in\Pc}$ and $\{b_w\}_{w\in\Wf}$. 
\begin{rmq}\label{rmq:operation}
As a consequence of Proposition \ref{prop:cube}, one has $s Q_w=Q_w$ for any $s\in L(w)$ and $Q_w s=Q_w$ for any $s\in R(w)$. In particular, $|Q_w|$ is always a power of two and is at least two if $w\neq e$ since for any $w\in\mathcal{W}_f\backslash e$, $L(w)\cup R(w)\neq\emptyset$.
\end{rmq}
\begin{exple}
Let $w=s_1 s_4 s_3 s_2$. Then $\psi(w)=s_1 s_4 s_3 s_2 s_3 s_4$. we have $L(w)=\{s_1,s_4\}$, $R(w)=\{s_2\}$. There is only one possible choice here for the pair $(L',R')$ which is given by $L'=L(w)$, $R'=R(w)$ (the above described process is trivial here). We have (an edge represents a cover relation in the Bruhat order):

$$Q_w=~~~~~~~
\begin{array}{l}
{\xymatrix{
    s_1 s_4 s_3 s_2 s_3 s_4 \ar@{-}[rr] \ar@{-}[dd] \ar@{-}[dr] && s_1 s_2 s_4 s_3 s_4 \ar@{-}[dr] \ar@{-}[dd]  \\
    &  s_4 s_3 s_2 s_3 s_4 \ar@{-}[rr] \ar@{-}[dd] && s_2 s_4 s_3 s_4 \ar@{-}[dd] \\
    s_1 s_3 s_2 s_3 s_4 \ar@{-}[rr] \ar@{-}[dr] && s_1 s_2 s_3 s_4 \ar@{-}[rd] \\
    & s_3 s_2 s_3 s_4 \ar@{-}[rr] && s_2 s_3 s_4
  }}\end{array}$$
\end{exple}
\begin{exple}\label{exple:xsbs}
Let $s_i\in\mathcal{S}$. We have $L(s_i)=\{s_i\}=R(s_i)$, hence the two possible choices for the pair $(L',R')$ are given by $(\{s_i\}, \emptyset)$ and $(\emptyset, \{s_i\})$. We have $Q_{s_i}=\{e,s_i\}$ and
 $$X_{s_i}=p_{s_i}^{s_i} Z_{s_i}+p_e^{s_i}=-Z_{s_i}+v^{-1}=b_{i}=b_{s_i}.$$
In general for $w\in \Wf$ we have $X_w\neq b_w$.
\end{exple} 
\begin{prop}
The set $\{X_w\}_{w\in\mathcal{W}_f}$ is a basis of the Temperley-Lieb algebra. 
\end{prop}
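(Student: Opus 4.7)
The plan is to show that the change-of-basis matrix from the family $\{X_w\}_{w\in\mathcal{W}_f}$ to Zinno's basis $\{Z_x\}_{x\in\mathcal{P}_c}$ is upper triangular with invertible diagonal entries, once both sets are identified via the bijection $\psi:\mathcal{W}_f\to\mathcal{P}_c$ of Theorem \ref{thm:bijref} and ordered by any linear extension of the Bruhat order on $\mathcal{P}_c$. Since $\{Z_x\}_{x\in\mathcal{P}_c}$ is a basis by Theorem \ref{thm:z} and $|\mathcal{W}_f|=|\mathcal{P}_c|$, this will imply the claim.

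First I pin down the diagonal entry. Taking $L=R=\emptyset$ gives ending pair $(L',R')=(\emptyset,\emptyset)$, hence $x_{\emptyset,\emptyset}=\psi(w)\in Q_w$. A direct computation yields $\alpha_w(\psi(w))=\ell_{\mathcal{S}}(w)$ and $\beta_w(\psi(w))=0$, so the coefficient of $Z_{\psi(w)}$ in $X_w$ is $p_{\psi(w)}^w=(-1)^{\ell_{\mathcal{S}}(w)}\in\{\pm 1\}$, which is a unit in $\mathbb{Z}[v,v^{-1}]$.

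Second, I must show that every other $x\in Q_w$ satisfies $x<\psi(w)$ strictly in the Bruhat order. By Proposition \ref{prop:cube}, every $x_{L,R}\in Q_w$ lies in $\mathcal{P}_c$ and satisfies $x_{L,R}\leq\psi(w)$ with $\ell_{\mathcal{S}}(x_{L,R})=\ell_{\mathcal{S}}(\psi(w))-|L'\cup R'|$. So it suffices to check the combinatorial fact that $|L'\cup R'|\geq 1$ whenever $(L,R)\neq(\emptyset,\emptyset)$. This is the only non-routine step, but it is easy: in the reduction process defining $(L',R')$, a step of type ``$s\in L\cap R$'' never changes $|L\cup R|$, while a step of type ``$s_j\in L,\ s_{j-1}\in R$'' decreases $|L\cup R|$ by exactly one and requires at least two elements of $L\cup R$ to be present at the moment it is applied; hence $|L\cup R|$ cannot be driven to zero unless it was empty to begin with.

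Combining the two previous steps, if we order $\mathcal{W}_f$ by any linear extension of the Bruhat order pulled back from $\mathcal{P}_c$ via $\psi$, the matrix expressing each $X_w$ in the Zinno basis is upper triangular with diagonal entries $(-1)^{\ell_{\mathcal{S}}(w)}$. These diagonal entries are units of $\mathbb{Z}[v,v^{-1}]$, so the matrix is invertible and $\{X_w\}_{w\in\mathcal{W}_f}$ is a $\mathbb{Z}[v,v^{-1}]$-basis of $\mathrm{TL}_n$. Everything else is direct bookkeeping using Proposition \ref{prop:cube} and the definitions of $\alpha_w$, $\beta_w$.
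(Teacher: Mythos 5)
Your proof is correct and follows essentially the same route as the paper: both establish upper triangularity of the matrix expressing $\{X_w\}$ in the Zinno basis via Proposition \ref{prop:cube}, with the invertible diagonal entries $p_{\psi(w)}^w=(-1)^{\ell_{\mathcal{S}}(w)}$, and then invoke Theorem \ref{thm:z}. The only cosmetic difference is that the paper orders $\mathcal{P}_c$ by a linear extension of the $\ell_{\mathcal{S}}$-order rather than of the Bruhat order; both work since Proposition \ref{prop:cube} gives the strict length drop $\ell_{\mathcal{S}}(x_{L,R})=\ell_{\mathcal{S}}(\psi(w))-|L'\cup R'|$ for $x_{L,R}\neq\psi(w)$.
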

\begin{proof}
It suffices to order Zinno basis by any linear extension of the order induced by the length function $\ell_{\mathcal{S}}$ restricted to $\mathcal{P}_c$. One then orders the set $\{X_w\}_{w\in\mathcal{W}_f}$ by the order on $\mathcal{W}_f$ obtained as the image of the order we put on $\mathcal{P}_c$ under the bijection $\varphi$. Thanks to Proposition \ref{prop:cube}, one then gets an upper triangular matrix with the invertible coefficients $\{p_{\psi(w)}^w\}_{w\in\mathcal{W}_f}$ on the diagonal, passing from the basis $\{Z_x\}_{x\in \mathcal{P}_c}$ to the set $\{X_w\}_{w\in \mathcal{W}_f}$. Theorem \ref{thm:z} then concludes the proof. 
\end{proof}
\begin{rmq}\label{rmq:invertible}
To achieve triangularity (with invertibility of the diagonal coefficients) of the base change matrix between $\{b_w\}_{w\in\Wf}$ and $\{Z_x\}_{x\in\Pc}$, Zinno orders $\mathcal{P}_c$ by any linear extension of the order induced by the lengths of the lifts $\{\mathbf{m}_x\}_{x\in\mathcal{P}_c}$. Since the braid words $\{\mathbf{m}_x\}_{x\in\mathcal{P}_c}$ are obtained from $m_x$ by replacing each $s_i$ by $\bs_i^{\pm 1}$ and $m_x$ is a reduced word (see Lemma \ref{stdreduced}), it follows that the length of the braid word $\mathbf{m}_x$ is equal to $\ell_{\mathcal{S}}(x)$. Hence Zinno's order is the same order as the one we considered in the proof above. As a consequence, these orders also give triangularity of the base change matrix between $X_w$ and $b_w$, with invertible coefficients on the diagonal. 
\end{rmq}

\begin{lem}\label{lem:xcommeb}
Let $w\in \mathcal{W}_f$.
\begin{enumerate}
\item If $s\in L(w)$, then $b_s X_w=(v+v^{-1}) X_w,$
\item If $s\in L(w)$, then $X_w b_s=(v+v^{-1}) X_w.$
\end{enumerate}
\end{lem}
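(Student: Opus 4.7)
The plan is to rewrite the identity in Zinno's basis and then verify it pair-by-pair on $Q_w$. By Example \ref{exple:xsbs}, $b_s = v^{-1} - Z_s$, so statement (1) is equivalent to showing $Z_s X_w = -v X_w$.

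By Remark \ref{rmq:operation}, $sQ_w = Q_w$, so I can partition $Q_w = \bigsqcup \{x, sx\}$ into pairs, where in each pair I label the longer element $x$, so that $\ls(sx) = \ls(x) - 1$. Revisiting the proof of Corollary \ref{cor:bruhat} for $x \in \Pc$ with $\ls(sx) = \ls(x) - 1$, exactly one of the following geometric situations occurs: (Case~1) $(i, i+1)$ is an edge of a polygon of $x$ with $i$ initial, so that $s \leqt x$ in the dual Bruhat order and $\lt(sx) = \lt(x) - 1$; or (Case~2) $i$ is a fixed point of $x$ lying inside a polygon containing an edge $(k, i+1)$ with $k < i$, so that $s \leqt sx$ and $\lt(sx) = \lt(x) + 1$.

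The next ingredient is the behaviour of $\omega$ on products of simples: by the Garside property of $B_c^*$, if $y_1, y_2 \in \Pc$ satisfy $\lt(y_1 y_2) = \lt(y_1) + \lt(y_2)$ and $y_1 y_2 \in \Pc$, then $i_c(y_1) i_c(y_2) = i_c(y_1 y_2)$ in $B_c^*$, hence $Z_{y_1} Z_{y_2} = Z_{y_1 y_2}$ in $\mathrm{TL}_n$. Combined with the quadratic identity $Z_s^2 = 1 + (v^{-1} - v) Z_s$ (which follows directly from $b_s = v^{-1} - Z_s$ and $b_s^2 = (v + v^{-1}) b_s$), this will give in Case~1 the formulas $Z_s Z_{sx} = Z_x$ and $Z_s Z_x = Z_{sx} + (v^{-1} - v) Z_x$, and in Case~2 the formulas $Z_s Z_x = Z_{sx}$ and $Z_s Z_{sx} = Z_x + (v^{-1} - v) Z_{sx}$.

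Finally, reading off $p_{sx}^w = -v^{-1} p_x^w$ in Case~1 and $p_{sx}^w = -v p_x^w$ in Case~2 from the definitions of $\alpha_w$ and $\beta_w$, a direct check on each pair will yield $Z_s(p_x^w Z_x + p_{sx}^w Z_{sx}) = -v (p_x^w Z_x + p_{sx}^w Z_{sx})$. Summing over all pairs gives $Z_s X_w = -v X_w$, proving (1); part (2) follows by the symmetric argument on the right, using $R(w)$ and the right-multiplication analogue of Proposition \ref{prop:configurations}. The main obstacle will be to argue cleanly that the geometric dichotomy from Proposition \ref{prop:configurations} persists on all of $Q_w$ (and not only on $\psi(w)$); this can be done by recycling the configuration-preservation argument already used in the proof of Proposition \ref{prop:cube}, which tracks exactly how the two possible polygon pictures at the index $i$ are propagated along the reductions in $Q_w$.
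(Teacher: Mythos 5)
Your proposal is correct and follows essentially the same route as the paper: pair up $Q_w$ into couples $\{x,sx\}$ using Remark \ref{rmq:operation}, split according to whether $\ell_{\mathcal{T}}(sx)=\ell_{\mathcal{T}}(x)\mp 1$ (giving $Z_x=Z_sZ_{sx}$ or $Z_{sx}=Z_sZ_x$), read off the coefficient ratios $p^w_{sx}=-v^{\mp 1}p^w_x$ from $\alpha_w$ and $\beta_w$, and verify the identity on each pair. The ``main obstacle'' you flag at the end is not actually one: the only inputs needed are that $sx\in\mathcal{P}_c$ (which is exactly Remark \ref{rmq:operation}) and that left multiplication by a reflection always changes $\ell_{\mathcal{T}}$ by exactly one, so no propagation of the polygon configurations of Proposition \ref{prop:configurations} along $Q_w$ is required.
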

\begin{proof}
We only prove $(1)$, the proof of $(2)$ being similar. Let $x\in  Q_w$ such that $sx< x$. Since $s\in L(w)$ one has that $sx\in  Q_w\subset\mathcal{P}_c$ by Remark \ref{rmq:operation}. One has either $sx<_{\mathcal{T}} x$, in which case $Z_x=Z_s Z_{sx}$, or $x<_{\mathcal{T}} sx$, in which case $Z_x=Z_s^{-1} Z_{sx}$. Assume that $sx<_{\mathcal{T}} x$. One has $\ell_{\mathcal{T}}(sx)=\ell_{\mathcal{T}}(x)-1$ hence $p_x^w=-vp_{sx}^w$ so we get
\begin{eqnarray*}
b_s(p_{sx}^w Z_{sx}+p_x^w Z_x)&=&(v^{-1}-Z_s)(p_{sx}^w Z_{sx}+p_x^w Z_x)\\
&=&v^{-1} p_{sx}^w Z_{sx}-p_{sx}^w Z_s Z_{sx}+v^{-1} p_x^w Z_x-p_x^w Z_s^2 Z_{sx}\\
&=&v^{-1} p_{sx}^w Z_{sx}+2v^{-1} p_x^w Z_x-p_x^w(Z_s(v^{-1}-v)+1)Z_{sx}\\
&=&(v+v^{-1})(p_{sx}^w Z_{sx}+p_x^w Z_x).
\end{eqnarray*}
Now assume that $x<_{\mathcal{T}} sx$. One has $\ell_{\mathcal{T}}(sx)=\ell_{\mathcal{T}}(x)+1$ hence $p_x^w=-v^{-1} p_{sx}^w$ so we get 
\begin{eqnarray*}
b_s(p_{sx}^w Z_{sx}+p_x^w Z_x)&=&(v-Z_s^{-1})(p_{sx}^w Z_{sx}+p_x^w Z_x)\\
&=&v p_{sx}^w Z_{sx}-p_{sx}^w Z_{x}+v p_x^w Z_x-p_x^w Z_s^{-1} Z_{x}\\
&=&v p_{sx}^w Z_{sx}+2v p_x^w Z_x-p_x^w((v-v^{-1})+Z_s)Z_{x}\\
&=&(v+v^{-1})(p_{sx}^w Z_{sx}+p_x^w Z_x).
\end{eqnarray*}
Summing these equalities on all the couples $(sx, x)$ one gets the result.
\end{proof}

\section{Application: coefficients of the base change matrix between the diagram and Zinno bases}
In this section, we use the newly introduced basis $\{X_w\}_{w\in\mathcal{W}_f}$ to explicitly compute some coefficients of the base change matrix between the diagram and Zinno bases and give a necessary condition for the coefficients to be nonzero. 
\subsection{Zinno basis and Bruhat order}

\begin{lem}\label{lem:wx}
Let $x\in \mathcal{P}_c$. Let $Z_x=\sum_{w\in\mathcal{W}_f} \lambda_w b_w$ be the expansion of $Z_x$ in the diagram basis. If $\lambda_w\neq 0$, then $w< x$.
\end{lem}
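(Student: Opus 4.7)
The plan is to use the factorisation $\omega = \theta \circ \pi$ from Remark~\ref{rmq:two quotients}, analysing $\pi(i_c(x))$ in the Iwahori-Hecke algebra $\mathcal{H}$ first and then descending to $\mathrm{TL}_n$ via $\theta$. Since $Z_x = \omega(\mathbf{m}_x) = \theta(\pi(\mathbf{m}_x))$, the statement will follow from a support estimate for $\pi(\mathbf{m}_x)$ in the standard basis of $\mathcal{H}$, transported through the Kazhdan-Lusztig basis.

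First I would expand $\pi(\mathbf{m}_x)$ using $\pi(\bs_i^{\pm 1}) = v^{\pm 1} T_{s_i}^{\pm 1}$. Since $T_s^{-1} \in \mathbb{Z}[v,v^{-1}] T_s + \mathbb{Z}[v,v^{-1}] \cdot 1$ in $\mathcal{H}$, each of the $N := \ell_{\mathcal{S}}(x)$ letters of $\mathbf{m}_x$ contributes a factor lying in $\mathbb{Z}[v,v^{-1}] T_{s_{i_k}} + \mathbb{Z}[v,v^{-1}]$. Distributing the product,
\[
\pi(\mathbf{m}_x) = \sum_{J \subseteq \{1, \dots, N\}} \gamma_J \, T_{s_{i_{j_1}}} T_{s_{i_{j_2}}} \cdots T_{s_{i_{j_{|J|}}}},
\]
where $J = \{j_1 < \dots < j_{|J|}\}$ and $\gamma_J \in \mathbb{Z}[v,v^{-1}]$. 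By Lemma~\ref{stdreduced} the Coxeter sequence $(s_{i_{j_1}}, \dots, s_{i_{j_{|J|}}})$ is a subword of the reduced expression $m_x$ for $x$.

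The next step is a classical Hecke-algebra estimate: the product $T_{s_{k_1}} \cdots T_{s_{k_m}}$, expanded in $\{T_w\}_{w\in\mathcal{W}}$, has support contained in $\{T_w : w \leq s_{k_1} * \cdots * s_{k_m}\}$, where $*$ denotes the Demazure (or $0$-Hecke) product. This is proved by induction on $m$ using $T_w T_s = T_{ws}$ when $\ell_{\mathcal{S}}(ws) > \ell_{\mathcal{S}}(w)$, the expansion $T_w T_s \in \mathbb{Z}[v,v^{-1}] T_{ws} + \mathbb{Z}[v,v^{-1}] T_w$ in the opposite case, and the lifting property of the Bruhat order. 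Since every reduced subword of $(s_{i_{j_1}}, \dots, s_{i_{j_{|J|}}})$ is a reduced subword of $m_x$, the subword characterisation of the Bruhat order forces $s_{i_{j_1}} * \cdots * s_{i_{j_{|J|}}} < x$, so summing over $J$ yields
\[
\pi(\mathbf{m}_x) \in \sum_{w < x} \mathbb{Z}[v,v^{-1}] T_w.
\]

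Finally, since the change-of-basis matrix from $\{T_w\}$ to the Kazhdan-Lusztig basis $\{C_w\}$ is unitriangular with respect to the Bruhat order, the above inclusion translates to $\pi(\mathbf{m}_x) \in \sum_{z < x} \mathbb{Z}[v,v^{-1}] C_z$. Applying $\theta$ and the projection formula $\theta(C_z) = (-1)^{\ell_{\mathcal{S}}(z)} b_z$ for $z \in \mathcal{W}_f$ and $\theta(C_z) = 0$ otherwise (Remark~\ref{rmq:two quotients}), we conclude
\[
Z_x = \theta(\pi(\mathbf{m}_x)) \in \sum_{\substack{z \in \mathcal{W}_f \\ z < x}} \mathbb{Z}[v,v^{-1}] b_z,
\]
so $\lambda_w \neq 0$ forces $w < x$, as required. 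The main obstacle is the Demazure-product estimate on the $T$-basis support, which is classical but must be invoked or quickly proved by the short induction above; an essentially equivalent alternative, avoiding $\mathcal{H}$, is to expand $Z_x = \prod_k(v^{-\epsilon_k} - b_{i_k})$ directly into subword monomials, normalise each via Proposition~\ref{prop:jones}(3) as $(v+v^{-1})^{\ell_J} b_{y_J}$, and run the same induction to bound $y_J$ by the Demazure product of the subword.
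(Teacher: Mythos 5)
Your proof is correct, and it reaches the conclusion by a slightly different route than the paper. The paper works entirely inside $\mathrm{TL}_n$: it expands $Z_x=\omega(\mathbf{m}_x)$ as a product of factors $v^{\pm 1}-b_i$ into $2^{\ell_{\mathcal{S}}(x)}$ monomials $b_{i_1}\cdots b_{i_k}$ indexed by subwords of the reduced word $m_x$ (Lemma \ref{stdreduced}), normalises each monomial as $(v+v^{-1})^k b_w$ via Proposition \ref{prop:jones}(3), observes that the Temperley--Lieb relations produce a $w$ admitting a reduced expression that is a subword of $s_{i_1}\cdots s_{i_k}$, hence of $m_x$, and concludes by the subword characterisation of the Bruhat order. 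This is precisely the ``essentially equivalent alternative'' you sketch in your last sentence. Your main route instead lifts the computation to $\mathcal{H}$, bounds the $T$-basis support of each subword product by the Demazure product (which is $\leq x$ because it has a reduced expression that is a subword of $m_x$), and then descends through the unitriangularity of the Kazhdan--Lusztig basis and the projection $\theta(C_z)=(-1)^{\ell_{\mathcal{S}}(z)}b_z$. The two arguments rest on the same two pillars --- $m_x$ being reduced and the subword property of the Bruhat order --- but your Hecke-algebra detour has the merit of making fully precise the step the paper states somewhat informally, namely that reducing a non-reduced monomial $b_{i_1}\cdots b_{i_k}$ only produces elements below the Demazure product of the corresponding word; the cost is invoking the Kazhdan--Lusztig machinery, which the paper's direct argument avoids by citing Jones's normalisation instead.
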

\begin{proof}
Recall that $\mathbf{m}_x$ is a braid word whose length is equal to $\ell_{\mathcal{S}}(x)$ representing the simple element $i_c(x)$ in $\Bni$. If one replaces any $\mathbf{s}_i^{\pm}$ by $s_i$ in $\mathbf{m}_x$, one obtains the standard form $m_x$ of $x$, which is an $\mathcal{S}$-reduced expression of $x$. After being mapped to the Temperley-Lieb algebra, any letter $\mathbf{s}_i$ is replaced by $v^{-1}-b_i$ while each letter $\mathbf{s}_i^{-1}$ is replaced by $v-b_i$. Hence if we expand the image of $\mathbf{m}_x$ without reducing, we obtain $2^{\ell_{\mathcal{S}}(x)}$ different terms: for each $\mathbf{s}_i^{\pm 1}$ occuring in $\mathbf{m}_x$ we can either choose the $-b_i$ or the $v^{\pm 1}$. As a consequence, if we expand the image of $\mathbf{m}_x$ in $\mathrm{TL}_n$, we obtain a linear combination of elements of the form $b_{i_1} b_{i_2}\cdots b_{i_k}$ where the corresponding products $s_{i_1} s_{i_2}\cdots s_{i_k}$ are subwords of $m_x$. If such a product $s_{i_1} s_{i_2}\cdots s_{i_k}$ is not an $\mathcal{S}$-reduced expression of a fully commutative element, then $b_{i_1} b_{i_2}\cdots b_{i_k}$ is not a reduced word. Rather $b_{i_1} b_{i_2}\cdots b_{i_k}$ is equal to $(v+v^{-1})^k b_w$ for a unique pair $(k, w)\in\mathbb{Z}_{>0}\times\mathcal{W}_f$ by part $(3)$ of Proposition \ref{prop:jones} and as a consequence of the Temperley-Lieb relations, $w$ has an $\mathcal{S}$-reduced expression which is a subword of $s_{i_1} s_{i_2}\cdots s_{i_k}$. But $s_{i_1} s_{i_2}\cdots s_{i_k}$ was itself a subword of $m_x$. Since $m_x$ is an $\mathcal{S}$-reduced expression of $x$, it follows that $w< x$. 
\end{proof}

As mentioned in Remark \ref{rmq:invertible}, Zinno orders $\mathcal{P}_c$ by any linear extension of the order induced by $\ell_{\mathcal{S}}$. He then proves the following Theorem, which is rewritten here using our notations and Lemma \ref{lem:wx}:
\begin{thm}[{\cite[Theorem 5]{Z}}]\label{thm:zbis}
Let $x\in\mathcal{P}_{c}$, $w\in\Wf$ and assume $w< x$. If $w\neq\varphi(x)$, there exists an element $y\in\mathcal{P}_c$, $y\neq x$ such that $w<y$ and $y<x$.
\end{thm}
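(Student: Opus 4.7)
My plan is to take the candidate $y := \psi(w) \in \mathcal{P}_c$, where $\psi = \varphi^{-1}$ is the inverse bijection from Theorem \ref{thm:bijref}, and to verify three properties: (a) $y \neq x$, (b) $w \leq y$ in Bruhat order, and (c) $y \leq x$ in Bruhat order. Property (a) is immediate: since $\psi$ is a bijection, $\psi(w) = x$ would force $w = \varphi(x)$, contradicting our hypothesis. Together, (a)--(c) give an element of $\mathcal{P}_c$ strictly below $x$ and (at least weakly) above $w$, which is the conclusion sought (reading the statement as $w \leq y$ with $y < x$ strict).

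For (b), I would establish the general fact $w \leq \psi(w)$ by exhibiting the canonical reduced decomposition $m_w$ of $w$ (from Proposition \ref{prop:fullya}(3)) as a subword of the standard form $m_{\psi(w)}$ (Lemma \ref{stdreduced}). The key observation is that $J_w = D_{\psi(w)}$ and $I_w[1] = U_{\psi(w)}$, so each syllable $(s_{i_m} s_{i_m-1} \cdots s_{j_m})$ of $m_w$ matches exactly the ``left-descending half'' of a syllable $[j_m, i_m+1] = s_{i_m} s_{i_m-1}\cdots s_{j_m} \cdots s_{i_m}$ appearing in $m_{\psi(w)}$. Because polygons in $m_{\psi(w)}$ are ordered by maximal index, and the indices $i_1 < i_2 < \cdots < i_\ell$ of the syllables of $w$ increase accordingly, the chosen descending halves occur in the correct left-to-right order in $m_{\psi(w)}$, so they concatenate into $m_w$ as a subword.

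For (c), I would argue that $w \leq x$ implies $\psi(w) \leq x$ via the subword characterization of Bruhat order. Given a reduced expression of $x$ containing a reduced expression of $w$ as a subword, I take this expression to be $m_x$ and let $S$ be the position set of the subword for $w$. The plan is to show that $S$ can be enlarged to a position set $S'$ inside $m_x$ whose underlying word is a reduced expression for $\psi(w)$. The guiding principle: the syllabic decomposition of $m_{\psi(w)}$ is fully determined by $(J_w, I_w[1])$ and the matching of Proposition \ref{prop:terminalinitial}; each syllable of $m_{\psi(w)}$ can be located inside a syllable of $m_x$ by matching the common ``center'' $s_j$ coming from a $D$-vertex and appealing to the noncrossing property. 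The subword for $w$ already selects the descending half of each such syllable (by (b)), and the enclosing syllable in $m_x$ provides the ascending half.

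The main obstacle is step (c). The underlying combinatorial claim---that the polygon structure of $\psi(w)$ is always inscribed inside the polygon structure of $x$ whenever $w \leq x$---requires a careful case analysis of how the subword for $w$ threads through the syllables of $m_x$, combined with use of Corollary \ref{cor:caractij} to read off $J_w, I_w$ from any reduced expression of $w$. A fallback strategy is to proceed by induction on $\ell_{\mathcal{S}}(x) - \ell_{\mathcal{S}}(w)$: when this difference is zero, $w = x$ forces $x \in \mathcal{W}_f \cap \mathcal{P}_c$ with $\psi(x) = x$, and the claim is trivial; for the inductive step, one chooses a cover $w \lessdot w'$ with $w' \leq x$ and compares $\psi(w)$ with $\psi(w')$ using the descent-set description of Proposition \ref{prop:configurations} together with Proposition \ref{prop:cube}.
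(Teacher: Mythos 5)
The paper does not give its own proof of this statement: it is imported from Zinno (\cite[Theorem 5]{Z}), whose argument is a case analysis that produces, depending on the configuration, some $y$ realized as a subword of $m_x$ (see the remark following the theorem). Your proposal instead fixes the single candidate $y=\psi(w)$ and reduces everything to the claim $\psi(w)<x$. But that claim is precisely Proposition \ref{prop:refzin} and Corollary \ref{prop:wsousx} of the paper, which are \emph{deduced from} Theorem \ref{thm:zbis} via a minimality argument on $\ell_{\mathcal{S}}$. Your route is not formally circular, since you do not invoke those later results, but the step you label (c) --- and candidly call ``the main obstacle'' --- is exactly the hard content that the citation of Zinno is there to supply. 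Neither the ``guiding principle'' (inscribing the polygon structure of $\psi(w)$ into that of $x$) nor the fallback induction is actually carried out, and the base case of the latter (that $\psi(x)=x$ whenever $x\in\mathcal{W}_f\cap\mathcal{P}_c$) is itself asserted without proof. As it stands, the proposal reduces the theorem to a statement at least as strong, rather than proving it.

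There is also a concrete error in your step (b). The syllables $(s_{i_m}s_{i_m-1}\cdots s_{j_m})$ of the canonical form of $w$ do not match ``descending halves'' of syllables $[j_m,i_m+1]$ of $m_{\psi(w)}$: the edges of $\psi(w)$ are not the pairs $(j_m,i_m+1)$, but are produced by the nesting and matching procedure of Lemma \ref{lem:ancrage1} and Proposition \ref{prop:terminalinitial}. Take $x=(1,6)(2,3,5)$, so that $w=\varphi(x)=(s_2s_1)(s_4s_3s_2)(s_5s_4s_3)$ with $(J_w,I_w)=(\{1,2,3\},\{2,4,5\})$; the syllables of $m_x$ are $[2,3]$, $[3,5]$ and $[1,6]$ (in particular $[1,3]$ is not one of them), and concatenating their descending halves gives $s_2\cdot s_4s_3\cdot s_5s_4s_3s_2s_1$, which is not a reduced expression of $w$ --- its two occurrences of $s_3$ have no $s_2$ between them, violating condition $(4)$ of Proposition \ref{prop:fullya}. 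The inequality $w\leq\psi(w)$ you are after is nevertheless true, but the correct witnessing subword of $m_{\psi(w)}$ is the Zinno extraction $w_{\psi(w)}$, which is a reduced expression of $\varphi(\psi(w))=w$ by Proposition \ref{prop:lesmemes}; you should argue via that instead. Part (a) of your proposal is correct as stated.
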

\begin{rmq}
In fact, in \cite{Z} the last statement of the Theorem is that the braid length of $m_{\mathbf{y}}$ is smaller that the braid length of $\mathbf{m}_x$, equivalently that $\ell_{\mathcal{S}}(y)<\ell_{\mathcal{S}}(x)$, which is weaker than $y<x$. But in Zinno's proof, various cases are considered to prove the statement and in all them, the element $y$ which is built is a subword of $m_x$. 
\end{rmq}

\noindent Zinno then proves

\begin{prop}[{\cite{Z}}]\label{prop:zin}
Let $x\in\mathcal{P}_{c}$, $w\in\Wf$ and assume $w< x$. If $w\neq\varphi(x)$, then $\ell_{\mathcal{S}}(\psi(w))<\ell_{\mathcal{S}}(x)$.
\end{prop}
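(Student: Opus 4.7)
The plan is to establish the inequality by induction on $\ell_{\mathcal{S}}(x)$, with Theorem \ref{thm:zbis} serving as the main input. The base case $\ell_{\mathcal{S}}(x)=0$, i.e.\ $x=e$, is vacuous: any $w<e$ forces $w=e=\varphi(e)$, contradicting the hypothesis $w\neq\varphi(x)$.

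For the inductive step, assume the statement holds for all noncrossing partitions of Coxeter length strictly smaller than $\ell_{\mathcal{S}}(x)$. Given $w\in\mathcal{W}_f$ with $w<x$ and $w\neq\varphi(x)$, Theorem \ref{thm:zbis} produces an element $y\in\mathcal{P}_c$ with $y\neq x$, $w<y$, and $y<x$. Since the Bruhat order strictly increases $\mathcal{S}$-length, we have $\ell_{\mathcal{S}}(y)<\ell_{\mathcal{S}}(x)$, so $y$ is eligible for the inductive hypothesis.

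I would then split into two cases depending on whether $w$ is the image of $y$ under $\varphi$. If $w=\varphi(y)$, then the bijectivity of the map from Theorem \ref{thm:bijref} gives $\psi(w)=y$, and we conclude immediately via
\[
\ell_{\mathcal{S}}(\psi(w))=\ell_{\mathcal{S}}(y)<\ell_{\mathcal{S}}(x).
\]
Otherwise $w\neq\varphi(y)$, and the inductive hypothesis applied to the pair $(w,y)$ yields the chain
\[
\ell_{\mathcal{S}}(\psi(w))<\ell_{\mathcal{S}}(y)<\ell_{\mathcal{S}}(x),
\]
again as required.

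This is essentially a clean descent argument: the sequence $x=y^{(0)}>y^{(1)}>\cdots$ in $\mathcal{P}_c$ produced by iterating Theorem \ref{thm:zbis} must terminate, and by that theorem can terminate only when $w=\varphi(y^{(k)})$, at which moment $\psi(w)=y^{(k)}$ and its $\mathcal{S}$-length is bounded above by $\ell_{\mathcal{S}}(y^{(1)})<\ell_{\mathcal{S}}(x)$. The only conceptual subtlety is confirming that the element $y$ supplied by Theorem \ref{thm:zbis} really satisfies $y<x$ strictly in the Bruhat order, so that lengths do decrease at each step; this is however built directly into that theorem's conclusion (together with the fact recalled in Lemma \ref{lem:wx}'s proof that Bruhat comparability forces length comparability). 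Accordingly I do not expect any real obstacle, as the main combinatorial work has already been done in establishing Theorem \ref{thm:zbis}.
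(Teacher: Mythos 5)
Your proof is correct. The paper itself merely cites Zinno for this proposition, but your induction/descent via Theorem \ref{thm:zbis} is exactly the mechanism the paper uses immediately afterwards to prove the stronger Proposition \ref{prop:refzin} (choosing a length-minimal element of $Y_{w,x}$), so this is essentially the same approach --- indeed your argument, pushed one step further, also yields the refinement $\psi(w)<x$.
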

\begin{proof}
See \cite{Z}, proof of Theorem $6$.
\end{proof}
\noindent We refine Proposition \ref{prop:zin}:
\begin{prop}\label{prop:refzin}
Let $x\in\mathcal{P}_{c}$, $w\in \Wf$ and assume $w< x$. If $w\neq\varphi(x)$, then $\psi(w)< x$.
\end{prop}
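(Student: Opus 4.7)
The goal is to upgrade Proposition \ref{prop:zin}, which only gives the length inequality $\ell_{\mathcal{S}}(\psi(w)) < \ell_{\mathcal{S}}(x)$, into the full Bruhat inequality $\psi(w) < x$. The natural tool for this is Theorem \ref{thm:zbis}, which, under exactly our hypotheses $w < x$ and $w \neq \varphi(x)$, produces an intermediate noncrossing partition $y \in \mathcal{P}_c$ with $w < y < x$ and $y \neq x$. My plan is to iterate this construction via induction on $\ell_{\mathcal{S}}(x)$.

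The base cases are trivial: if $\ell_{\mathcal{S}}(x) = 0$ then $x = e$ and $w < x$ is impossible, while if $\ell_{\mathcal{S}}(x) = 1$ then $x = s_i$ and $w < x$ forces $w = e$, in which case $\psi(e) = e < s_i = x$. For the inductive step, I assume the statement holds for all $x' \in \mathcal{P}_c$ with $\ell_{\mathcal{S}}(x') < \ell_{\mathcal{S}}(x)$, and consider $w \in \mathcal{W}_f$ with $w < x$ and $w \neq \varphi(x)$. Applying Theorem \ref{thm:zbis} yields some $y \in \mathcal{P}_c$, $y \neq x$, with $w < y$ and $y < x$; in particular $\ell_{\mathcal{S}}(y) < \ell_{\mathcal{S}}(x)$.

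I now split into two cases. If $w = \varphi(y)$, i.e.\ $y = \psi(w)$, then the conclusion $\psi(w) = y < x$ is immediate. Otherwise $w < y$ and $w \neq \varphi(y)$, so the inductive hypothesis (applied with $y$ in place of $x$) gives $\psi(w) < y$, and transitivity of the Bruhat order combined with $y < x$ produces $\psi(w) < x$, as desired.

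The proof is essentially a clean iteration of Theorem \ref{thm:zbis}, so there is no major obstacle; the only point that needs care is verifying the hypotheses of that theorem at each recursive call, which is exactly what the case split on whether $w = \varphi(y)$ takes care of. In particular, termination is guaranteed because Bruhat-order descent strictly decreases $\ell_{\mathcal{S}}$, so the induction is well-founded.
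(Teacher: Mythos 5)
Your proof is correct and is essentially the paper's argument: both iterate Theorem \ref{thm:zbis} until the intermediate noncrossing partition equals $\psi(w)$. The paper merely packages the descent as a minimal-length-element argument on the set $Y_{w,x}=\{y\in\mathcal{P}_c \mid w<y<x,\ y\neq x\}$ instead of your induction on $\ell_{\mathcal{S}}(x)$, and the two formulations are interchangeable.
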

\begin{proof}
Consider the set $$Y_{w,x}:=\{y\in\mathcal{P}_c~|~w< y< x,~y\neq x\}.$$
By Theorem \ref{thm:zbis} we have $Y_{w,x}\neq\emptyset$. Let $y\in Y_{w,x}$ such that $\ls(y)$ is minimal. If $y\neq \psi(w)$, then $\varphi(y)\neq w$. Applying Theorem \ref{thm:zbis} again, we get that there exists $y'\in\mathcal{P}_c$ such that $w< y'< y$, $y\neq y'$, a contradiction to the minimality of $\ls(y)$. Hence $y=\psi(w)$. In other words, $Y_{w,x}$ has a unique element of minimal length which is equal to $\psi(w)$. \end{proof}

Combining Proposition \ref{prop:refzin} with the fact that $w_x=\varphi(x)$ is a subword of the standard form $m_x$ of $x$ we get:
\begin{cor}\label{prop:wsousx}
Let $w\in\mathcal{W}_f$, $x\in\mathcal{P}_c$ and assume that $w< x$. Then $$\psi(w)< x.$$
\end{cor}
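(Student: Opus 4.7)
The plan is a short case split on whether $w$ equals $\varphi(x)$ or not. The corollary is stated without the restriction $w \neq \varphi(x)$ that appears in Proposition \ref{prop:refzin}, so the only new thing to verify is the boundary case $w = \varphi(x)$.

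In the case $w \neq \varphi(x)$, Proposition \ref{prop:refzin} applies verbatim to the hypothesis $w < x$ and yields $\psi(w) < x$, which is exactly the desired conclusion. In the case $w = \varphi(x)$, we use that $\psi$ is the inverse bijection to $\varphi$ (Theorem \ref{thm:bijref}), so $\psi(w) = \psi(\varphi(x)) = x$, and the conclusion $\psi(w) < x$ reduces to $x < x$, which holds by reflexivity of the Bruhat order as formulated in this paper (cf.\ the equivalence with the subword characterization, where taking the full reduced expression as its own subword gives $x < x$).

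The only purpose of invoking the fact that $w_x = \varphi(x)$ is a subword of the standard form $m_x$ of $x$ is to confirm that the case $w = \varphi(x)$ is actually compatible with the hypothesis $w < x$ of the corollary: by the subword characterization of the Bruhat order, $\varphi(x)$ being (a reduced expression extracted as) a subword of the reduced expression $m_x$ of $x$ forces $\varphi(x) < x$, so the hypothesis admits $w = \varphi(x)$ rather than rendering this case vacuous. I do not anticipate any obstacle; the statement is essentially a bookkeeping consequence of Proposition \ref{prop:refzin} together with the bijective identification $\psi\circ\varphi = \mathrm{id}_{\mathcal{P}_c}$.
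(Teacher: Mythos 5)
Your proof is correct and is essentially the paper's own argument: the corollary is obtained by combining Proposition \ref{prop:refzin} (which handles $w\neq\varphi(x)$) with the observation that $w_x=\varphi(x)$ is a subword of the reduced expression $m_x$, the remaining case $w=\varphi(x)$ reducing to $\psi(w)=x\le x$. You are also right to note that $<$ must be read non-strictly here, as it is throughout the paper (e.g.\ in the subword characterization and in the later use of this corollary), which is exactly what makes the boundary case go through.
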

\begin{rmq}
The fact that the order hidden behind the Zinno basis is the Bruhat order on $\mathcal{P}_c$ is surprising fact since the natural order is $<_{\mathcal{T}}$, which is the restriction of the refinement order on partitions. In \cite{GobWil}, we give a characterization of the Bruhat order on $\mathcal{P}_c$ and prove that the poset $(\mathcal{P}_c, <)$ is a lattice, isomorphic to the lattice of order ideals in the root poset of type $A_n$, which is also isomorphic to the lattice of Dyck with containment order. \end{rmq}
\subsection{The new basis as an intermediate basis}
We now consider the linear expansion of an element $b_w$ of the diagram basis in terms of the basis $\{X_{w'}\}_{w'\in\Wf}$
$$b_w=\sum_{w'\in\mathcal{W}_f} q_{w'}^{w} X_{w'}$$
and we would like to understand for which $w'$ one can have $q_w^{w'}\neq 0$. To this end, we write the element $X_w$ in the diagram basis as
$$X_w=\sum_{y\in\mathcal{W}_f} r_y^w b_y.$$
\begin{nota}
To each fully commutative element $w\in\mathcal{W}_f$ we associate a subset $F_w\subset\mathcal{W}_f$ defined by
$$F_w=\{y\in\mathcal{W}_f~|~L(y)\supset L(w), R(y)\supset R(w)~\text{and }\psi(y)<\psi(w)\}.$$ 
\begin{rmq}\label{rmq:fw}
Obviously one has $w\in F_w$ and if $y\in F_w$, then $F_y\subset F_w$. The inclusion of these sets defines a new partial order on $\mathcal{W}_f$.
\end{rmq}
\end{nota}
\begin{prop}\label{prop:fw}
If $r_y^w\neq 0$, then $y\in F_w$.
\end{prop}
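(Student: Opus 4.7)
My plan is to impose three independent constraints on which $b_y$ can appear with nonzero coefficient in $X_w$, coming respectively from the definition of $X_w$ and the two parts of Lemma~\ref{lem:xcommeb}.

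First, starting from $X_w=\sum_{x\in Q_w} p_x^w Z_x$ and expanding each $Z_x$ in the diagram basis via Lemma~\ref{lem:wx}, any $y$ with $r_y^w\neq 0$ must satisfy $y\leq x$ in the Bruhat order for some $x\in Q_w$. Proposition~\ref{prop:cube} gives $x\leq \psi(w)$, so $y\leq \psi(w)$, and Corollary~\ref{prop:wsousx} yields $\psi(y)\leq \psi(w)$, which is the third defining condition of $F_w$.

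Next, for the descent inclusions $L(w)\subset L(y)$ and $R(w)\subset R(y)$, I exploit Lemma~\ref{lem:xcommeb}. For each $s\in L(w)$, the identity $b_s X_w=(v+v^{-1})X_w$ places $X_w$ in
\[
E_s=\{u\in \mathrm{TL}_n\mid b_s u=(v+v^{-1})u\}.
\]
The key claim is that $E_s$ has $\mathbb{Z}[v,v^{-1}]$-basis $\{b_y\mid s\in L(y)\}$. One inclusion is clear. For the other, the structural input is that for $y\in\Wf$ with $s\notin L(y)$ one has $b_s b_y=c_y b_{y'}$ for some scalar $c_y\in\mathbb{Z}[v,v^{-1}]$ and some $y'\in\Wf$ with $s\in L(y')$: the existence of $(c_y,y')$ is Proposition~\ref{prop:jones}(3), and applying $b_s$ on the left of both sides together with $b_s^2=(v+v^{-1})b_s$ forces $b_s b_{y'}=(v+v^{-1})b_{y'}$ (after cancelling the non-zero-divisor $c_y$), whence $s\in L(y')$. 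Writing $u=\sum_y\mu_y b_y\in E_s$ and extracting the coefficient of $b_{y^*}$ from $b_s u-(v+v^{-1})u=0$ for any $y^*$ with $s\notin L(y^*)$, all contributions $\mu_y b_s b_y$ with $y\neq y^*$ land on basis vectors $b_{y'}$ with $s\in L(y')$, hence $y'\neq y^*$, so the only surviving term is $-(v+v^{-1})\mu_{y^*}=0$, and $\mu_{y^*}=0$ follows since $v+v^{-1}$ is a non-zero-divisor in $\mathbb{Z}[v,v^{-1}]$. Intersecting over all $s\in L(w)$ yields $L(w)\subset L(y)$; the symmetric argument using Lemma~\ref{lem:xcommeb}(2) and right multiplication gives $R(w)\subset R(y)$.

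The main obstacle is the structural claim about $b_s b_y$ when $s\notin L(y)$; once it is in hand, the rest is a coefficient comparison in the linear identity provided by Lemma~\ref{lem:xcommeb}, and combining all three constraints delivers $y\in F_w$.
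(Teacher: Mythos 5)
Your overall architecture is sound, and on the third defining condition of $F_w$ you follow the paper exactly: $r_y^w\neq 0$ forces $y\leq x$ for some $x\in Q_w$ by Lemma \ref{lem:wx}, Proposition \ref{prop:cube} gives $x\leq\psi(w)$, and Corollary \ref{prop:wsousx} then gives $\psi(y)\leq\psi(w)$. For the descent inclusions you take a genuinely different route. The paper picks a $y$ of maximal length among those with $r_y^w\neq 0$, shows $s\in L(y)$ by analysing how $b_sb_y$ reduces, strips that term off and iterates; you instead characterize the whole eigenspace $E_s=\{u\mid b_su=(v+v^{-1})u\}$ as the span of $\{b_y\mid s\in L(y)\}$ and intersect over $s\in L(w)$. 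This is a clean reorganization: once the structural input is in hand, your coefficient extraction is a one-line linear-algebra argument and avoids the paper's term-by-term peeling. Both arguments, however, rest on the same key fact, namely that left multiplication by $b_s$ always produces a scalar multiple of a basis element $b_{y'}$ with $s\in L(y')$ (the paper invokes this as ``an element beginning by $b_{s_{i\pm1}}b_{s_i}$ obviously cannot come from an element $b_{s_i}b_{y''}$'').

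The one genuine problem is your justification of that structural claim. You correctly derive $b_sb_{y'}=(v+v^{-1})b_{y'}$ by hitting $b_sb_y=c_yb_{y'}$ with $b_s$ and cancelling $c_y$, but the step ``whence $s\in L(y')$'' is precisely the implication ``$b_sb_z=(v+v^{-1})b_z\Rightarrow s\in L(z)$'', which is the contrapositive of the claim you are in the middle of proving, applied to $y'$ in place of $y$: to rule out $s\notin L(y')$ you would need to know what $b_sb_{y'}$ looks like in that case. As written the argument is circular. The repair is the short case analysis that the paper's own proof carries out, say by induction on $\ell_{\mathcal{S}}(z)$: if $s=s_i\notin L(z)$ for $z\in\Wf$, then either $s_iz\in\Wf$ and $b_{s_i}b_z=b_{s_iz}$ with $s_i\in L(s_iz)$ and $s_iz\neq z$, or $s_iz\notin\Wf$, in which case $z$ has a reduced expression beginning with $s_{i\pm1}s_i$ (so $s_i\notin L(u)$ for the tail $u$ with $z=s_{i\pm1}s_iu$) and $b_{s_i}b_z=b_{s_i}b_{s_{i\pm1}}b_{s_i}b_u=b_{s_i}b_u$, to which the inductive hypothesis applies since $\ell_{\mathcal{S}}(u)<\ell_{\mathcal{S}}(z)$. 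In neither case is the output $(v+v^{-1})b_z$, and in both cases the output basis element has $s_i$ as a left descent. With this inserted, your eigenspace argument and hence the whole proof go through.
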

\begin{proof}
We first show that for any $y\in\Wf$ with $r_y^w\neq 0$ we have $s_i\in L(y)$ for any $s_i\in L(w)$. Thanks to Lemma \ref{lem:xcommeb} one has that 
$$b_{s_i}\underbrace{\left(\sum_{y\in\mathcal{W}_f} r_y^w b_y\right)}_{X_w}=(v+v^{-1})\left(\sum_{y\in\mathcal{W}_f} r_y^w b_y\right).$$
Among all the $y$ for which $r_y^w$ is nonzero, choose an element $y$ such that $\ell_{\mathcal{S}}(y)$ is maximal. It follows from this equality and the maximality of $\ell_{\mathcal{S}}(y)$ that in case $\ell_{\mathcal{S}}(s_iy)>\ell_{\mathcal{S}}(y)$, then $s_iy$ cannot be a fully commutative element. In other words, when reducing $b_{s_i} b_y$, one has to apply the relation $b_{s_i}^2=(v+v^{-1})b_{s_i}$ (in case $s_iy< y$) or the relation $b_{s_i} b_{s_{i\pm 1}} b_{s_i}=b_{s_i}$ (in case $s_iy>_{\mathcal{S}} y$). In the first case $y$ has an $\mathcal{S}$-reduced expression beginning with $s_i$, hence $s_i\in L(y)$ implying $b_{s_i} b_y=(v+v^{-1}) b_y$. In the second case $y$  has an $\mathcal{S}$-reduced expression beginning with $s_{i\pm 1}s_i$. But $b_y$ also appears in the right hand side and an element beginning by $b_{s_{i\pm 1}} b_{s_i}$ obviously cannot come from an element $b_{s_i} b_{y''}$ with $y''\in\mathcal{W}_f$. This gives a contradiction. Hence it follows that our element $y$ has an $\mathcal{S}$-reduced expression beginning with $s_i$, that it, $s_i$ lies in $L(y)$. We can then remove $b_{s_i} b_y=(v+v^{-1}) b_y$ from both sides of the equality above obtaining  
$$b_{s_i}\left(\sum_{z\in\mathcal{W}_f,z\neq y} r_z^w b_z\right)=(v+v^{-1})\left(\sum_{z\in\mathcal{W}_f, z\neq y} r_z^w b_z\right).$$
One can then choose another element $z$ with maximal length among the remaining ones with nonzero coefficient and apply the same argument to show that $s_i\in L(z)$ and so on until we run out of all the elements with nonzero coefficient. This proves that for any $s_i\in L(w)$, we have $s_i\in L(y)$ for any $y$ such that $r_y^w\neq 0$. Doing the same for any $s_j\in R(w)$ one gets that for any $y$ such that $r_y^w\neq 0$, $L(y)\supset L(w)$ and $R(y)\supset R(w)$. 

Now if $y$ is such that $r_y^w\neq 0$, one must have $y<x$ for at least one $x\in  Q_w$ by Lemma \ref{lem:wx}. Thanks to Proposition \ref{prop:wsousx}, we have $\psi(y)< x$ and thanks to Proposition \ref{prop:cube} one also has that $x< \psi(w)$ giving $\psi(y)< \psi(w)$. Therefore we have that $y\in F_w$.
\end{proof}
\begin{prop}\label{prop:coeffq}
If $q_{w'}^w\neq 0$, then $w'\in F_w$.
\end{prop}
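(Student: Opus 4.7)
The approach is triangular inversion. Proposition \ref{prop:fw} tells us that the change of basis matrix from $\{X_w\}_{w\in\mathcal{W}_f}$ to $\{b_w\}_{w\in\mathcal{W}_f}$, whose entries are the $r_y^w$, has its $w$-th column supported in $F_w$. The goal is to show that the inverse matrix enjoys the same support pattern. The key structural input is contained in Remark \ref{rmq:fw}: $w \in F_w$ and $F_y \subseteq F_w$ whenever $y \in F_w$. This monotonicity means that matrices with $F$-supported columns are closed under multiplication, and hence (provided the diagonal entries are invertible) closed under inversion.

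Concretely, I would fix any linear extension of the Bruhat order on $\mathcal{P}_c$ and transfer it to $\mathcal{W}_f$ via $\varphi$, then induct on $w$ with respect to this order. The base case $w = e$ is trivial: $b_e = 1 = X_e$ and $F_e = \{e\}$. For the inductive step, Proposition \ref{prop:fw} gives
\[X_w = r_w^w\, b_w + \sum_{\substack{y \in F_w \\ y \neq w}} r_y^w\, b_y,\]
with $r_w^w$ invertible in $\mathbb{Z}[v, v^{-1}]$ by Remark \ref{rmq:invertible}. Solving for $b_w$ yields
\[b_w = (r_w^w)^{-1} X_w - (r_w^w)^{-1} \sum_{\substack{y \in F_w \\ y \neq w}} r_y^w\, b_y.\]

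Each $y \in F_w \setminus \{w\}$ satisfies $\psi(y) < \psi(w)$ strictly in the Bruhat order, because $\psi$ is a bijection and so $\psi(y) \neq \psi(w)$ forces the weak inequality hidden in the definition of $F_w$ to be strict. The inductive hypothesis therefore applies to $b_y$, expressing it as a $\mathbb{Z}[v, v^{-1}]$-linear combination of elements $X_{w''}$ with $w'' \in F_y$. Substituting these expressions into the displayed formula and invoking $F_y \subseteq F_w$ from Remark \ref{rmq:fw}, every $X_{w''}$ appearing in the resulting expansion of $b_w$ has $w'' \in F_w$. This forces $q_{w'}^w = 0$ whenever $w' \notin F_w$, which is the claim.

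I do not foresee a substantive obstacle: Proposition \ref{prop:fw} together with Remark \ref{rmq:fw} set up exactly the algebraic data needed for a triangular-inversion induction to go through. The only point calling for some care is confirming that the leading coefficient $r_w^w$ is invertible in $\mathbb{Z}[v, v^{-1}]$ (not merely nonzero), which is provided by Remark \ref{rmq:invertible}; the rest is routine bookkeeping already encoded in the monotonicity of the family $\{F_w\}_{w\in\mathcal{W}_f}$.
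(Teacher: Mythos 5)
Your proof is correct and follows essentially the same route as the paper: a triangular-inversion induction using Proposition \ref{prop:fw}, the monotonicity $F_y\subseteq F_w$ from Remark \ref{rmq:fw}, and the invertibility of $r_w^w$ from Remark \ref{rmq:invertible}. The only cosmetic difference is the induction parameter (a linear extension of the Bruhat order rather than $\ell_{\mathcal{S}}(\psi(w))$), which changes nothing of substance.
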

\begin{proof}
We proceed by induction of $\ell_{\mathcal{S}}(\psi(w))$. If $\ell_{\mathcal{S}}(\psi(w))=1$ then $w$ is a simple transposition. In that case by Example \ref{exple:xsbs} one has $b_w=X_w$ and the claim is trivially true since $F_w=\{w\}$. Now suppose that $\ell_{\mathcal{S}}(\psi(w))>1$. Thanks to the previous Proposition we have that 
$$X_w=\sum_{y\in F_w} r_y^w b_y,$$
in particular, $\psi(y)< \psi(w)$, hence $\ell_{\mathcal{S}}(\psi(y))<\ell_{\mathcal{S}}(\psi(w))$ in case $w\neq y$, $y\in F_w$. Hence by induction one has that 
$$b_y=\sum_{z\in F_y} q_z^y X_z$$ which we replace in the previous equality:
$$X_w=r_w^w b_w+\sum_{y\in F_w,y\neq w} r_y^w \left(\sum_{z\in F_y} q_z^y X_z\right).$$
But since $y\in F_w$, one has that $F_y \subset F_w$ (see Remark \ref{rmq:fw}), hence the equality can be rewritten as
$$X_w=r_w^w b_w+\sum_{y\in F_w, y\neq w} \widetilde{q}_y^w X_y$$
for suitable polynomials $\widetilde{q}_y^w$. This concludes the proof since $r_w^w$ is invertible by Remark \ref{rmq:invertible}.
\end{proof}
\noindent Now write the expansion of an element $b_w$ in Zinno basis as
$$b_w=\sum_{x\in \mathcal{P}_c} h_x^w Z_x.$$
As an immediate consequence of Proposition \ref{prop:coeffq} we get:
\begin{cor}\label{cor:cdn}
If $x\notin\bigcup_{y\in F_w} Q_y$, then $h_x^w=0$.
\end{cor}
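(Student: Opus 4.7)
The proof is essentially a two-step chase through the expansions, combining Proposition~\ref{prop:coeffq} with the definition of $X_w$ in Zinno's basis.

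The plan is to start from the expansion
\[
b_w = \sum_{w'\in\mathcal{W}_f} q_{w'}^w X_{w'},
\]
and apply Proposition~\ref{prop:coeffq}: the only $w'$ for which $q_{w'}^w$ can be nonzero lie in $F_w$, so the sum collapses to
\[
b_w = \sum_{w'\in F_w} q_{w'}^w X_{w'}.
\]
Next I would replace each $X_{w'}$ by its expansion in the Zinno basis using the definition $X_{w'} = \sum_{x\in Q_{w'}} p_x^{w'} Z_x$, obtaining
\[
b_w = \sum_{w'\in F_w} \sum_{x\in Q_{w'}} q_{w'}^w p_x^{w'} Z_x.
\]

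Since every $Z_x$ appearing on the right-hand side satisfies $x \in Q_{w'}$ for some $w' \in F_w$, the support of this expansion is contained in $\bigcup_{y\in F_w} Q_y$. By linear independence of $\{Z_x\}_{x\in\mathcal{P}_c}$ (Theorem~\ref{thm:z}), comparing with $b_w = \sum_{x\in\mathcal{P}_c} h_x^w Z_x$ forces $h_x^w = 0$ whenever $x \notin \bigcup_{y\in F_w} Q_y$.

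There is no real obstacle here; the work was already done in Proposition~\ref{prop:coeffq}. The only thing to be slightly careful about is that the double sum may involve cancellations across different $w' \in F_w$, but this only affects which $h_x^w$ are zero for $x$ inside the union---outside the union, no $Z_x$ term is ever produced, so no cancellation is needed.
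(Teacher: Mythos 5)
Your argument is correct and is precisely the one the paper intends: the corollary is stated as an immediate consequence of Proposition~\ref{prop:coeffq}, obtained by restricting the expansion $b_w=\sum_{w'} q_{w'}^w X_{w'}$ to $w'\in F_w$ and then substituting the definition $X_{w'}=\sum_{x\in Q_{w'}} p_x^{w'} Z_x$. Your closing remark that cancellations can only occur inside the union is also exactly the right caveat, and nothing further is needed.
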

\begin{lem}\label{lem:longueurs}
Let $w\in\Wf$, $x:=\psi(w)\in\Pc$. Then $$\frac{\ell_{\mathcal{S}}(x)-\ell_{\mathcal{T}}(x)}{2}=\ell_{\mathcal{S}}(w)-\ell_{\mathcal{T}}(x).$$
\end{lem}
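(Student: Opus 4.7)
The plan is to rewrite the identity as $\ell_{\mathcal{S}}(x)+\ell_{\mathcal{T}}(x)=2\ell_{\mathcal{S}}(w)$ and then compute both sides by decomposing $x$ into its polygons. Write $\Pol(x)=\{P_1,\dots,P_r\}$ with $P_i=[i_1^{(i)}<i_2^{(i)}<\dots<i_{k_i}^{(i)}]$, and let $y_i=(i_1^{(i)},\dots,i_{k_i}^{(i)})$ be the associated cycles, so that $x=y_1y_2\cdots y_r$.

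First I would record the two length contributions on the $\mathcal{P}_c$ side. On one hand $\ell_{\mathcal{T}}(x)=\sum_{i=1}^r(k_i-1)$, since each $y_i$ is a $k_i$-cycle. On the other hand, the computation already carried out in the proof of Lemma \ref{stdreduced} gives
\[
\ell_{\mathcal{S}}(y_i)=\sum_{j=1}^{k_i-1}\bigl(2(i_{j+1}^{(i)}-i_j^{(i)})-1\bigr)=2(\max P_i-\min P_i)-(k_i-1),
\]
and by the same lemma $\ell_{\mathcal{S}}(x)=\sum_{i=1}^r \ell_{\mathcal{S}}(y_i)$. Summing and cancelling gives
\[
\ell_{\mathcal{S}}(x)+\ell_{\mathcal{T}}(x)=2\sum_{i=1}^r(\max P_i-\min P_i).
\]

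Next I would compute $\ell_{\mathcal{S}}(w)$ using the canonical decomposition of Proposition \ref{prop:fullya}(3) together with the characterization of $\varphi=\psi^{-1}$ in Theorem \ref{thm:bijref}. Writing $D_x=\{d_1<\dots<d_\ell\}$ and $U_x=\{u_1<\dots<u_\ell\}$, the theorem says $J_w=D_x$ and $I_w=U_x[-1]$, so $j_m=d_m$ and $i_m=u_m-1$ in point (3) of Proposition \ref{prop:fullya}. Hence
\[
\ell_{\mathcal{S}}(w)=\sum_{m=1}^\ell (i_m-j_m+1)=\sum_{m=1}^\ell (u_m-d_m).
\]

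The key combinatorial observation is then that within each polygon $P_i$ the non-initial vertices are $i_2^{(i)},\dots,i_{k_i}^{(i)}$ and the non-terminal vertices are $i_1^{(i)},\dots,i_{k_i-1}^{(i)}$, so
\[
\sum_{m=1}^\ell u_m-\sum_{m=1}^\ell d_m=\sum_{i=1}^r\bigl(i_{k_i}^{(i)}-i_1^{(i)}\bigr)=\sum_{i=1}^r(\max P_i-\min P_i).
\]
Combining this with the previous two displays yields $\ell_{\mathcal{S}}(x)+\ell_{\mathcal{T}}(x)=2\ell_{\mathcal{S}}(w)$, which is equivalent to the stated identity. There is no real obstacle here: the only ingredient beyond elementary bookkeeping is Lemma \ref{stdreduced} for $\ell_{\mathcal{S}}(x)$ and Theorem \ref{thm:bijref} to pass between $w$ and the sets $(D_x,U_x)$.
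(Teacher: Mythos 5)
Your proof is correct, but it takes a genuinely different route from the paper's. The paper proves the identity by analyzing Zinno's subword extraction: it observes that $\ell_{\mathcal{T}}(x)$ counts the syllables of $\mathbf{m}_x$, that every center contributes to $w_x$, and that each non-central index of a syllable occurs twice but contributes once, so that $\tfrac{1}{2}(\ell_{\mathcal{S}}(x)-\ell_{\mathcal{T}}(x))$ counts exactly the non-central contributions, i.e.\ $\ell_{\mathcal{S}}(w)-\ell_{\mathcal{T}}(x)$. That argument leans on the extraction rules of Section \ref{sub:zinnob} and on the identification $a=\varphi$ from Proposition \ref{prop:lesmemes}. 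You instead bypass the algorithm entirely: you compute $\ell_{\mathcal{S}}(x)+\ell_{\mathcal{T}}(x)=2\sum_i(\max P_i-\min P_i)$ from the polygon decomposition via Lemma \ref{stdreduced}, and $\ell_{\mathcal{S}}(w)=\sum_m(u_m-d_m)$ from the canonical form of Proposition \ref{prop:fullya}(3) combined with the characterization $(J_w,I_w)=(D_x,U_x[-1])$ of Theorem \ref{thm:bijref}; the telescoping identity
\[
\sum_{u\in U_x}u-\sum_{d\in D_x}d=\sum_{i}\bigl(\max P_i-\min P_i\bigr)
\]
then closes the argument. All the individual steps check out (additivity of $\ell_{\mathcal{T}}$ over disjoint cycles, the formula $\ell_{\mathcal{S}}(y_i)=2(\max P_i-\min P_i)-(k_i-1)$ from the base case of Lemma \ref{stdreduced}, and the fact that both $J_w$ and $D_x$ are read off in increasing order so that $j_m=d_m$, $i_m=u_m-1$ termwise). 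What your version buys is self-containedness: it depends only on the $(D_x,U_x)$-description of the bijection and on elementary length bookkeeping, not on the correctness of the syllable-contribution rules; the price is that it is specific to the standard form and the chosen Coxeter element, whereas the paper's counting argument reads off directly from the structure of $\mathbf{m}_x$ and meshes with the surrounding computation of $k_w$ in Lemma \ref{lem:coeffdiag}.
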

\begin{proof}
Recall that there is a rule to read $w_{x}$ which is a reduced expression for $w$ as a subword of $m_x$ that we recalled in Example \ref{ex:zinn} and in the paragraphs above it. Notice that $\ell_{\mathcal{T}}(x)$ is the number of syllables (or centers) of $m_x$. The claimed equality holds since all the centers contribute to $w_x$. The left hand side of the above equality is equal to the number of all the contributions to $w$ different from the centers. Indeed, recall from Section \ref{sub:zinnob} that any syllable contributes any of its simple transpositions exactly once to $w$ and that if $s_i$ is not at the center, it occurs twice in the syllable.
\end{proof}

\begin{lem}[Zinno, \cite{Z}]\label{lem:coeffdiag}
Let $w\in\mathcal{W}_f$ and $x:=\psi(w)$. The coefficient of $b_w$ in the expansion of $Z_{x}$ in the diagram basis is equal to $$(-1)^{\ell_{\mathcal{S}}(w)} v^{-2 k_w+\ell_{\mathcal{S}}(w)-\ell_{\mathcal{T}}(x)},$$
where $k_w$ is the number of letters of $\mathbf{m}_x$ which have negative exponent and contribute to $w_x$.
\end{lem}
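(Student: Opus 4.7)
The plan is to expand $Z_x = \omega(\mathbf{m}_x)$ using $\omega(\bs_i) = v^{-1}-b_i$ and $\omega(\bs_i^{-1}) = v-b_i$, to identify the subword of $\mathbf{m}_x$ that contributes $b_w$, and to compute its coefficient.

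First, expanding the product $\omega(\mathbf{m}_x)$ yields a sum indexed by subsets $S$ of the $\ell_{\mathcal{S}}(x)$ positions of $\mathbf{m}_x$: positions in $S$ contribute their $-b_i$ factor while positions outside contribute $v^{\pm 1}$ according to the sign of the exponent. Each term has the form $(-1)^{|S|} v^{e(S)} b_{j_1(S)} \cdots b_{j_{|S|}(S)}$ and, by Proposition \ref{prop:jones}(3), collapses to $(v+v^{-1})^{\ell(S)} b_{y_S}$ for a unique pair $(y_S, \ell(S))\in \Wf\times \mathbb{Z}_{\geq 0}$. The coefficient of $b_w$ in $Z_x$ is the sum of the contributions of all $S$ such that $y_S = w$.

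Second, I would compute the contribution from $S = \mathbf{w}_x$, Zinno's distinguished subword. By Proposition \ref{prop:lesmemes}, the word $w_x$ obtained from $\mathbf{w}_x$ by deleting exponents is a reduced expression for $w = \varphi(x)$, so $y_S = w$ and $\ell(S) = 0$. The sign of this term is $(-1)^{\ell_{\mathcal{S}}(w)}$. For the power of $v$, each syllable $\bs_{i,k+1}$ has $k-i+1$ positive-exponent letters (one center and $k-i$ right-part letters) and $k-i$ negative-exponent letters; summing over the $\ell_{\mathcal{T}}(x)$ syllables and using Lemma \ref{stdreduced} that $m_x$ is reduced, the total numbers of positive and negative letters in $\mathbf{m}_x$ are $(\ell_{\mathcal{S}}(x)+\ell_{\mathcal{T}}(x))/2$ and $(\ell_{\mathcal{S}}(x)-\ell_{\mathcal{T}}(x))/2$ respectively. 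Since $\mathbf{w}_x$ contains $\ell_{\mathcal{S}}(w) - k_w$ positive and $k_w$ negative letters, the outside positions contribute $-1$ per remaining positive and $+1$ per remaining negative, giving a total exponent
\begin{equation*}
\left(\tfrac{\ell_{\mathcal{S}}(x)-\ell_{\mathcal{T}}(x)}{2}-k_w\right) - \left(\tfrac{\ell_{\mathcal{S}}(x)+\ell_{\mathcal{T}}(x)}{2}-\ell_{\mathcal{S}}(w)+k_w\right) = -2k_w+\ell_{\mathcal{S}}(w)-\ell_{\mathcal{T}}(x),
\end{equation*}
which reproduces the claimed monomial.

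The main obstacle is the third step: showing that no other subword $S$ of $\mathbf{m}_x$ contributes to $b_w$. For $|S|<\ell_{\mathcal{S}}(w)$ this is automatic from $\ell_{\mathcal{S}}(y_S)\leq|S|$. For $|S|=\ell_{\mathcal{S}}(w)$ with the corresponding product a reduced expression in $\Wf$, one must show $\mathbf{w}_x$ is the only subword of length $\ell_{\mathcal{S}}(w)$ that yields a reduced expression for $w=\varphi(x)$; this follows from the characterization of $\mathbf{w}_x$ via the sets $I_w$ and $J_w$ in terms of initial and terminal vertices of polygons of $x$ (proof of Proposition \ref{prop:lesmemes}), so any alternative position choice gives a different fully commutative element. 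For $|S|>\ell_{\mathcal{S}}(w)$, a Temperley-Lieb reduction ($b_i^2=(v+v^{-1})b_i$ or $b_j b_i b_j = b_j$) is required, and a case analysis tailored to the syllable structure of $\mathbf{m}_x$ is needed to rule out $y_S=w$; intuitively, adding any extra letter of $\mathbf{m}_x$ to $\mathbf{w}_x$ introduces a syllable-internal cancellation that forces some essential generator of $w$ to disappear, giving $y_S\neq w$. Carrying out this case analysis in full is where the bulk of the technical work lies.
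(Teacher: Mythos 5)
Your expansion of $\omega(\mathbf{m}_x)$ and the bookkeeping for the distinguished term are correct and follow essentially the same route as the paper: the sign $(-1)^{\ell_{\mathcal{S}}(w)}$ comes from the $\ell_{\mathcal{S}}(w)$ letters contributing a $-b_i$, and your count of positive versus negative letters per syllable yields the exponent $-2k_w+\ell_{\mathcal{S}}(w)-\ell_{\mathcal{T}}(x)$ directly, whereas the paper splits the non-contributing letters according to whether their index is a center of some syllable and then invokes Lemma \ref{lem:longueurs}; the two computations are equivalent, and yours is if anything slightly cleaner.

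The real issue is the step you yourself single out as the crux: that $\mathbf{w}_x$ is the \emph{only} subword among the $2^{\ell_{\mathcal{S}}(x)}$ terms whose product is proportional to $b_w$. The paper does not prove this either --- it cites it from the beginning of Section 6 of Zinno's article (the lemma is attributed to Zinno, and the paper only redoes the coefficient computation to adjust notation and conventions). Your sketch of this step is not yet a proof. For $|S|=\ell_{\mathcal{S}}(w)$, the fact that any reduced subword for $w$ must select $n_i(w)$ letters of each index $i$ does not by itself determine \emph{which} occurrences are selected, so more is needed than the $I_w$/$J_w$ characterization. For $|S|>\ell_{\mathcal{S}}(w)$, the subwords to be excluded are arbitrary subsets whose product reduces to $(v+v^{-1})^k b_w$; they need not contain $\mathbf{w}_x$, so the ``add an extra letter to $\mathbf{w}_x$'' picture does not cover all cases. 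If you replace this step by the citation to Zinno, your argument matches the paper's proof; as a self-contained argument it has a genuine gap precisely there.
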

\begin{proof}
The coefficient on the diagonal is explicitly computed by Zinno in \cite{Z} at the end of Section $6$. Since we have different notation and conventions we sketch a proof. 

Let $x=\psi(w)$. Recall that $Z_x$ is the image in $\mathrm{TL}_n$ of the element of $\Bni$ represented by the braid word $\mathbf{m}_x$. It is obtained by replacing each letter $\mathbf{s}_i$ in $\mathbf{m}_x$ by $v^{-1}-b_i$ and each letter $\mathbf{s}_i^{-1}$ by $v-b_i$. Hence if we expand without reducing, we obtain $2^{\ell_{\mathcal{S}}(x)}$ different terms: for each $\mathbf{s}_i^{\pm 1}$ occuring in $\mathbf{m}_x$ we can either choose the $-b_i$ or the $v^{\pm 1}$. Zinno proves at the beginning of Section $6$ of \cite{Z} that among the $2^{\ell_{\mathcal{S}}(x)}$ terms which are (possibly non reduced) words in the $b_i$ multiplied by a power of $v$, the term obtained by taking the $b_i$ from any $\mathbf{s}_i^{\pm 1}$ contributing to $w_x$ and taking the $v^{\pm 1}$ from any other $\mathbf{s}_i^{\pm 1}$ is the only term among the $2^{\ell_{\mathcal{S}}(x)}$ which is proportional to $b_w$. But its coefficient is easily computed. Indeed, each $b_i$ which is contributed is multiplied by $-1$, and since a $b_i$ is contributed exactly from the $\mathbf{s}_i^{\pm 1}$ in $\mathbf{m}_x$ contributing to $w_x$ and since moreover $w_x$ is an $\mathcal{S}$-reduced expression of $w$, this gives rise to a sign $(-1)^{\ell_{\mathcal{S}}(w)}$. Now each $\mathbf{s}_i^{\pm 1}$ not contributing to $w_x$ must contribute its $v^{\pm 1}$. For any $\mathbf{s}_i^{-1}$ contributing to $w_x$, there is an $\mathbf{s}_i\mapsto v^{-1}-b_i$ at its right which does not contribute, giving a coefficient $v^{-k_w}$. Now if a $\mathbf{s}_i$ contributes to $w_x$, it means that $\mathbf{s}_i$ is the center of a syllable. As a consequence all the $\mathbf{s}_i^{-1}$ do not contribute to $w_x$. We need to count them. The number of occurrences of all the various $\mathbf{s}_i^{\pm 1}$ with $s_i$ occuring at a center is given by $\ell_{\mathcal{S}}(x)-2 k_w$. We then need to subtract the centers and there are $\ell_{\mathcal{T}}(x)$ many of them. We then need to divide the result by two since we have here all the $\mathbf{s}_i^{\pm 1}$ such that the instance with positive exponent contribute with the centers removed, but any instance $\mathbf{s}_i^{\pm 1}$ of one of these comes with an instance of $\mathbf{s}_i^{\mp 1}$ in the same syllable since we removed the centers. Hence the power of $v$ we obtain from the $\mathbf{s}_i^{-1}$ not contributing to $w_x$ is equal to 
$$\frac{\ell_{\mathcal{S}}(x)-2 k_w-\ell_{\mathcal{T}}(x)}{2}$$
so the power of $v$ multiplying $b_w$ in the expansion equals
$$-k_w+\frac{\ell_{\mathcal{S}}(x)-2 k_w-\ell_{\mathcal{T}}(x)}{2}=-2 k_w+\ell_{\mathcal{S}}(w)-\ell_{\mathcal{T}}(x),$$
where the last equality follows from Lemma \ref{lem:longueurs}.
\end{proof}
\begin{prop}\label{thm:coeff}
Let $w\in\mathcal{W}_f$, $x\in Q_w$ and $b_w=\sum_{x\in\Pc} h_x^w Z_x$. Then $$h_x^w=(-1)^{\alpha_w(x)}v^{2 k_w+\ell_{\mathcal{T}}(x)-\ell_{\mathcal{S}}(w)},$$
where $k_w$ is the number of letters of $\mathbf{m}_{{\psi(w)}}$ which have negative exponent and contribute to $w_{{\psi(w)}}$.
\end{prop}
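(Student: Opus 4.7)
The plan is to combine Proposition~\ref{prop:coeffq} with the explicit diagonal coefficient of Lemma~\ref{lem:coeffdiag}. Writing $b_w = \sum_{w' \in F_w} q_{w'}^w X_{w'}$ and substituting the Zinno expansion of each $X_{w'}$, one obtains
\[h_x^w = \sum_{w'\in F_w,\ x\in Q_{w'}} q_{w'}^w\, p_x^{w'}.\]
The goal is to reduce this sum to the single term $w'=w$ for every $x \in Q_w$, so that $h_x^w = q_w^w\, p_x^w$, and then to evaluate $q_w^w$.

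I would first treat $x = \psi(w)$. If $\psi(w) \in Q_{w'}$ then Proposition~\ref{prop:cube} forces $\psi(w) \leq \psi(w')$, while $w' \in F_w$ gives $\psi(w')\leq\psi(w)$; bijectivity of $\psi$ then yields $w' = w$. Hence $h_{\psi(w)}^w = q_w^w\,(-1)^{\ell_{\mathcal{S}}(w)}$. On the other hand, by Remark~\ref{rmq:invertible} the base-change matrix from the Zinno basis to the diagram basis is upper triangular with invertible diagonal, so its inverse is upper triangular with reciprocal diagonal; this forces $h_{\psi(w)}^w$ to be the inverse of the coefficient of $b_w$ in $Z_{\psi(w)}$, which is computed in Lemma~\ref{lem:coeffdiag}. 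Combining the two expressions pins down
\[q_w^w = v^{\,2 k_w + \ell_{\mathcal{T}}(\psi(w)) - \ell_{\mathcal{S}}(w)}.\]

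For a general $x \in Q_w$ the natural next step is to establish $Q_w \cap Q_{w'} = \emptyset$ for every $w' \in F_w\setminus\{w\}$. Indeed, elements of $Q_w$ are obtained from the polygons of $\psi(w)$ by locally adding or removing the vertices dictated by $L(w)\cup R(w)$ (cf.\ Proposition~\ref{prop:configurations} and Proposition~\ref{prop:cube}), and the constraints $\psi(w')<\psi(w)$, $L(w')\supset L(w)$, $R(w')\supset R(w)$ defining $F_w$ should render the analogous modifications of the strictly smaller $\psi(w')$ incompatible with those describing $Q_w$. Granting this disjointness, only $w'=w$ contributes to $h_x^w$, so $h_x^w = q_w^w\, p_x^w$; expanding $p_x^w = (-1)^{\alpha_w(x)}v^{\beta_w(x)}$ and using $\beta_w(x)=\ell_{\mathcal{T}}(x)-\ell_{\mathcal{T}}(\psi(w))$ gives the claimed formula $(-1)^{\alpha_w(x)} v^{2k_w + \ell_{\mathcal{T}}(x) - \ell_{\mathcal{S}}(w)}$.

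The main obstacle is precisely the disjointness $Q_w \cap Q_{w'} = \emptyset$ for $w'\in F_w\setminus\{w\}$. Should the direct combinatorial argument prove delicate, a secondary route is to exploit the common eigenvector property: both $b_w$ and $X_w$ satisfy $b_s\cdot \bullet = (v+v^{-1})\bullet$ for $s\in L(w)$, and the analogous right-multiplication identity for $s\in R(w)$. Using the pairing of elements of $Q_w$ under these actions extracted from the proof of Lemma~\ref{lem:xcommeb}, one can propagate the identity $h_x^w/p_x^w = q_w^w$ inductively from the base case $x=\psi(w)$ to every $x\in Q_w$, thereby bypassing the explicit disjointness statement.
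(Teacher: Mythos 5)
Your overall strategy coincides with the paper's: reduce $h_x^w$ to $q_w^w\,p_x^w$ via the disjointness of the sets $Q_{w'}$, pin down $q_w^w$ by inverting the diagonal coefficient from Lemma~\ref{lem:coeffdiag}, and expand $p_x^w$. The computation of $q_w^w$ and the final formula are correct. However, the step you yourself flag as ``the main obstacle'' --- that $Q_w\cap Q_{w'}=\emptyset$ for every $w'\in F_w\setminus\{w\}$ --- is left as a vague geometric heuristic about polygon modifications, and that is precisely the one nontrivial input the proposition needs beyond bookkeeping. As written, the proof is incomplete: without the disjointness, the sum $h_x^w=\sum_{w'\in F_w,\ x\in Q_{w'}}q_{w'}^w\,p_x^{w'}$ could pick up contributions from other $w'$, and nothing you say rules this out for $x\neq\psi(w)$.

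The irony is that you already have all the ingredients: your argument for the case $x=\psi(w)$ is exactly the general argument once you invoke Remark~\ref{rmq:operation}. Suppose $x\in Q_w\cap Q_{w'}$ with $w'\in F_w$. Then $\psi(w)=\bigl(\prod_{s\in L'}s\bigr)\,x\,\bigl(\prod_{s\in R'}s\bigr)$ for some $L'\subset L(w)\subset L(w')$ and $R'\subset R(w)\subset R(w')$. Since $Q_{w'}$ is stable under left multiplication by elements of $L(w')$ and right multiplication by elements of $R(w')$ (Remark~\ref{rmq:operation}), it follows that $\psi(w)\in Q_{w'}$, and now your $x=\psi(w)$ reasoning (Proposition~\ref{prop:cube} gives $\psi(w)\leq\psi(w')$, membership in $F_w$ gives $\psi(w')\leq\psi(w)$, injectivity of $\psi$ gives $w'=w$) finishes the job. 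This is the paper's three-line argument. Your proposed fallback via the eigenvector property of Lemma~\ref{lem:xcommeb} is also not carried out and would need additional justification that the $b_s$-action decomposes the $Z$-expansion of $b_w$ into the two-dimensional blocks $\{Z_x,Z_{sx}\}$ in a way that actually forces the ratio $h_x^w/h_{sx}^w$; I would drop it in favor of the direct argument above.
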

\begin{proof}
This is a consequence of the fact that if $y\in F_w, y\neq w$, then $Q_w\cap Q_y=\emptyset$. Indeed, assume that $x\in Q_w\cap Q_y$. Then there exist two sets $L'\subset L(w)$, $R'\subset R(w)$ such that
$$\left(\prod_{s\in L'} s\right)x\left(\prod_{s\in R'} s\right)=\psi(w).$$
Since $L(y)\supset L(w)$ and $R(y)\supset R(w)$ and $x\in Q_y$, one also has using Remark \ref{rmq:operation} that 
$$\psi(w)=\left(\prod_{s\in L'} s\right)x\left(\prod_{s\in R'} s\right)\in Q_y.$$
But by definition of $F_w$, $\psi(y)<\psi(w)$, $\psi(w)\neq\psi(y)$. Since any element $z\in Q_y$ satisfies $z<\psi(y)$ it follows that $\psi(w)<\psi(y)<\psi(w)$, a contradiction.

As a consequence of this observation together with Corollary \ref{cor:cdn}, if one knows the coefficient of $Z_{\psi(w)}$ in the expansion of $b_w$, one knows the coefficient of any $Z_x$ for $x\in Q_w$ since the only element of the basis $\{X_w\}$ which can contribute elements $Z_x$ for $x\in Q_w$ is $X_w$. Using Lemma \ref{lem:coeffdiag} we have that the inverse coefficient of $b_w$ in the expansion of $Z_{\psi(w)}$ is equal to $(-1)^{\ell_{\mathcal{S}}(w)} v^{-2 k_w+\ell_{\mathcal{S}}(w)-\ell_{\mathcal{T}}(\psi(w))}$. Therefore since the base change matrix is upper triangular with invertible coefficient on the diagonal one has that the coefficient of $Z_{\psi(w)}$ in the expansion of $b_w$ is given by $$(-1)^{\ell_{\mathcal{S}}(w)} v^{2 k_w-\ell_{\mathcal{S}}(w)+\ell_{\mathcal{T}}(\psi(w))}.$$ Using the fact that 
$$b_w=\sum_{w'\in F_w} q_{w'}^w X_{w'}$$
and that any element $Z_x$ with $x\in Q_w$ is contributed exclusively by $X_w$, one has that
$$q_w^w p_{\psi(w)}^w=(-1)^{\ell_{\mathcal{S}}(w)} v^{2 k_w-\ell_{\mathcal{S}}(w)+\ell_{\mathcal{T}}(\psi(w))},$$
hence $q_w^w=v^{2 k_w-\ell_{\mathcal{S}}(w)+\ell_{\mathcal{T}}(\psi(w))}$ since $p_{\psi(w)}^w=(-1)^{\ell_{\mathcal{S}}(w)}$. Hence for any $x\in Q_w$ we obtain $$h_x^w=q_w^w p_x^w=(-1)^{\alpha_w(x)}v^{2 k_w+\ell_{\mathcal{T}}(x)-\ell_{\mathcal{S}}(w)},$$
as claimed.
\end{proof}
\noindent Putting Corollary \ref{cor:cdn} and Proposition \ref{thm:coeff} together we have:
\begin{thm}\label{thm:final}
Let $w\in\mathcal{W}_f$, $x\in\mathcal{P}_c$.
\begin{enumerate}
\item If $x\notin\bigcup_{y\in F_w} Q_y$, then $h_x^w=0$.
\item If $x\in Q_w$, then $h_x^w=(-1)^{\alpha_w(x)}v^{2 k_w+\ell_{\mathcal{T}}(x)-\ell_{\mathcal{S}}(w)}$.
\end{enumerate}
\end{thm}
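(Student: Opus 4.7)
The statement is a packaging of the two results just obtained: part (1) is exactly Corollary \ref{cor:cdn}, and part (2) is exactly Proposition \ref{thm:coeff}. The plan is therefore to write a very short proof that simply invokes these two results in sequence. No new computation is needed; what matters is making the logical dependencies transparent.

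For part (1), the route to record is the two-step passage through the intermediate basis $\{X_{w'}\}_{w'\in\Wf}$. Expanding $b_w=\sum_{w'\in\Wf} q_{w'}^w X_{w'}$, Proposition \ref{prop:coeffq} restricts the sum to $w'\in F_w$. Each $X_{w'}$ in turn involves only $Z_x$ for $x\in Q_{w'}$, by the very definition of $X_{w'}$. Composing the two expansions, the only indices $x$ that can arise in the linear combination $b_w=\sum_x h_x^w Z_x$ are those in $\bigcup_{y\in F_w} Q_y$, giving the vanishing in (1).

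For part (2), the point to recall from the proof of Proposition \ref{thm:coeff} is the disjointness $Q_w\cap Q_y=\emptyset$ for $y\in F_w$ with $y\neq w$: this follows because every element of $Q_y$ lies below $\psi(y)$ in the Bruhat order, while $\psi(y)<\psi(w)$ strictly by definition of $F_w$. Consequently, for $x\in Q_w$ the only summand contributing $Z_x$ to $b_w$ comes from $X_w$, so $h_x^w = q_w^w\,p_x^w$. The diagonal coefficient $q_w^w$ is pinned down by comparison with the inverse base change coefficient computed by Zinno (Lemma \ref{lem:coeffdiag}) together with upper triangularity, and substituting the definition $p_x^w=(-1)^{\alpha_w(x)}v^{\beta_w(x)}$ yields the claimed formula.

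The main obstacle has already been overcome upstream: Proposition \ref{prop:coeffq} relies on Lemma \ref{lem:xcommeb} (the behaviour of $X_w$ under left/right multiplication by $b_s$ for $s$ a descent) and Proposition \ref{prop:refzin} (compatibility of $\psi$ with the Bruhat order on $\mathcal{P}_c$); Proposition \ref{thm:coeff} relies on the disjointness observation above. Given those, the present theorem demands only a sentence assembling the two previous statements.
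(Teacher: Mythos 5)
Your proposal is correct and matches the paper exactly: the paper states this theorem as an immediate combination of Corollary \ref{cor:cdn} (part (1)) and Proposition \ref{thm:coeff} (part (2)), with no further argument, and your recap of the supporting ingredients (Proposition \ref{prop:coeffq}, the disjointness $Q_w\cap Q_y=\emptyset$ for $y\in F_w\setminus\{w\}$, and Lemma \ref{lem:coeffdiag}) is accurate.
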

\begin{rmq}
Theorem \ref{thm:final} explicitly gives the coefficient $h_x^w$ of the base change matrix between the Zinno and diagram bases except in case $x\in\bigcup_{y\in F_w,y\neq w} Q_y$. There are two main difficulties in that case: firstly, examples show that the condition of \ref{cor:cdn} is not sufficient, and secondly, one may have $y,y'\in F_w\backslash w$ such that $Q_y\cap Q_{y'}\neq\emptyset$, hence different sets $Q_y$, $Q_{y'}$ may contribute the same element $Z_x$ of the Zinno basis for $x\in Q_y\cap Q_{y'}$; the sum of the various contributions may be zero, but in some cases it is nonzero, giving rise to a coefficient $h_x^w$ which is not a monomial. It is the case for example, as computations with GAP \cite{gap} show, in type $A_4$ for $x=s_4s_3s_2s_1s_2s_3s_4=(1,5)$ and $w=c^{-1}=s_4s_3s_2s_1$ where $h_x^w$ is not a monomial.
\end{rmq}

\end{document}